\newcommand{\referenza}{}
 \newtheorem{thm}{Theorem}[section]
 \newtheorem*{thm*}{Theorem {\referenza}}
 \newtheorem{prop}[thm]{Proposition}
 \newtheorem{cor}[thm]{Corollary}
 \newtheorem*{cor*}{Corollary {\referenza}}
\theoremstyle{definition}
 \newtheorem{defi}[thm]{Definition}
 \newtheorem{rem}[thm]{Remark}
 \newtheorem{ex}[thm]{Example}
 \newtheorem{lem}[thm]{Lemma}
\newcommand{\K}{\mathbb{K}}
\newcommand{\N}{\mathbb{N}}
\newcommand{\Z}{\mathbb{Z}}
\newcommand{\R}{\mathbb{R}}
\newcommand{\C}{\mathbb{C}}
\newcommand{\st}{\;:\;}
\newcommand{\sspace}{{\cdot}}
\newcommand{\ssspace}{{\cdot\cdot}}
\newcommand{\del}{\partial}
\newcommand{\delbar}{\overline{\del}}
\newcommand{\g}{\mathfrak{g}}
\newcommand{\solvmfd}{\left. \Gamma \middle\backslash G \right.}
\renewcommand{\Im}{\mathsf{Im}}
\renewcommand{\Re}{\mathsf{Re}}
\DeclareMathOperator{\id}{id}
\DeclareMathOperator{\imm}{im}
\DeclareMathOperator{\de}{d}
\DeclareMathOperator{\esp}{e}
\DeclareMathOperator{\GL}{GL}
\title{Hodge theory for twisted differentials}
\author{Daniele Angella}
\address[Daniele Angella]{Istituto Nazionale di Alta Matematica}
\curraddr{Dipartimento di Matematica e Informatica\\
Universit\`{a} di Parma \\
Parco Area delle Scienze 53/A, 43124 \\
Parma, Italy}
\email{daniele.angella@math.unipr.it}
\author{Hisashi Kasuya}
\address[Hisashi Kasuya]{Department of Mathematics\\
Tokyo Institute of Technology\\
1-12- 1-H-7, O-okayama, Meguro\\
Tokyo 152-8551, Japan}
\email{khsc@ms.u-tokyo.ac.jp}
\email{kasuya@math.titech.ac.jp}
\keywords{twisted differential, local system, Dolbeault cohomology, Bott-Chern cohomology, Hodge decomposition, solvmanifolds, class $\mathcal{C}$ of Fujiki}
\thanks{The first author is granted with a research fellowship by Istituto Nazionale di Alta Matematica INdAM, and is supported by the Project PRIN ``Varietà reali e complesse: geometria, topologia e analisi armonica'', by the Project FIRB ``Geometria Differenziale e Teoria Geometrica delle Funzioni'', and by GNSAGA of INdAM. The second author is supported by the JSPS Grant-in-Aid for Research Activity start-up}
\subjclass[2010]{32C35; 53C30; 53C56; 58A14}
\begin{document}

\begin{abstract}
 We study cohomologies and Hodge theory for complex manifolds with twisted differentials. In particular, we get another cohomological obstruction for manifolds in class $\mathcal{C}$ of Fujiki. We give a Hodge-theoretical proof of the characterization of solvmanifolds in class $\mathcal{C}$ of Fujiki, first proven by D. Arapura.
\end{abstract}

\maketitle

\section*{Introduction}

One of the possible ways of generalizing K\"ahler structures is to consider Hermitian metrics being locally conformal to a K\"ahler metric; see, e.g., \cite{dragomir-ornea}.

More precisely, consider a complex manifold $(M,J)$. Suppose that it admits a $J$-Hermitian metric $g$ being locally conformal to a K\"ahler metric. That is, for every point $p\in M$, there exist an open neighbourhood $U\ni p$ in $M$ and a smooth function $f\in\mathcal{C}^\infty(U;\R)$ such that $g=\exp(-f)\,\tilde g$, where $\tilde g$ is K\"ahler. Consider the associated $(1,1)$-forms $\omega:=g(J\sspace, \ssspace)$ and $\tilde\omega:=\tilde g(J\sspace, \ssspace)=\exp(f)\,\omega$. Since $\tilde g$ is K\"ahler, it follows that $\omega$ satisfies
$$ \de \omega + \de f \wedge \omega \;=\; 0 \;. $$
In fact, by the Poincaré Lemma, the property of $g$ being locally conformal K\"ahler is characterized by the existence of a $\de$-closed $1$-form $\phi \in A^1(M)_\R$ such that
$$ \de_\phi \omega \;=\;0 \qquad \text{ where } \qquad \de_\phi \;:=\; \de + L_\phi \;, $$
where $L_\phi:=\phi\wedge\sspace$.

The cochain complex $(A^{\bullet}(M)_\R,\de_{\phi})$ can be considered as the de Rham complex with values in the topologically trivial flat bundle $M\times \R$ with the  connection form $\phi$. Hence it is determined by the character $\rho_{\phi} \colon \pi_{1}(M)\to \mathrm{GL}_{1}(\K)$ given by $\rho_\phi(\gamma)=\exp\left({\int_{\gamma}\phi}\right)$. In particular, it is determined by the cohomology class $[\phi]\in H^{1}(M;\R)$.

\medskip

On compact K\"ahler manifolds, the Hodge theory for local systems was developed by the theory of Higgs bundles, see \cite{Sim}.
In this note, we study cohomologies and Hodge theory for general complex manifolds $(M, J)$ with twisted differentials. More precisely, denote by $H_{BC}^{\bullet,\bullet}(M):=\frac{\ker\del\cap\ker\delbar}{\imm\del\delbar}$ the Bott-Chern cohomology of $(M, J)$; in particular, $H_{BC}^{1,0}(M)=\{\theta\in A^{1,0}(M)\vert \del\theta=\delbar\theta=0\}$. For $\theta_1,\theta_2\in H^{1,0}_{BC}(M)$, consider the bi-differential $\Z$-graded complex
$$ \left( A^{\bullet}(M)_{\C},\, \del_{(\theta_{1},\theta_{2})},\, \delbar_{(\theta_{1},\theta_{2})} \right) \;, $$
where
$$ \del_{(\theta_{1},\theta_{2})} \;:=\; \partial+L_{\theta_{2}}+L_{\overline{\theta_{1}}} \qquad \text{ and } \qquad \delbar_{(\theta_{1},\theta_{2})} \;:=\; \delbar -L_{\overline{\theta_{2}}}+L_{\theta_{1}} \;. $$

Several cohomologies can be defined, and the identity induces natural maps between them:
$$ \xymatrix{
 & H^{\bullet}\left(A^{\bullet}(M)_{\C};\, \del_{(\theta_{1},\theta_{2})},\, \delbar_{(\theta_{1},\theta_{2})};\, \del_{(\theta_{1},\theta_{2})}\delbar_{(\theta_{1},\theta_{2})}\right) \ar[ld]_{\iota_{BC,\del}} \ar[d]_{\iota_{BC,dR}} \ar[rd]^{\iota_{BC,\delbar}} \ar@/_5pc/[dd]_{\iota_{BC,A}} & \\
 H^{\bullet}\left(A^{\bullet}(M)_{\C};\, \del_{(\theta_{1},\theta_{2})}\right) \ar[rd]_{\iota_{\del,A}} & H^{\bullet}\left(A^{\bullet}(M)_{\C};\, \de_{\phi}\right) \ar[d]^{\iota_{dR,A}} & H^{\bullet}\left(A^{\bullet}(M)_{\C};\, \delbar_{(\theta_{1},\theta_{2})}\right) \ar[ld]^{\iota_{\delbar,A}} \\
 & H^{\bullet}\left(A^{\bullet}(M)_{\C};\, \del_{(\theta_{1},\theta_{2})}\delbar_{(\theta_{1},\theta_{2})};\, \del_{(\theta_{1},\theta_{2})},\, \delbar_{(\theta_{1},\theta_{2})}\right) &
 } $$

 Here, $H^{\bullet}\left(A^{\bullet}(M)_{\C};\, \de_{\phi}\right)$, $H^{\bullet}\left(A^{\bullet}(M)_{\C};\, \del_{(\theta_{1},\theta_{2})}\right)$, and $H^{\bullet}\left(A^{\bullet}(M)_{\C};\, \delbar_{(\theta_{1},\theta_{2})}\right)$ denote the cohomology of the corresponding complex, and, in the notation of \cite{deligne-griffiths-morgan-sullivan},
 \begin{eqnarray*}
 H^{\bullet}\left(A^{\bullet}(M)_{\C};\, \del_{(\theta_{1},\theta_{2})},\, \delbar_{(\theta_{1},\theta_{2})};\, \del_{(\theta_{1},\theta_{2})}\delbar_{(\theta_{1},\theta_{2})}\right) &:=& \frac{\ker \del_{(\theta_{1},\theta_{2})}\cap \ker \delbar_{(\theta_{1},\theta_{2})}}{\imm \del_{(\theta_{1},\theta_{2})}\delbar_{(\theta_{1},\theta_{2})}} \;, \\[5pt]
 H^{\bullet}\left(A^{\bullet}(M)_{\C};\, \del_{(\theta_{1},\theta_{2})}\delbar_{(\theta_{1},\theta_{2})};\, \del_{(\theta_{1},\theta_{2})},\, \delbar_{(\theta_{1},\theta_{2})}\right) &:=& \frac{\ker \del_{(\theta_{1},\theta_{2})}\delbar_{(\theta_{1},\theta_{2})}}{\imm \del_{(\theta_{1},\theta_{2})} + \imm \delbar_{(\theta_{1},\theta_{2})}} \;,
\end{eqnarray*}
 are the counterpart of Bott-Chern \cite{bott-chern} and Aeppli \cite{aeppli} cohomologies.

 \medskip
 
 The above maps are in general neither injective nor surjective: so they do not allow a direct comparison of the cohomologies.
 Hence we are especially interested in studying the following properties:
 \begin{itemize}
  \item $(M, J)$ is said to satisfy the {\em $\del_{(\theta_1,\theta_2)}\delbar_{(\theta_1,\theta_2)}$-Lemma} if the natural map $\iota_{BC,A}$ is injective;
  \item $(M, J)$ admits the {\em $\left(\theta_1, \theta_2\right)$-Hodge decomposition} if the natural maps $\iota_{BC,\del}$ and $\iota_{BC,dR}$ and $\iota_{BC,\delbar}$ are isomorphisms.
 \end{itemize}
 Admitting the $\left(\theta_1, \theta_2\right)$-Hodge decomposition is a stronger property than satisfying the $\del_{(\theta_1,\theta_2)}\delbar_{(\theta_1,\theta_2)}$-Lemma. For $(\theta_1,\theta_2)=(0,0)$, the above properties are in fact equivalent. See \cite{deligne-griffiths-morgan-sullivan} for a proof. If $\left( M ,\, J \right)$ admits K\"ahler metrics, then it satisfies the two properties for any $\theta_{1}, \theta_{2} \in H^{1,0}_{BC}(M)$.

\renewcommand{\referenza}{\ref{cor:kahler-deldelbar-hodge-tw}}
\begin{cor*}
 Let $\left( M ,\, J \right)$ be a compact complex manifold endowed with a K\"ahler metric.
 Take $\theta_{1}, \theta_{2} \in H^{1,0}_{BC}(M)$.
 Then $\left( M ,\, J \right)$ satisfies the $\del_{(\theta_1,\theta_2)}\delbar_{(\theta_1,\theta_2)}$-Lemma and admits the $(\theta_1,\theta_2)$-Hodge decomposition.
\end{cor*}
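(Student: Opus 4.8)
The plan is to prove both statements at once by establishing twisted analogues of the Kähler identities; granting these, the classical ``principle of the three Laplacians'' (as in \cite{deligne-griffiths-morgan-sullivan}) applies verbatim and yields simultaneously the $\del_{(\theta_1,\theta_2)}\delbar_{(\theta_1,\theta_2)}$-Lemma and the $(\theta_1,\theta_2)$-Hodge decomposition. First I would fix the Kähler metric, with its $L^2$-Hermitian product on $A^\bullet(M)_\C$, the formal adjoints $\del^*,\delbar^*$, the Lefschetz operator $L_\omega:=\omega\wedge\sspace$, and $\Lambda:=L_\omega^*$. Since wedging by a $1$-form is a zeroth-order operator, the commutators of $\Lambda$ with $L_{\theta_1}$ and $L_{\overline{\theta_2}}$ are purely algebraic and may be computed pointwise in normal coordinates.

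The main computation is to check that, for any $(1,0)$-form $\theta_1$ and any $(0,1)$-form $\overline{\theta_2}$,
\begin{align*}
[\Lambda, L_{\theta_1}] &= -i\,(L_{\overline{\theta_1}})^* \;, & [\Lambda, L_{\overline{\theta_2}}] &= i\,(L_{\theta_2})^* \;.
\end{align*}
These are algebraic and require no closedness assumption. Combining them with the classical Kähler identities $[\Lambda,\delbar]=-i\,\del^*$ and $[\Lambda,\del]=i\,\delbar^*$, the extra terms assemble exactly into the adjoints of the twisted operators --- this is precisely what the sign conventions in the definitions of $\del_{(\theta_1,\theta_2)}$ and $\delbar_{(\theta_1,\theta_2)}$ are built to make happen:
\begin{align*}
[\Lambda,\delbar_{(\theta_1,\theta_2)}] &= -i\,\del^* - i\,(L_{\theta_2})^* - i\,(L_{\overline{\theta_1}})^* = -i\,(\del_{(\theta_1,\theta_2)})^* \;,\\
[\Lambda,\del_{(\theta_1,\theta_2)}] &= i\,\delbar^* - i\,(L_{\overline{\theta_2}})^* + i\,(L_{\theta_1})^* = i\,(\delbar_{(\theta_1,\theta_2)})^* \;.
\end{align*}
These are the Kähler identities for the twisted differentials.

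From here the argument is a transcription of the untwisted Kähler case. The bidifferential relations $\del_{(\theta_1,\theta_2)}^2=\delbar_{(\theta_1,\theta_2)}^2=\del_{(\theta_1,\theta_2)}\delbar_{(\theta_1,\theta_2)}+\delbar_{(\theta_1,\theta_2)}\del_{(\theta_1,\theta_2)}=0$ hold because $\theta_1,\theta_2$ are $\del$- and $\delbar$-closed; together with the twisted Kähler identities they force the mixed anticommutators to vanish and give the equality of the three Laplacians
\begin{equation*}
\Delta_{\del_{(\theta_1,\theta_2)}} = \Delta_{\delbar_{(\theta_1,\theta_2)}} = \tfrac12\,\Delta_{\de_\phi} \;, \qquad \de_\phi:=\del_{(\theta_1,\theta_2)}+\delbar_{(\theta_1,\theta_2)} \;.
\end{equation*}
Each of these operators is a zeroth-order perturbation of the de Rham Laplacian, hence elliptic with the same principal symbol, so on the compact manifold $M$ every one of the complexes admits a Hodge decomposition with finite-dimensional harmonic space. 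Equality of the Laplacians makes the three harmonic spaces coincide, which is exactly the statement that $\iota_{BC,\del}$, $\iota_{BC,dR}$, $\iota_{BC,\delbar}$ are isomorphisms (the $(\theta_1,\theta_2)$-Hodge decomposition), and the formal $\del\delbar$-argument of \cite{deligne-griffiths-morgan-sullivan} then gives the $\del_{(\theta_1,\theta_2)}\delbar_{(\theta_1,\theta_2)}$-Lemma. I expect the only delicate point to be the verification of the algebraic commutators and the sign bookkeeping in the twisted Kähler identities; everything downstream is formal.
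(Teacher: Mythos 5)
Your twisted K\"ahler identities and the resulting equality of the three Laplacians $\Delta_{\de_\phi}=2\,\Delta_{\del_{(\theta_1,\theta_2)}}=2\,\Delta_{\delbar_{(\theta_1,\theta_2)}}$ are correct, with the same signs as the paper's (these are exactly the identities established before and inside Proposition \ref{prop:kahler-lapl}), and the ellipticity argument for these three second-order operators is fine. The genuine gap is the assertion that the coincidence of the three harmonic spaces ``is exactly the statement that $\iota_{BC,\del}$, $\iota_{BC,dR}$, $\iota_{BC,\delbar}$ are isomorphisms.'' It is not: the $(\theta_1,\theta_2)$-Hodge decomposition concerns maps \emph{from} the Bott--Chern-type group $H^{\bullet}\left(A^{\bullet}(M)_{\C};\, \del_{(\theta_{1},\theta_{2})},\, \delbar_{(\theta_{1},\theta_{2})};\, \del_{(\theta_{1},\theta_{2})}\delbar_{(\theta_{1},\theta_{2})}\right)$, and this group is controlled by none of the three Laplacians you compute. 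What the harmonic-space coincidence does give is \emph{surjectivity} of $\iota_{BC,\del}$, $\iota_{BC,\delbar}$, $\iota_{BC,dR}$ (a common harmonic representative lies in $\ker\del_{(\theta_1,\theta_2)}\cap\ker\delbar_{(\theta_1,\theta_2)}$, hence defines a Bott--Chern class); \emph{injectivity} of these maps is precisely a $\del_{(\theta_1,\theta_2)}\delbar_{(\theta_1,\theta_2)}$-Lemma-type statement, which at the point where you claim the Hodge decomposition you have not yet proved --- you propose to deduce the Lemma only afterwards. As written, the deduction is therefore gapped (indeed nearly circular): the Bott--Chern group never receives an argument.

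There are two ways to fill the hole. The paper's way is to do honest Hodge theory for the Bott--Chern and Aeppli groups: it invokes Schweitzer's \emph{fourth-order} self-adjoint elliptic operators $\Delta_{BC,\del_{(\theta_{1},\theta_{2})},\delbar_{(\theta_{1},\theta_{2})}}$ and $\Delta_{A,\del_{(\theta_{1},\theta_{2})},\delbar_{(\theta_{1},\theta_{2})}}$ (note these are not zeroth-order perturbations of the de Rham Laplacian, so your ellipticity remark does not cover them), together with the further K\"ahler identities of Proposition \ref{prop:kahler-lapl}, namely $\Delta_{BC}=\Delta_{\delbar}^2+\del^\ast\del+\delbar^\ast\delbar$ and $\Delta_{A}=\Delta_{\delbar}^2+\del\del^\ast+\delbar\delbar^\ast$ (all operators twisted); these force $\ker\Delta_{BC}=\ker\Delta_{A}=\ker\Delta_{\delbar}$, hence all five cohomologies have equal finite dimension (Corollary \ref{cor:hodge-dec}), the Lemma follows from the orthogonal decomposition of $\imm\Delta_{BC}$ (Theorem \ref{thm:kahler-deldelbar-tw}), and the isomorphisms follow from injectivity (via \cite[Lemma 5.15]{deligne-griffiths-morgan-sullivan}, cf.\ Lemma \ref{lemma:deldelbar-equiv-def}) plus equality of dimensions (Corollary \ref{thm:kahler-hodge-tw}). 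Alternatively, you can stay with your three second-order Laplacians, but you must reverse your order of deduction: first prove the $\del_{(\theta_1,\theta_2)}\delbar_{(\theta_1,\theta_2)}$-Lemma by the formal argument of \cite{deligne-griffiths-morgan-sullivan} (it uses only $\Delta_{\del_{(\theta_1,\theta_2)}}=\Delta_{\delbar_{(\theta_1,\theta_2)}}$, the vanishing of the mixed anticommutators, and Hodge decomposition for the second-order operators, so it does carry over to the twisted, non-double-complex setting); then injectivity of the maps $\iota_{BC,\del}$, $\iota_{BC,\delbar}$, $\iota_{BC,dR}$ follows from the Lemma, and surjectivity from harmonic representatives as above. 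Either route completes the proof; your proposal as it stands contains neither.
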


 \medskip

 In Example \ref{ex:nakamura}, we study an explicit example on the completely-solvable Nakamura manifold $X=\solvmfd$.
 It is known that $\solvmfd$ satisfies the $\del\delbar$-Lemma, see \cite[Example 2.17]{angella-kasuya-1}.
 For $\theta_1:=\frac{\de z_1}{2} \in H^{1,0}_{BC}(X)$ and $\theta_2:=0$, (see page \pageref{ex:nakamura} for notation,) it follows that $X$ does not admit the $(\theta_1,\theta_2)$-Hodge decomposition, see \cite[\S8]{kasuya-hyper-hodge}. Furthermore, we show that it does not satisfy the $\del_{(\theta_1,\theta_2)}\delbar_{(\theta_1,\theta_2)}$-Lemma.

\medskip

 In particular, we are interested in studying the behaviour of $\del_{(\theta_1,\theta_2)}\delbar_{(\theta_1,\theta_2)}$-Lemma and $\left(\theta_1, \theta_2\right)$-Hodge decomposition under modifications. We recall that a {\em modification} $\mu \colon \left( \tilde M ,\, \tilde J \right) \to \left( M ,\, J \right)$ is a holomorphic map between compact complex manifolds of the same dimension that yields a biholomorphism $\mu\lfloor_{\tilde M\backslash \mu^{-1}(S)} \colon \tilde M\backslash \mu^{-1}(S) \to  M\backslash S$ outside the preimage of an analytic subset $S \subset M$ of codimension greater than or equal to $1$. We prove the following results.

\renewcommand{\referenza}{\ref{thm:deldelbar-modification} and Theorem \ref{thm:hodgedec-modification}}
\begin{thm*}
 Let $\mu \colon \left( \tilde M ,\, \tilde J \right) \to \left( M ,\, J \right)$ be a proper modification of a compact complex manifold $\left( M,\, J \right)$.
 Take $\theta_{1}, \theta_{2} \in H^{1,0}_{BC}(M)$.
 \begin{itemize}
  \item If $\left( \tilde M ,\, \tilde J \right)$ satisfies the $\del_{(\mu^{\ast}\theta_{1},\mu^{\ast}\theta_{2})}\delbar_{(\mu^{\ast}\theta_{1},\mu^{\ast}\theta_{2})}$-Lemma, then $\left( M ,\, J \right)$ satisfies the $\del_{(\theta_{1},\theta_{2})}\delbar_{(\theta_{1},\theta_{2})}$-Lemma.
  \item If $\left( \tilde M ,\, \tilde J \right)$ satisfies the $\left(\mu^{\ast}\theta_{1},\mu^{\ast}\theta_{2}\right)$-Hodge decomposition, then $\left( M ,\, J \right)$ satisfies the $\left(\theta_{1},\theta_{2}\right)$-Hodge decomposition.
 \end{itemize}
\end{thm*}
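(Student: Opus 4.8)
The plan is to deduce both descent statements from two structural facts about the modification $\mu$: that the pullback $\mu^{\ast}$ is a morphism of the twisted bi-differential complexes, and that it admits a left inverse, a trace map $\mu_{\ast}$, which is again compatible with the twisted differentials, so that $\mu_{\ast}\mu^{\ast}=\id$ on each twisted cohomology. Write $H_{BC}(M)$, $H_{A}(M)$, $H_{\del}(M)$, $H_{\delbar}(M)$, $H_{dR}(M)$ for the five twisted cohomologies of $(M,J)$ attached to $(\theta_1,\theta_2)$, and similarly on $\tilde M$ for $(\mu^{\ast}\theta_1,\mu^{\ast}\theta_2)$. Since $\mu^{\ast}\de=\de\,\mu^{\ast}$, $\mu^{\ast}\del=\del\,\mu^{\ast}$, $\mu^{\ast}\delbar=\delbar\,\mu^{\ast}$ and $\mu^{\ast}\,L_{\theta}=L_{\mu^{\ast}\theta}\,\mu^{\ast}$, the pullback intertwines $\del_{(\theta_{1},\theta_{2})}$ with $\del_{(\mu^{\ast}\theta_{1},\mu^{\ast}\theta_{2})}$ and $\delbar_{(\theta_{1},\theta_{2})}$ with $\delbar_{(\mu^{\ast}\theta_{1},\mu^{\ast}\theta_{2})}$; here $\mu^{\ast}\theta_{i}\in H^{1,0}_{BC}(\tilde M)$ because pullback preserves $\del$- and $\delbar$-closedness. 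In particular $\mu^{\ast}$ commutes with every comparison map $\iota_{\ast,\ast}$ of the diagram, on both $M$ and $\tilde M$, and from $\mu_{\ast}\mu^{\ast}=\id$ it follows at once that $\mu^{\ast}$ is injective on each of the five twisted cohomologies while $\mu_{\ast}$ is surjective.

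First I would treat the $\del_{(\theta_{1},\theta_{2})}\delbar_{(\theta_{1},\theta_{2})}$-Lemma, which by definition amounts to injectivity of $\iota_{BC,A}$ on $M$. Granting injectivity of $\mu^{\ast}$ on $H_{BC}$, a one-line diagram chase suffices: if $\alpha\in H_{BC}(M)$ satisfies $\iota_{BC,A}(\alpha)=0$, then applying $\mu^{\ast}$ and using that it commutes with $\iota_{BC,A}$ gives $\iota_{BC,A}(\mu^{\ast}\alpha)=0$ on $\tilde M$; since $\tilde M$ satisfies the Lemma, $\iota_{BC,A}$ is injective there, so $\mu^{\ast}\alpha=0$, whence $\alpha=0$ by injectivity of $\mu^{\ast}$. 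Thus $\iota_{BC,A}$ is injective on $M$, as required.

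For the $(\theta_{1},\theta_{2})$-Hodge decomposition I must show that each of $\iota_{BC,\del}$, $\iota_{BC,dR}$, $\iota_{BC,\delbar}$ is an isomorphism on $M$, knowing it is one on $\tilde M$. Injectivity of each follows from exactly the same chase (injectivity of $\mu^{\ast}$ on $H_{BC}$ together with injectivity of the corresponding $\iota$ upstairs). For surjectivity I would use the trace: given a class $\beta$ downstairs, in $H_{dR}(M)$, $H_{\del}(M)$ or $H_{\delbar}(M)$, pull it back to $\tilde M$, invert the corresponding isomorphism there to obtain $\tilde\alpha\in H_{BC}(\tilde M)$, and push forward to $\mu_{\ast}\tilde\alpha\in H_{BC}(M)$; using $\mu_{\ast}\mu^{\ast}=\id$ and the fact that $\mu_{\ast}$ likewise commutes with the comparison maps, one finds $\iota(\mu_{\ast}\tilde\alpha)=\mu_{\ast}\,\iota(\tilde\alpha)=\mu_{\ast}\mu^{\ast}\beta=\beta$. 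Hence each comparison map is also onto, and the decomposition descends.

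The crux, and the step I expect to be the main obstacle, is the construction of the trace $\mu_{\ast}$ on twisted cohomology together with the identity $\mu_{\ast}\mu^{\ast}=\id$. I would realize it at the level of currents: the twisted complexes extend to currents, and since wedging by a fixed form and the identity-induced comparison maps are of order zero, the associated twisted Bott-Chern, Aeppli and Dolbeault Laplacians remain elliptic, so by regularity of harmonic representatives currents and smooth forms compute the same twisted cohomologies. The pushforward of currents commutes with $\de$, $\del$, $\delbar$, and the projection formula $\mu_{\ast}(\mu^{\ast}\theta\wedge T)=\theta\wedge\mu_{\ast}T$ makes $\mu_{\ast}$ a morphism of the twisted bi-differential complexes and guarantees its compatibility with every $\iota_{\ast,\ast}$. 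Finally $\mu_{\ast}\mu^{\ast}=\id$ because $\mu$ is a proper modification of degree one: it is a biholomorphism off an analytic set of positive codimension, so $\mu_{\ast}\mu^{\ast}\alpha=\alpha$ already as currents. Verifying this projection formula for the twisted operators and the current-versus-form comparison for the twisted fourth-order Laplacians is the technical heart of the argument; everything else is formal diagram chasing.
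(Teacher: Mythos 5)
Your proposal is correct and follows essentially the same route as the paper: the key lemma is the existence of a pushforward on currents satisfying the projection formula, combined with the elliptic-regularity argument showing that currents and smooth forms compute the same twisted cohomologies, which yields $\mu_{\ast}\mu^{\ast}=\id$ (the paper, following Wells, phrases this as $\mu_{\ast}T\mu^{\ast}=\ell\cdot T$ with $\ell$ the sheeting number, which is $1$ for a modification), hence injectivity of $\mu^{\ast}$ and surjectivity of $\mu_{\ast}$ on all five cohomologies. The descent statements then follow by the same diagram chases you describe, so the two arguments coincide in substance.
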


 From this, we get another cohomological obstruction for complex manifolds belonging to class $\mathcal{C}$ of Fujiki. We recall that a compact complex manifold $\left( M,\, J \right)$ is said to be in {\em class $\mathcal{C}$ of Fujiki} \cite{fujiki} if it admits a proper modification $\mu \colon \left( \tilde M ,\, \tilde J \right) \to \left( M ,\, J \right)$ with $\left( \tilde M ,\, \tilde J \right)$ admitting K\"ahler metrics.

\renewcommand{\referenza}{\ref{cor:classC-hodge}}
\begin{cor*}
 Let $\left( M ,\, J \right)$ be a compact complex manifold in class $\mathcal{C}$ of Fujiki.
 Take $\theta_{1}, \theta_{2} \in H^{1,0}_{BC}(M)$.
 Then $\left( M ,\, J \right)$ satisfies the $\del_{(\theta_{1},\theta_{2})}\delbar_{(\theta_{1},\theta_{2})}$-Lemma and admits the $\left(\theta_1,\theta_2\right)$-Hodge decomposition.
\end{cor*}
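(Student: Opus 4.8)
The plan is to deduce this corollary directly by concatenating the Kähler case (Corollary~\ref{cor:kahler-deldelbar-hodge-tw}) with the behaviour of the two properties under modifications (Theorems~\ref{thm:deldelbar-modification} and~\ref{thm:hodgedec-modification}). By definition of class $\mathcal{C}$ of Fujiki, there exists a proper modification $\mu \colon (\tilde M, \tilde J) \to (M, J)$ such that $(\tilde M, \tilde J)$ admits a Kähler metric. The strategy is therefore to transport the data $\theta_1,\theta_2$ to $\tilde M$, invoke the Kähler result there, and then descend along $\mu$ using the modification theorems, whose hypotheses are phrased precisely in terms of the pulled-back classes.

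First I would check that the pulled-back forms $\mu^{\ast}\theta_1, \mu^{\ast}\theta_2$ lie in $H^{1,0}_{BC}(\tilde M)$, so that the Kähler corollary is applicable to them. Since $\mu$ is holomorphic, it preserves the bidegree decomposition, whence $\mu^{\ast}\theta_i$ is again of type $(1,0)$; and since pullback commutes with both $\del$ and $\delbar$, the conditions $\del\theta_i = \delbar\theta_i = 0$ yield $\del(\mu^{\ast}\theta_i) = \delbar(\mu^{\ast}\theta_i) = 0$. Hence $\mu^{\ast}\theta_1, \mu^{\ast}\theta_2 \in H^{1,0}_{BC}(\tilde M)$, as required.

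Next, I would apply Corollary~\ref{cor:kahler-deldelbar-hodge-tw} to the compact Kähler manifold $(\tilde M, \tilde J)$ with the classes $\mu^{\ast}\theta_1, \mu^{\ast}\theta_2$. This gives that $(\tilde M, \tilde J)$ satisfies the $\del_{(\mu^{\ast}\theta_1, \mu^{\ast}\theta_2)}\delbar_{(\mu^{\ast}\theta_1, \mu^{\ast}\theta_2)}$-Lemma and admits the $(\mu^{\ast}\theta_1, \mu^{\ast}\theta_2)$-Hodge decomposition. Finally, feeding the first conclusion into Theorem~\ref{thm:deldelbar-modification} yields the $\del_{(\theta_1,\theta_2)}\delbar_{(\theta_1,\theta_2)}$-Lemma on $M$, and feeding the second into Theorem~\ref{thm:hodgedec-modification} yields the $(\theta_1,\theta_2)$-Hodge decomposition on $M$.

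Since all the genuine content is already packaged in the cited Kähler corollary and the two modification theorems, I do not expect a substantive obstacle: the argument is a formal composition of existing results. The only point requiring verification is the elementary compatibility of pullback with the Bott-Chern $(1,0)$-condition noted above, which is immediate from the holomorphy of $\mu$.
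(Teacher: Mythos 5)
Your proof is correct and is essentially identical to the paper's own argument: both pull back $\theta_1,\theta_2$ along the modification $\mu$, apply Corollary~\ref{cor:kahler-deldelbar-hodge-tw} on the K\"ahler manifold $(\tilde M,\tilde J)$, and descend via Theorems~\ref{thm:deldelbar-modification} and~\ref{thm:hodgedec-modification}. Your explicit check that $\mu^{\ast}\theta_1,\mu^{\ast}\theta_2\in H^{1,0}_{BC}(\tilde M)$ is a detail the paper leaves implicit (it is recorded earlier, where pullbacks of Bott--Chern classes are discussed), so nothing is missing.
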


The previous results can be adapted to compact complex orbifolds of global-quotient type, namely, quotients of compact complex manifolds by finite groups of biholomorphisms, see Theorem \ref{thm:mod-orbfd-glob} and Corollary \ref{cor:orbld-glob-class-c}.

\medskip

The second author studied in \cite{kasuya-hyper-hodge} the property of satisfying the $(\theta_1,\theta_2)$-Hodge decomposition for any $\theta_1,\theta_2\in H^{1,0}_{BC}(M)$. In \cite[Theorem 1.7]{kasuya-hyper-hodge}, he proved that a solvmanifold admitting hyper-strong-Hodge-decomposition admits a K\"ahler metric. Therefore, we get a more direct proof of the characterization of solvmanifolds in class $\mathcal{C}$ of Fujiki.

\renewcommand{\referenza}{\ref{thm:classC-solvmanifolds}}
\begin{thm*}[{see also \cite[Theorem 9]{arapura}, \cite[Theorem 1.1]{bharali-biswas-mj}}]
 Let $\left( M ,\, J \right)$ be a solvmanifold endowed with a complex structure. If $\left( M ,\, J \right)$ belongs to class $\mathcal{C}$ of Fujiki, then it admits a K\"ahler metric.
\end{thm*}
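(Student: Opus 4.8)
The plan is to recognize this statement as an immediate consequence of combining Corollary~\ref{cor:classC-hodge} with the solvmanifold characterization established by the second author in \cite[Theorem~1.7]{kasuya-hyper-hodge}. All the geometric content has in fact already been distilled into those two ingredients, so what remains here is essentially an assembly rather than a fresh argument.

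First I would invoke Corollary~\ref{cor:classC-hodge}. Since $(M,J)$ belongs to class $\mathcal{C}$ of Fujiki, by definition there is a proper modification $\mu\colon(\tilde M,\tilde J)\to(M,J)$ with $(\tilde M,\tilde J)$ admitting K\"ahler metrics. For every pair $\theta_1,\theta_2\in H^{1,0}_{BC}(M)$, the K\"ahler case (Corollary~\ref{cor:kahler-deldelbar-hodge-tw}) gives that $(\tilde M,\tilde J)$ admits the $(\mu^\ast\theta_1,\mu^\ast\theta_2)$-Hodge decomposition, and Theorem~\ref{thm:hodgedec-modification} lets this property descend along $\mu$ to $(M,J)$. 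Hence $(M,J)$ admits the $(\theta_1,\theta_2)$-Hodge decomposition for \emph{all} $\theta_1,\theta_2\in H^{1,0}_{BC}(M)$. Second, I would observe that admitting the $(\theta_1,\theta_2)$-Hodge decomposition for every choice of $\theta_1,\theta_2$ is exactly the hyper-strong-Hodge-decomposition hypothesis considered in \cite{kasuya-hyper-hodge}. As $(M,J)$ is a solvmanifold equipped with a complex structure, the characterization \cite[Theorem~1.7]{kasuya-hyper-hodge} applies directly and yields that $(M,J)$ admits a K\"ahler metric, completing the argument.

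The genuine difficulty does not reside in this assembly but upstream, in the two results we are entitled to assume: namely, establishing that the $(\theta_1,\theta_2)$-Hodge decomposition descends under proper modifications (Theorem~\ref{thm:hodgedec-modification}), and the structure-theoretic analysis of \cite{kasuya-hyper-hodge} that converts the full family of twisted Hodge decompositions into the existence of a K\"ahler form on the solvmanifold. The point of the present proof is precisely that, once these are in hand, one bypasses the algebraic-geometric route of \cite[Theorem~9]{arapura}: the twisted Hodge theory packaged in Corollary~\ref{cor:classC-hodge} already encodes all the cohomological constraints needed to force the solvmanifold to be K\"ahler.
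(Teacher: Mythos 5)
Your proposal is correct and matches the paper's own proof essentially verbatim: both deduce from Corollary~\ref{cor:classC-hodge} that $(M,J)$ admits the $(\theta_1,\theta_2)$-Hodge decomposition for all $\theta_1,\theta_2\in H^{1,0}_{BC}(M)$, identify this with the hyper-strong-Hodge-decomposition of \cite{kasuya-hyper-hodge}, and conclude via \cite[Theorem~1.7]{kasuya-hyper-hodge}. Your unpacking of Corollary~\ref{cor:classC-hodge} through Corollary~\ref{cor:kahler-deldelbar-hodge-tw} and Theorem~\ref{thm:hodgedec-modification} is exactly how the paper itself establishes that corollary, so there is no substantive difference in the argument.
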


This result was firstly proven by D. Arapura \cite{arapura},
by using the fact that ``the classes of fundamental groups of compact [manifolds in class $\mathcal{C}$ of Fujiki] and compact K\"ahler manifolds coincide'',
which is proven by Hironaka elimination of indeterminacies (see \cite[Lemma 2.1]{bharali-biswas-mj}).
But our proof does not rely on the Hironaka elimination of indeterminacies.

\bigskip

\noindent{\sl Acknowledgments.}
The first author is greatly indebted to Adriano Tomassini for his constant support and for many useful discussions.

\section{Twisted differentials and cohomologies on complex manifolds}

\subsection{Twisted differentials}

Let $(M,\, J)$ be a complex manifold of complex dimension $n$. For $\K \in \{ \R, \C \}$, denote by $A^{\bullet}(M)_{\K}$ the space of $\K$-valued differential forms on $M$. Consider the cochain complex $\left( A^\bullet(M)_\K,\, \de \right)$.

For $\phi\in A^{r}(M)_{\K}$, we define the operator
$$ L_{\phi}\colon A^{\bullet}(M)_{\K}\to A^{\bullet+r}(M)_{\K} \;, \qquad L_{\phi}(x) \;:=\; \phi\wedge x \;. $$
If $\phi$ is a $\de$-closed $1$-form, then the operator
$$ \de_{\phi} \;:=\; \de + L_{\phi} $$
satisfies $\de_{\phi}\circ \de_{\phi}=0$. Hence we have the cochain complex
$$ \left( A^{\bullet}(M)_{\K},\, \de_{\phi} \right) \;. $$
Note that $\de_\phi$ satisfies the following Leibniz rule:
$$ \text{ for any } \alpha\in A^{\bullet}(M)_\K \;, \quad \left[ \de_\phi, \, L_\alpha \right] \;=\; L_{\de\alpha} \;. $$

The cochain complex $(A^{\bullet}(M)_{\K},\de_{\phi})$ is considered as the de Rham complex with values in the topologically trivial flat bundle $M\times \K$ with the  connection form $\phi$.
Hence the structure of the cochain complex $(A^{\bullet}(M)_{\K},\de_{\phi})$ is determined by the character $\rho_{\phi} \colon \pi_{1}(M)\to \mathrm{GL}_{1}(\K)$ given by $\rho_\phi(\gamma)=\exp\left({\int_{\gamma}\phi}\right)$.
In particular, the cochain complex $(A^{\bullet}(M)_{\K},\de_{\phi})$ is determined by the cohomology class $[\phi]\in H^{1}(M;\K)$.

We consider the bi-grading $A^{\bullet}(M)_{\C}=A^{\bullet,\bullet}(M)$ and the decomposition $\de=\del+\delbar$.
Take $\theta_{1}, \theta_{2} \in H^{1,0}_{BC}(M)$.
Consider
$$ \del_{(\theta_{1},\theta_{2})} \;:=\; \partial+L_{\theta_{2}}+L_{\overline{\theta_{1}}} \qquad \text{ and } \qquad \delbar_{(\theta_{1},\theta_{2})} \;:=\; \delbar -L_{\overline{\theta_{2}}}+L_{\theta_{1}} \;. $$
We have
$$ \del_{(\theta_{1},\theta_{2})}\del_{(\theta_{1},\theta_{2})} \;=\; \delbar_{(\theta_{1},\theta_{2})}\delbar_{(\theta_{1},\theta_{2})} \;=\; \del_{(\theta_{1},\theta_{2})}\delbar_{(\theta_{1},\theta_{2})}+\delbar_{(\theta_{1},\theta_{2})}\del_{(\theta_{1},\theta_{2})} \;=\; 0 \;. $$
Therefore we have the bi-differential $\Z$-graded complex
$$ \left( A^{\bullet}(M)_{\C},\, \del_{(\theta_{1},\theta_{2})},\, \delbar_{(\theta_{1},\theta_{2})} \right) \;. $$
Note that $\del_{\left(\theta_{1},\theta_{2}\right)}$ and $\delbar_{\left(\theta_{1},\theta_{2}\right)}$ satisfy the following Leibniz rule:
$$ \text{ for any } \alpha\in A^{\bullet}(M)_\C \;, \quad \left[ \del_{\left( \theta_1, \theta_2 \right)}, \, L_\alpha \right] \;=\; L_{\del\alpha} \quad \text{ and } \quad \left[ \delbar_{\left( \theta_1, \theta_2 \right)}, \, L_\alpha \right] \;=\; L_{\delbar\alpha} \;. $$
Note also that the associated total cochain complex is
$$ \left( A^{\bullet}(M)_{\C},\, \de_{\left( \theta_1+\bar\theta_1 \right) + \left( \theta_2-\bar\theta_2 \right)} \right) \;. $$

\subsection{Hodge theory with twisted differentials}

Let $\left( M,\, J \right)$ be a compact complex manifold of complex dimension $n$. Take a $J$-Hermitian metric $g$ on $M$.
We consider the ($\R$-linear, possibly $\C$-anti-linear) Hodge-$*$-operator $\overline{\ast}\colon A^{\bullet}(M)_{\K}\to A^{2n-\bullet}(M)_{\K}$ associated to $g$.
Consider the inner product on $A^{\bullet}(M)_{\K}$ given by
$$ (x,y) \;:=\; \int_M x\wedge \overline{\ast}y \;. $$
Consider the adjoint operators $\de^{\ast}$, $\del^{\ast}$, and $\delbar^{\ast}$ of the operators $\de$, $\del$, and $\delbar$, respectively, with respect to $\left(\sspace,\ssspace\right)$.
Then one has
\[ \de^{\ast} \;=\; -\overline{\ast}\, \de\, \overline{\ast} \;,
\qquad \del^{\ast} \;=\; -\overline{\ast}\, \del\, \overline{\ast} \;,
\qquad \delbar^{\ast} \;=\; -\overline{\ast}\, \delbar\, \overline{\ast} \;. \]

For $\phi\in A^{r}(M)_{\K}$, consider the operator $L_{\phi}$ and define its adjoint operator with respect to $\left(\sspace,\ssspace\right)$:
$$ \Lambda_{\phi} \colon A^{\bullet}(M)_{\K}\to A^{\bullet-r}(M)_{\K}
 \quad \text{ given by } \quad
 \left( L_{\phi} \sspace, \, \ssspace \right) \;=\; \left( \sspace ,\, \Lambda_{\phi} \ssspace \right) \;. $$
For $x\in A^{|y|-r}(M)_{\K}$ and $y\in A^{|y|}(M)_{\K}$, we compute
\begin{eqnarray*}
\left( L_{\phi}x,\, y\right) &=& \int_M \phi\wedge x\wedge \overline{\ast}y \;=\; (-1)^{r\left(|y|-r\right)} \int_M x\wedge \overline{\ast}\, \overline{\ast}^{-1}\,\phi\wedge\overline{\ast}\,y \\[5pt]
&=& (-1)^{r\left(|y|-r\right)}\left(x, \, \overline{\ast}^{-1}\,L_{\phi}\,\overline{\ast}\,y \right) \;.
\end{eqnarray*}
Hence we get
$$ \left. \Lambda_{\phi} \right\lfloor_{A^{|y|}(M)_\K} \;=\; (-1)^{r(\vert y\vert-r)}\, \overline{\ast}^{-1}\, L_{\phi}\, \overline{\ast} \;. $$
In particular, since the real dimension of $M$ is even, when $\phi$ is a $1$-form, we have
$$ \Lambda_{\phi} \;=\; \overline{\ast}\, L_{\phi}\, \overline{\ast} \;. $$

For a $\de$-closed  $1$-form $\phi$, by considering the differential $\de_{\phi}=\de+\phi$, the adjoint operator $\de_{\phi}^{\ast}$ with respect to $\left(\sspace,\ssspace\right)$ is given by
\[ \de_{\phi}^{\ast} \;=\; \de^{\ast}+\Lambda_{\phi} \;=\; -\overline{\ast}\, \de\, \overline{\ast} + \overline{\ast} \, L_{\phi} \, \overline{\ast} \;=\; -\overline{\ast}\, \de_{-\phi} \, \overline{\ast} \;. \]
Analogously, for $\theta_1,\theta_2\in H^{1,0}_{BC}(M)$, the adjoint operators $\del_{\left(\theta_1,\theta_2\right)}^{\ast}$ and $\delbar_{\left(\theta_1,\theta_2\right)}^{\ast}$ of the operators $\del_{\left(\theta_1,\theta_2\right)}$ and $\delbar_{\left(\theta_1,\theta_2\right)}$, respectively, with respect to $\left(\sspace,\ssspace\right)$ are
$$ \del_{\left(\theta_1,\theta_2\right)}^{\ast} \;=\; \del^\ast + \Lambda_{\theta_2} + \Lambda_{\bar\theta_1} \;=\; - \bar\ast\,\del_{\left(-\theta_2,-\theta_1\right)}\,\bar\ast $$
and
$$ \delbar_{\left(\theta_1,\theta_2\right)}^{\ast} \;=\; \delbar^\ast - \Lambda_{\bar\theta_2} + \Lambda_{\theta_1} \;=\; - \bar\ast\,\delbar_{\left(-\theta_2,-\theta_1\right)}\,\bar\ast \;. $$

\medskip

Suppose $g$ is a K\"ahler metric, with associated K\"ahler form $\omega$.
Then we have the K\"ahler identities
\[ \Lambda_{\omega} \, \del - \del \, \Lambda_{\omega} \;=\; \sqrt{-1}\, \delbar^{\ast} \quad \text{ and } \quad \Lambda_{\omega} \, \delbar - \delbar \, \Lambda_{\omega} \;=\; -\sqrt{-1}\, \del^{\ast} \;. \]

For a  $(1,0)$-form $\theta$, as the local argument for the K\"ahler identities, see, e.g., \cite[Lemma 6.6]{Voi}, we have
\begin{eqnarray*}
\Lambda_{\omega} \, L_{\theta} - L_{\theta} \, \Lambda_{\omega} &=& \sqrt{-1}\, \Lambda_{-\bar \theta} \;=\; -\sqrt{-1}\,\Lambda_{\bar \theta} \;, \\[5pt]
\Lambda_{\omega}L_{\bar\theta}-L_{\bar\theta}\Lambda_{\omega} &=& -\sqrt{-1}\,\Lambda_{- \theta} \;=\; \sqrt{-1}\Lambda_{ \theta} \;.
\end{eqnarray*}
Hence, for $\theta_{1}, \theta_{2} \in H^{1,0}_{BC}(M)$, we have
\begin{eqnarray*} 
 \Lambda_{\omega}\del_{(\theta_{1},\theta_{2})}-\del_{(\theta_{1},\theta_{2})}\Lambda_{\omega} &=& \sqrt{-1}\, \delbar^{\ast}_{(\theta_{1},\theta_{2})} \;, \\[5pt]
 \Lambda_{\omega}\delbar_{(\theta_{1},\theta_{2})}-\delbar_{(\theta_{1},\theta_{2})}\Lambda_{\omega} &=& -\sqrt{-1}\, \del^{\ast}_{(\theta_{1},\theta_{2})} \;. 
\end{eqnarray*}

\subsection{Cohomologies with twisted differentials}

Let $(M,\, J)$ be a complex manifold; suppose also that $M$ is compact.
For $\theta_{1}, \theta_{2} \in H^{1,0}_{BC}(M)$ and $\phi:=\theta_{1}+\overline{\theta_{1}}+\theta_{2}-\overline{\theta_{2}}$,  we consider the (bi-)differential $\Z$-graded algebras
\[ \left( A^{\bullet}(M)_{\C},\, \de_\phi \right) \qquad \text{ and } \qquad \left( A^{\bullet}(M)_{\C},\, \del_{(\theta_{1},\theta_{2})},\, \delbar_{(\theta_{1},\theta_{2})} \right) \]
as above.
We consider the following cohomologies:
\begin{eqnarray*}
 H^{\bullet}\left(A^{\bullet}(M)_{\C};\, \de_{\phi}\right) &:=& \frac{\ker \de_{\phi}}{\imm \de_{\phi}} \;, \\[5pt]
 H^{\bullet}\left(A^{\bullet}(M)_{\C};\, \del_{(\theta_{1},\theta_{2})}\right) &:=& \frac{\ker \del_{(\theta_{1},\theta_{2})}}{\imm \del_{(\theta_{1},\theta_{2})}} \;, \\[5pt]
 H^{\bullet}\left(A^{\bullet}(M)_{\C};\, \delbar_{(\theta_{1},\theta_{2})}\right) &:=& \frac{\ker \delbar_{(\theta_{1},\theta_{2})}}{\imm \delbar_{(\theta_{1},\theta_{2})}} \;, \\[5pt]
 H^{\bullet}\left(A^{\bullet}(M)_{\C};\, \del_{(\theta_{1},\theta_{2})},\, \delbar_{(\theta_{1},\theta_{2})};\, \del_{(\theta_{1},\theta_{2})}\delbar_{(\theta_{1},\theta_{2})}\right) &:=& \frac{\ker \del_{(\theta_{1},\theta_{2})}\cap \ker \delbar_{(\theta_{1},\theta_{2})}}{\imm \del_{(\theta_{1},\theta_{2})}\delbar_{(\theta_{1},\theta_{2})}} \;, \\[5pt]
 H^{\bullet}\left(A^{\bullet}(M)_{\C};\, \del_{(\theta_{1},\theta_{2})}\delbar_{(\theta_{1},\theta_{2})};\, \del_{(\theta_{1},\theta_{2})},\, \delbar_{(\theta_{1},\theta_{2})}\right) &:=& \frac{\ker \del_{(\theta_{1},\theta_{2})}\delbar_{(\theta_{1},\theta_{2})}}{\imm \del_{(\theta_{1},\theta_{2})} + \imm \delbar_{(\theta_{1},\theta_{2})}} \;. \\[5pt]
\end{eqnarray*}

The identity induces natural maps
$$ \xymatrix{
 & H^{\bullet}\left(A^{\bullet}(M)_{\C};\, \del_{(\theta_{1},\theta_{2})},\, \delbar_{(\theta_{1},\theta_{2})};\, \del_{(\theta_{1},\theta_{2})}\delbar_{(\theta_{1},\theta_{2})}\right) \ar[ld] \ar[d] \ar[rd] & \\
 H^{\bullet}\left(A^{\bullet}(M)_{\C};\, \del_{(\theta_{1},\theta_{2})}\right) \ar[rd] & H^{\bullet}\left(A^{\bullet}(M)_{\C};\, \de_{\phi}\right) \ar[d] & H^{\bullet}\left(A^{\bullet}(M)_{\C};\, \delbar_{(\theta_{1},\theta_{2})}\right) \ar[ld] \\
 & H^{\bullet}\left(A^{\bullet}(M)_{\C};\, \del_{(\theta_{1},\theta_{2})}\delbar_{(\theta_{1},\theta_{2})};\, \del_{(\theta_{1},\theta_{2})},\, \delbar_{(\theta_{1},\theta_{2})}\right) &
 } $$
of $\Z$-graded vector spaces.

\medskip

By \cite[Theorem 2.4]{angella-tomassini-5}, and using the finite-dimensionality of $H^{\bullet}(A^{\bullet}(M)_{\C};\, \del_{(\theta_{1},\theta_{2})})$ and $H^{\bullet}(A^{\bullet}(M)_{\C};\, \delbar_{(\theta_{1},\theta_{2})})$, see the next subsection or \cite[page 22]{Sim}, we have the following inequality {\itshape à la} Fr\"olicher.

\begin{thm}
 Let $\left( M,\, J \right)$ be a compact complex manifold of complex dimension $n$.
 Take $\theta_{1}, \theta_{2} \in H^{1,0}_{BC}(M)$, and $\phi:=\theta_{1}+\overline{\theta_{1}}+\theta_{2}-\overline{\theta_{2}}$.
 Then the inequality
 \begin{eqnarray*}
 \lefteqn{\dim_\C H^{\bullet}(A^{\bullet}(M)_{\C};\, \del_{(\theta_{1},\theta_{2})},\, \delbar_{(\theta_{1},\theta_{2})};\, \del_{(\theta_{1},\theta_{2})}\delbar_{(\theta_{1},\theta_{2})})} \\[5pt]
 && + \dim_\C H^{\bullet}(A^{\bullet}(M)_{\C};\, \del_{(\theta_{1},\theta_{2})}\delbar_{(\theta_{1},\theta_{2})};\, \del_{(\theta_{1},\theta_{2})}, \delbar_{(\theta_{1},\theta_{2})}) \\[5pt]
 &\geq& \dim_\C H^{\bullet}(A^{\bullet}(M)_{\C};\, \del_{(\theta_{1},\theta_{2})}) + \dim_\C H^{\bullet}(A^{\bullet}(M)_{\C};\, \delbar_{(\theta_{1},\theta_{2})})
 \end{eqnarray*}
 holds.
 \end{thm}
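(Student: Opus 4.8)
The plan is to read the statement as the specialisation, to the pair $\left(\del_{(\theta_{1},\theta_{2})},\,\delbar_{(\theta_{1},\theta_{2})}\right)$, of a purely algebraic inequality valid for any bi-differential $\Z$-graded vector space with finite-dimensional cohomologies, namely \cite[Theorem 2.4]{angella-tomassini-5}. First I would record that, by the computations of the previous subsection, the two operators $D_{1}:=\del_{(\theta_{1},\theta_{2})}$ and $D_{2}:=\delbar_{(\theta_{1},\theta_{2})}$ are homogeneous of degree $+1$ and satisfy $D_{1}^{2}=D_{2}^{2}=D_{1}D_{2}+D_{2}D_{1}=0$; hence $\left(A^{\bullet}(M)_{\C},\,D_{1},\,D_{2}\right)$ is exactly such a structure, and the five cohomologies introduced above are nothing but its associated $D_{1}$-, $D_{2}$-, Bott-Chern-type, de Rham, and Aeppli-type cohomologies. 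In particular, the abstract inequality of \cite{angella-tomassini-5} translates verbatim into the desired one, \emph{provided} the finiteness hypothesis of that theorem holds.

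For the reader's convenience I would also recall the mechanism behind the algebraic inequality, without reproving it. Writing $Z_{i}:=\ker D_{i}$ and $B_{i}:=\imm D_{i}$, the inclusions $\ker D_{1}\cap\ker D_{2}\hookrightarrow\ker D_{i}$ together with the projections onto the Aeppli-type quotient $\ker D_{1}D_{2}\to \ker D_{1}D_{2}/(B_{1}+B_{2})$ organise the natural maps displayed above into several Varouchas-type short exact sequences connecting the four cohomologies appearing in the statement. Additivity of dimension along these sequences, followed by cancellation of the common finite-dimensional terms, yields exactly the stated inequality; this is the homological content of \cite[Theorem 2.4]{angella-tomassini-5}, which I would simply invoke rather than redo.

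The one point requiring genuine analysis --- and the step I expect to be the real obstacle --- is verifying the finiteness hypothesis, that is, that $H^{\bullet}(A^{\bullet}(M)_{\C};\,\del_{(\theta_{1},\theta_{2})})$ and $H^{\bullet}(A^{\bullet}(M)_{\C};\,\delbar_{(\theta_{1},\theta_{2})})$ are finite-dimensional. Here I would fix a Hermitian metric on the compact manifold $M$ and pass to the twisted Laplacian $\del_{(\theta_{1},\theta_{2})}\del_{(\theta_{1},\theta_{2})}^{\ast}+\del_{(\theta_{1},\theta_{2})}^{\ast}\del_{(\theta_{1},\theta_{2})}$ and its $\delbar_{(\theta_{1},\theta_{2})}$-analogue. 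Since $\theta_{1},\theta_{2}$ are fixed forms, the multiplication operators $L_{\theta_{2}}, L_{\overline{\theta_{1}}}, L_{\theta_{1}}, L_{\overline{\theta_{2}}}$ are of order zero, so $D_{1}$ and $D_{2}$ differ from $\del$ and $\delbar$ by zeroth-order terms and therefore share their principal symbols; the two Laplacians are thus elliptic, and on the compact $M$ standard Hodge theory --- carried out in the next subsection, or quoted from \cite[page 22]{Sim} --- provides finite-dimensional spaces of harmonic representatives isomorphic to the two cohomologies. With finiteness secured, the inequality follows formally from \cite[Theorem 2.4]{angella-tomassini-5}.
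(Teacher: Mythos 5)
Your proposal is correct and takes essentially the same route as the paper: the paper also obtains the statement by applying \cite[Theorem 2.4]{angella-tomassini-5} to the bi-differential $\Z$-graded complex $\left( A^{\bullet}(M)_{\C},\, \del_{(\theta_{1},\theta_{2})},\, \delbar_{(\theta_{1},\theta_{2})} \right)$, with the finite-dimensionality of $H^{\bullet}(A^{\bullet}(M)_{\C};\, \del_{(\theta_{1},\theta_{2})})$ and $H^{\bullet}(A^{\bullet}(M)_{\C};\, \delbar_{(\theta_{1},\theta_{2})})$ supplied exactly as you argue, namely by ellipticity of the twisted Laplacians (whose principal parts coincide with the untwisted ones) and Hodge theory on the compact manifold, as in \cite[page 22]{Sim}. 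No gap remains.
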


\begin{rem}
Note that, if $\theta_{1}=0$, then $\left( A^{\bullet}(M)_{\C},\, \del_{(0,\theta_{2})},\, \delbar_{(0,\theta_{2})} \right)$ has in fact a structure of double complex. Hence we have the Fr\"olicher inequalities, \cite[Theorem 2]{frolicher},
$$ \dim_\C H^{\bullet}\left(A^{\bullet}(M)_{\C};\, \del_{(0,\theta_{2})}\right) \geq\dim_\C H^{\bullet}(A^{\bullet}(M)_{\C};\,\de_{\phi}) $$
and
$$ \dim_\C H^{\bullet}\left(A^{\bullet}(M)_{\C};\, \delbar_{(0,\theta_{2})}\right) \geq\dim_\C H^{\bullet}(A^{\bullet}(M)_{\C};\,\de_{\phi}) \;.
$$
Hence we get the following inequality {\itshape à la} Fr\"olicher, \cite[Corollary 2.6]{angella-tomassini-5},
\begin{eqnarray*}
 \lefteqn{\dim_\C H^{\bullet}(A^{\bullet}(M)_{\C};\, \del_{(\theta_{1},\theta_{2})},\, \delbar_{(\theta_{1},\theta_{2})};\, \del_{(\theta_{1},\theta_{2})}\delbar_{(\theta_{1},\theta_{2})})} \\[5pt]
 && + \dim_\C H^{\bullet}(A^{\bullet}(M)_{\C};\, \del_{(\theta_{1},\theta_{2})}\delbar_{(\theta_{1},\theta_{2})};\, \del_{(\theta_{1},\theta_{2})}, \delbar_{(\theta_{1},\theta_{2})}) \\[5pt]
 &\geq& 2\, \dim_\C H^{\bullet}(A^{\bullet}(M)_{\C};\, \de_{\phi}) \;.
\end{eqnarray*}

But in general, when $\theta_{1}\not=0$, one does not have a double complex structure on $\left( A^{\bullet}(M)_{\C},\, \del_{(\theta_{1},\theta_{2})},\, \delbar_{(\theta_{1},\theta_{2})} \right)$.
In fact, Example \ref{ex:nakamura} shows that the inequality
$$ \dim_\C H^{\bullet}\left(A^{\bullet}(M)_{\C};\, \delbar_{(\theta_{1},\theta_{2})}\right) \geq\dim_\C H^{\bullet}(A^{\bullet}(M)_{\C};\,\de_{\phi})
$$
may fail.
\end{rem}

\subsection{Hodge theory and cohomologies with twisted differentials}

Take a Hermitian metric $g$ on $\left( M ,\, J \right)$.
We consider the adjoint operators $\de_{\phi}^{\ast}$, $\del_{(\theta_{1},\theta_{2})}^{\ast}$, and $\delbar_{(\theta_{1},\theta_{2})}^{\ast}$ of the operators $\de_\phi$, $\del_{(\theta_{1},\theta_{2})}$, and $\delbar_{(\theta_{1},\theta_{2})}$, respectively, with respect to the inner product $\left(\sspace, \ssspace\right)$ induced by $g$.
We define the Laplacian operators
\begin{eqnarray*}
 \Delta_{\de_{\phi}} &:=& \left[ \de_{\phi},\, \de_{\phi}^{\ast} \right] \;:=\; \de_{\phi}\de_{\phi}^{\ast}+\de_{\phi}^{\ast}\de_{\phi} \;, \\[5pt]
 \Delta_{\del_{(\theta_{1},\theta_{2})}} &:=& \left[ \del_{(\theta_{1},\theta_{2})},\, \del_{(\theta_{1},\theta_{2})}^{\ast} \right] \;:=\; \del_{(\theta_{1},\theta_{2})}\del_{(\theta_{1},\theta_{2})}^{\ast}+\del_{(\theta_{1},\theta_{2})}^{\ast}\del_{(\theta_{1},\theta_{2})} \;, \\[5pt]
 \Delta_{\delbar_{(\theta_{1},\theta_{2})}} &:=& \left[ \delbar_{(\theta_{1},\theta_{2})} ,\, \delbar_{(\theta_{1},\theta_{2})}^{\ast} \right] \;:=\; \delbar_{(\theta_{1},\theta_{2})}\delbar_{(\theta_{1},\theta_{2})}^{\ast}+\delbar_{(\theta_{1},\theta_{2})}^{\ast}\delbar_{(\theta_{1},\theta_{2})} \,, \\[5pt]
 \Delta_{BC,\del_{(\theta_{1},\theta_{2})}, \delbar_{(\theta_{1},\theta_{2})}} &:=& \left(\del_{(\theta_{1},\theta_{2})}\delbar_{(\theta_{1},\theta_{2})}\right)\left(\del_{(\theta_{1},\theta_{2})}\delbar_{(\theta_{1},\theta_{2})}\right)^\ast+\left(\del_{(\theta_{1},\theta_{2})}\delbar_{(\theta_{1},\theta_{2})}\right)^\ast\left(\del_{(\theta_{1},\theta_{2})}\delbar_{(\theta_{1},\theta_{2})}\right) \\[5pt]
 && + \left(\delbar_{(\theta_{1},\theta_{2})}^\ast\del_{(\theta_{1},\theta_{2})}\right)\left(\delbar_{(\theta_{1},\theta_{2})}^\ast\del_{(\theta_{1},\theta_{2})}\right)^\ast + \left(\delbar_{(\theta_{1},\theta_{2})}^\ast\del_{(\theta_{1},\theta_{2})}\right)^\ast\left(\delbar_{(\theta_{1},\theta_{2})}^\ast\del_{(\theta_{1},\theta_{2})}\right) \\[5pt]
 && + \delbar_{(\theta_{1},\theta_{2})}^\ast\delbar_{(\theta_{1},\theta_{2})} + \del_{(\theta_{1},\theta_{2})}^\ast\del_{(\theta_{1},\theta_{2})} \;, \\[5pt]
 \Delta_{A,\del_{(\theta_{1},\theta_{2})}, \delbar_{(\theta_{1},\theta_{2})}} &:=& \del_{(\theta_{1},\theta_{2})}\del_{(\theta_{1},\theta_{2})}^\ast + \delbar_{(\theta_{1},\theta_{2})}\delbar_{(\theta_{1},\theta_{2})}^\ast \\[5pt]
 && + \left(\del_{(\theta_{1},\theta_{2})}\delbar_{(\theta_{1},\theta_{2})}\right)^\ast\left(\del_{(\theta_{1},\theta_{2})}\delbar_{(\theta_{1},\theta_{2})}\right) + \left(\del_{(\theta_{1},\theta_{2})}\delbar_{(\theta_{1},\theta_{2})}\right)\left(\del_{(\theta_{1},\theta_{2})}\delbar_{(\theta_{1},\theta_{2})}\right)^\ast \\[5pt]
 && + \left(\delbar_{(\theta_{1},\theta_{2})}\del_{(\theta_{1},\theta_{2})}^\ast\right)^\ast\left(\delbar_{(\theta_{1},\theta_{2})}\del_{(\theta_{1},\theta_{2})}^\ast\right)+\left(\delbar_{(\theta_{1},\theta_{2})}\del_{(\theta_{1},\theta_{2})}^\ast\right)\left(\delbar_{(\theta_{1},\theta_{2})}\del_{(\theta_{1},\theta_{2})}^\ast\right)^\ast \;.
\end{eqnarray*}

Note that the principal parts of the above operators are equal to the principal parts of the corresponding operators with $\phi=0$ and $\left(\theta_1,\theta_2\right)=(0,0)$. In particular, $\Delta_{\de_{\phi}}$, and $\Delta_{\del_{(\theta_{1},\theta_{2})}}$, and $\Delta_{\delbar_{(\theta_{1},\theta_{2})}}$ are $2$nd order self-adjoint elliptic differential operators, see \cite[page 22]{Sim}. In particular, one has the orthogonal decompositions
\begin{eqnarray*}
 A^{\bullet}(M)_{\C} &=& \ker\Delta_{\de_{\phi}} \stackrel{\perp}{\oplus} \imm\Delta_{\de_{\phi}} \;, \\[5pt]
 A^{\bullet}(M)_{\C} &=& \ker\Delta_{\del_{(\theta_{1},\theta_{2})}} \stackrel{\perp}{\oplus} \imm\Delta_{\del_{(\theta_{1},\theta_{2})}} \;, \\[5pt]
 A^{\bullet}(M)_{\C} &=& \ker\Delta_{\del_{(\theta_{1},\theta_{2})}} \stackrel{\perp}{\oplus} \imm\Delta_{\del_{(\theta_{1},\theta_{2})}} \;, \\[5pt]
\end{eqnarray*}
with respect to the inner product induced by $g$, and hence the isomorphisms
\begin{eqnarray*}
 H^{\bullet}\left(A^{\bullet}(M)_{\C};\, \de_{\phi}\right) &\simeq& \ker\Delta_{\de_{\phi}} \;, \\[5pt]
 H^{\bullet}\left(A^{\bullet}(M)_{\C};\, \del_{(\theta_{1},\theta_{2})}\right) &\simeq& \ker\Delta_{\del_{(\theta_{1},\theta_{2})}} \;, \\[5pt]
 H^{\bullet}\left(A^{\bullet}(M)_{\C};\, \delbar_{(\theta_{1},\theta_{2})}\right) &\simeq& \ker\Delta_{\delbar_{(\theta_{1},\theta_{2})}}
\end{eqnarray*}
of vector spaces, depending on the metric, see \cite[page 22]{Sim}.

Furthermore, M. Schweitzer proved that $\tilde\Delta_{BC}:=\Delta_{BC,\del_{(0,0)}, \delbar_{(0,0)}}$ and $\tilde\Delta_{A}:=\Delta_{A,\del_{(0,0)}, \delbar_{(0,0)}}$ are $4$th order self-adjoint elliptic differential operators, in \cite[\S2.b, \S2.c]{schweitzer}, see also \cite[Proposition 5]{kodaira-spencer-3}. Hence we have the following result.

\begin{thm}
 Let $\left( M ,\, J \right)$ be a compact complex manifold endowed with a Hermitian metric $g$. 
 Take $\theta_{1}, \theta_{2} \in H^{1,0}_{BC}(M)$.
 Then the operators $\Delta_{BC,\del_{(\theta_{1},\theta_{2})}, \delbar_{(\theta_{1},\theta_{2})}}$ and $\Delta_{A,\del_{(\theta_{1},\theta_{2})}, \delbar_{(\theta_{1},\theta_{2})}}$ are $4$th order self-adjoint elliptic differential operators.
\end{thm}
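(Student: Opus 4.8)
The plan is to deduce the statement from M.~Schweitzer's theorem, cited above, that $\tilde\Delta_{BC}=\Delta_{BC,\del_{(0,0)},\delbar_{(0,0)}}$ and $\tilde\Delta_{A}=\Delta_{A,\del_{(0,0)},\delbar_{(0,0)}}$ are $4$th order self-adjoint elliptic differential operators. The only new input needed is that passing from $(\theta_1,\theta_2)=(0,0)$ to arbitrary $\theta_1,\theta_2\in H^{1,0}_{BC}(M)$ leaves the principal symbols unchanged. Once this is established, ellipticity and the order, which depend only on the principal symbol, are inherited from the untwisted case, while self-adjointness follows at once from the explicit shape of the operators.

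First I would compare the twisted differentials with the untwisted ones at the level of principal symbols. By definition $\del_{(\theta_1,\theta_2)}=\del+L_{\theta_2}+L_{\overline{\theta_1}}$ and $\delbar_{(\theta_1,\theta_2)}=\delbar-L_{\overline{\theta_2}}+L_{\theta_1}$ differ from $\del$ and $\delbar$ by the zeroth-order wedge operators $L_{\theta_2}+L_{\overline{\theta_1}}$ and $-L_{\overline{\theta_2}}+L_{\theta_1}$; hence they are first-order differential operators sharing the principal symbols of $\del$ and $\delbar$. Taking adjoints with respect to $(\sspace,\ssspace)$, the computations above give $\del_{(\theta_1,\theta_2)}^{\ast}=\del^{\ast}+\Lambda_{\theta_2}+\Lambda_{\overline{\theta_1}}$ and $\delbar_{(\theta_1,\theta_2)}^{\ast}=\delbar^{\ast}-\Lambda_{\overline{\theta_2}}+\Lambda_{\theta_1}$, so that $\del_{(\theta_1,\theta_2)}^{\ast}$ and $\delbar_{(\theta_1,\theta_2)}^{\ast}$ differ from $\del^{\ast}$ and $\delbar^{\ast}$ again by zeroth-order operators, and therefore carry the same principal symbols as $\del^{\ast}$ and $\delbar^{\ast}$.

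Next, since the principal symbol is additive on the top-order part and multiplicative under composition, every summand appearing in $\Delta_{BC,\del_{(\theta_1,\theta_2)},\delbar_{(\theta_1,\theta_2)}}$ and $\Delta_{A,\del_{(\theta_1,\theta_2)},\delbar_{(\theta_1,\theta_2)}}$ has the same principal symbol as the corresponding summand with $(\theta_1,\theta_2)=(0,0)$: the four $4$th order terms, such as $\left(\del_{(\theta_1,\theta_2)}\delbar_{(\theta_1,\theta_2)}\right)\left(\del_{(\theta_1,\theta_2)}\delbar_{(\theta_1,\theta_2)}\right)^{\ast}$, match those of $\tilde\Delta_{BC}$, while the genuinely $2$nd order terms $\delbar_{(\theta_1,\theta_2)}^{\ast}\delbar_{(\theta_1,\theta_2)}$ and $\del_{(\theta_1,\theta_2)}^{\ast}\del_{(\theta_1,\theta_2)}$ stay of order $2$ and hence contribute nothing to the $4$th order symbol. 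Summing, $\Delta_{BC,\del_{(\theta_1,\theta_2)},\delbar_{(\theta_1,\theta_2)}}$ and $\Delta_{A,\del_{(\theta_1,\theta_2)},\delbar_{(\theta_1,\theta_2)}}$ share the $4$th order principal symbols of $\tilde\Delta_{BC}$ and $\tilde\Delta_{A}$; since the latter are elliptic by Schweitzer's theorem, so are the twisted operators, and they are of order $4$. Finally, each is a finite sum of terms of the form $PP^{\ast}$ or $P^{\ast}P$, each formally self-adjoint, so the whole operator is self-adjoint.

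The only delicate point is the symbol bookkeeping, and in particular the verification that the adjoint operators retain the untwisted principal symbols; this is immediate from the fact, recorded above, that the adjoint of the zeroth-order operator $L_\phi$ is the zeroth-order operator $\Lambda_\phi$. No genuine analytic obstacle arises, precisely because the twist is always by operators of order zero, so that neither composition nor adjunction can alter the top-order behaviour.
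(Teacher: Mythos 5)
Your proof is correct and follows essentially the same route as the paper: there the statement is justified by the remark that the principal parts of $\Delta_{BC,\del_{(\theta_{1},\theta_{2})},\delbar_{(\theta_{1},\theta_{2})}}$ and $\Delta_{A,\del_{(\theta_{1},\theta_{2})},\delbar_{(\theta_{1},\theta_{2})}}$ coincide with those of the untwisted operators $\tilde\Delta_{BC}$ and $\tilde\Delta_{A}$ (the twists $L_{\theta}$ and $\Lambda_{\theta}$ being of order zero), combined with Schweitzer's theorem that the latter are $4$th order self-adjoint elliptic operators. Your write-up simply makes explicit the symbol bookkeeping and the self-adjointness of the summands $PP^{\ast}$ and $P^{\ast}P$, which the paper leaves implicit.
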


For the classical theory of self-adjoint elliptic differential operators, see, e.g., \cite[page 450]{kodaira}, we get the following corollaries.

\begin{cor}\label{cor:bc-isom-ker}
 Let $\left( M ,\, J \right)$ be a compact complex manifold endowed with a Hermitian metric $g$.
 Take $\theta_{1}, \theta_{2} \in H^{1,0}_{BC}(M)$.
 \begin{itemize}
  \item There is an orthogonal decomposition
        $$ A^{\bullet}(M)_\C \;=\; \ker\Delta_{BC,\del_{(\theta_{1},\theta_{2})}, \delbar_{(\theta_{1},\theta_{2})}} \stackrel{\perp}{\oplus} \imm\Delta_{BC,\del_{(\theta_{1},\theta_{2})}, \delbar_{(\theta_{1},\theta_{2})}} \;, $$
        with respect to the inner product induced by $g$.
        Hence there is an isomorphism
        $$ H^{\bullet}\left(A^{\bullet}(M)_{\C};\, \del_{(\theta_{1},\theta_{2})},\, \delbar_{(\theta_{1},\theta_{2})};\, \del_{(\theta_{1},\theta_{2})}\delbar_{(\theta_{1},\theta_{2})}\right) \;=\; \ker\Delta_{BC,\del_{(\theta_{1},\theta_{2})}, \delbar_{(\theta_{1},\theta_{2})}} \;, $$
        depending on the metric.
  \item There is an orthogonal decomposition
        $$ A^{\bullet}(M)_\C \;=\; \ker\Delta_{A,\del_{(\theta_{1},\theta_{2})}, \delbar_{(\theta_{1},\theta_{2})}} \stackrel{\perp}{\oplus} \imm\Delta_{A,\del_{(\theta_{1},\theta_{2})}, \delbar_{(\theta_{1},\theta_{2})}} \;, $$
        with respect to the inner product induced by $g$.
        Hence there is an isomorphism
        $$ H^{\bullet}\left(A^{\bullet}(M)_{\C};\, \del_{(\theta_{1},\theta_{2})}\delbar_{(\theta_{1},\theta_{2})};\, \del_{(\theta_{1},\theta_{2})},\, \delbar_{(\theta_{1},\theta_{2})}\right) \;=\; \ker\Delta_{A,\del_{(\theta_{1},\theta_{2})}, \delbar_{(\theta_{1},\theta_{2})}} \;, $$
        depending on the metric.
 \end{itemize}
\end{cor}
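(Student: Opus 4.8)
The plan is to observe that, once the preceding theorem guarantees that $\Delta_{BC,\del_{(\theta_1,\theta_2)},\delbar_{(\theta_1,\theta_2)}}$ and $\Delta_{A,\del_{(\theta_1,\theta_2)},\delbar_{(\theta_1,\theta_2)}}$ are $4$th order self-adjoint elliptic operators, the statement becomes a transcription of the argument of M.~Schweitzer \cite{schweitzer} for the untwisted Bott--Chern and Aeppli Laplacians. The only point requiring attention is that every orthogonality relation used below rests on the identities $\del_{(\theta_1,\theta_2)}^2=\delbar_{(\theta_1,\theta_2)}^2=\del_{(\theta_1,\theta_2)}\delbar_{(\theta_1,\theta_2)}+\delbar_{(\theta_1,\theta_2)}\del_{(\theta_1,\theta_2)}=0$ established above, which hold exactly as in the classical case. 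For brevity I set $D:=\del_{(\theta_1,\theta_2)}$ and $\bar D:=\delbar_{(\theta_1,\theta_2)}$.

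The two orthogonal decompositions are immediate from the general theory of self-adjoint elliptic operators on a compact manifold: such an operator $\Delta$ has finite-dimensional kernel and closed image, whence $A^\bullet(M)_\C=\ker\Delta\stackrel{\perp}{\oplus}\imm\Delta$ (see \cite[page 450]{kodaira}), applied to $\Delta=\Delta_{BC,\del_{(\theta_1,\theta_2)},\delbar_{(\theta_1,\theta_2)}}$ and to $\Delta=\Delta_{A,\del_{(\theta_1,\theta_2)},\delbar_{(\theta_1,\theta_2)}}$. In each case the resulting isomorphism of the cohomology with the kernel of $\Delta$ follows once we identify that kernel.

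For the Bott--Chern case I would first characterize the harmonic space. Pairing $\Delta_{BC,\del_{(\theta_1,\theta_2)},\delbar_{(\theta_1,\theta_2)}}\alpha$ with $\alpha$ and using self-adjointness term by term yields
\[ \left(\Delta_{BC,\del_{(\theta_1,\theta_2)},\delbar_{(\theta_1,\theta_2)}}\alpha,\,\alpha\right)\;=\;\|(D\bar D)^\ast\alpha\|^2+\|D\bar D\alpha\|^2+\|(\bar D^\ast D)^\ast\alpha\|^2+\|\bar D^\ast D\alpha\|^2+\|\bar D\alpha\|^2+\|D\alpha\|^2, \]
so the Laplacian annihilates $\alpha$ precisely when all six summands vanish. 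Since the vanishing of $\|D\alpha\|^2$ and $\|\bar D\alpha\|^2$ automatically entails that of $\|D\bar D\alpha\|^2$, $\|(\bar D^\ast D)^\ast\alpha\|^2$ and $\|\bar D^\ast D\alpha\|^2$, this reduces to $D\alpha=\bar D\alpha=(D\bar D)^\ast\alpha=0$, that is,
\[ \ker\Delta_{BC,\del_{(\theta_1,\theta_2)},\delbar_{(\theta_1,\theta_2)}}\;=\;\ker D\cap\ker\bar D\cap\ker(D\bar D)^\ast. \]
Next I would record the refined decomposition
\[ A^\bullet(M)_\C\;=\;\ker\Delta_{BC,\del_{(\theta_1,\theta_2)},\delbar_{(\theta_1,\theta_2)}}\stackrel{\perp}{\oplus}\imm(D\bar D)\stackrel{\perp}{\oplus}\left(\imm D^\ast+\imm\bar D^\ast\right). \]
The three summands are pairwise orthogonal: $\imm(D\bar D)\perp(\imm D^\ast+\imm\bar D^\ast)$ because $DD\bar D=0$ and $\bar D D\bar D=-D\bar D\bar D=0$, while the harmonic space is orthogonal to $\imm(D\bar D)$ (since $(D\bar D)^\ast$ annihilates it) and to $\imm D^\ast+\imm\bar D^\ast$ (since $D$ and $\bar D$ annihilate it). That they span follows by checking, summand by summand, the inclusion $\imm\Delta_{BC,\del_{(\theta_1,\theta_2)},\delbar_{(\theta_1,\theta_2)}}\subseteq\imm(D\bar D)+\imm D^\ast+\imm\bar D^\ast$, the reverse inclusion holding because each of the three subspaces lies in $(\ker\Delta_{BC,\del_{(\theta_1,\theta_2)},\delbar_{(\theta_1,\theta_2)}})^\perp=\imm\Delta_{BC,\del_{(\theta_1,\theta_2)},\delbar_{(\theta_1,\theta_2)}}$.

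The isomorphism of the Bott--Chern cohomology with the harmonic space then follows formally: given a cocycle $\alpha$, i.e.\ $D\alpha=\bar D\alpha=0$, its component in $\imm D^\ast+\imm\bar D^\ast$ vanishes by orthogonality, so $\alpha$ differs from its harmonic part by an element of $\imm(D\bar D)$, giving surjectivity; injectivity holds because a harmonic representative lying in $\imm(D\bar D)$ must vanish, by $\ker\Delta_{BC,\del_{(\theta_1,\theta_2)},\delbar_{(\theta_1,\theta_2)}}\cap\imm(D\bar D)=0$. The Aeppli statement is entirely dual: the same pairing gives $\ker\Delta_{A,\del_{(\theta_1,\theta_2)},\delbar_{(\theta_1,\theta_2)}}=\ker D^\ast\cap\ker\bar D^\ast\cap\ker(D\bar D)$ together with $A^\bullet(M)_\C=\ker\Delta_{A,\del_{(\theta_1,\theta_2)},\delbar_{(\theta_1,\theta_2)}}\stackrel{\perp}{\oplus}(\imm D+\imm\bar D)\stackrel{\perp}{\oplus}\imm(D\bar D)^\ast$, and the same two-line argument applies. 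I do not expect a genuine obstacle: the whole content is formal Hodge-theoretic bookkeeping, and its only substantive inputs—that the operators are self-adjoint elliptic and that $D,\bar D$ satisfy the nilpotency and anticommutation relations—are already available.
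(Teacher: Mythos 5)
Your proof is correct and takes essentially the same route as the paper: the paper's own proof consists of invoking the preceding theorem (that $\Delta_{BC,\del_{(\theta_{1},\theta_{2})},\delbar_{(\theta_{1},\theta_{2})}}$ and $\Delta_{A,\del_{(\theta_{1},\theta_{2})},\delbar_{(\theta_{1},\theta_{2})}}$ are $4$th order self-adjoint elliptic operators) together with a citation of the classical theory \cite{kodaira, schweitzer}, and what you have written out --- the kernel characterizations, the three-way orthogonal decompositions, and the resulting isomorphisms with cohomology --- is precisely the standard argument those references supply.
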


In particular, it follows that the Hodge-$*$-operator induces isomorphisms between cohomologies. (With abuse of notation, for $\zeta_{1},\zeta_{2}\in H^{0,1}_{BC}(X)$, denote $\del_{(\zeta_{1},\zeta_{2})} := \partial+L_{\zeta_{2}}+L_{\overline{\zeta_{1}}}$ and $\delbar_{(\zeta_{1},\zeta_{2})} := \delbar -L_{\overline{\zeta_{2}}}+L_{\zeta_{1}}$.)

\begin{cor}\label{cor:star-duality}
 Let $\left( M ,\, J \right)$ be a compact complex manifold, of complex dimension $n$, endowed with a Hermitian metric $g$.
 Take $\theta_{1}, \theta_{2} \in H^{1,0}_{BC}(M)$, and $\phi:=\theta_{1}+\overline{\theta_{1}}+\theta_{2}-\overline{\theta_{2}}$.
 Fix a $J$-Hermitian metric $g$, and consider the associated $\C$-anti-linear Hodge-$*$-operator $\overline{\ast}\colon A^{\bullet}(M)_{\C}\to A^{2n-\bullet}(M)_{\C}$. It induces the isomorphisms
 \begin{eqnarray*}
  \overline{\ast} \colon \lefteqn{ H^{\bullet}\left(A^\bullet(M)_\C; \de_\phi\right) \stackrel{\simeq}{\to} H^{2n-\bullet}\left(A^\bullet(M)_\C; \de_{-\phi}\right) \;,} \\[5pt]
  \overline{\ast} \colon \lefteqn{ H^{\bullet}\left(A^\bullet(M)_\C; \del_{(\theta_1,\theta_2)}\right) \stackrel{\simeq}{\to} H^{2n-\bullet}\left(A^\bullet(M)_\C; \del_{(-\theta_2,-\theta_1)}\right) \;, } \\[5pt]
  \overline{\ast} \colon \lefteqn{ H^{\bullet}\left(A^\bullet(M)_\C; \delbar_{(\theta_1,\theta_2)}\right) \stackrel{\simeq}{\to} H^{2n-\bullet}\left(A^\bullet(M)_\C; \delbar_{(-\theta_2,-\theta_1)}\right) \;, } \\[5pt]
  \overline{\ast} \colon \lefteqn{ H^{\bullet}\left(A^\bullet(M)_\C; \del_{(\theta_1,\theta_2)},\, \delbar_{(\theta_1,\theta_2)};\, \del_{(\theta_1,\theta_2)}\delbar_{(\theta_1,\theta_2)}\right) } \\[5pt]
  && \stackrel{\simeq}{\to} H^{2n-\bullet}\left(A^\bullet(M)_\C; \del_{(-\theta_2,-\theta_1)}\delbar_{(-\theta_2,-\theta_1)};\, \del_{(-\theta_2,-\theta_1)},\, \delbar_{(-\theta_2,-\theta_1)} \right) \;.
 \end{eqnarray*}
\end{cor}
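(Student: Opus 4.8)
The plan is to deduce each of the four isomorphisms from the already-available Hodge isomorphism between cohomology and the kernel of a Laplacian, by showing that the anti-linear operator $\overline{\ast}$ carries harmonic forms for one complex onto harmonic forms for its dual. Indeed, the first-order Hodge isomorphisms $H^{\bullet}(A^\bullet(M)_\C;\de_\phi)\simeq\ker\Delta_{\de_\phi}$, $H^{\bullet}(A^\bullet(M)_\C;\del_{(\theta_1,\theta_2)})\simeq\ker\Delta_{\del_{(\theta_1,\theta_2)}}$, $H^{\bullet}(A^\bullet(M)_\C;\delbar_{(\theta_1,\theta_2)})\simeq\ker\Delta_{\delbar_{(\theta_1,\theta_2)}}$, together with the Bott--Chern and Aeppli isomorphisms of Corollary \ref{cor:bc-isom-ker}, are all in hand. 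It therefore suffices, for each pair of dual complexes, to exhibit a degree-reversing anti-linear isomorphism between the two harmonic spaces and to check that it is realized by $\overline{\ast}$. The map induced on cohomology is then the composite of the relevant Hodge isomorphism, $\overline{\ast}$, and the inverse Hodge isomorphism; note that it is $\C$-anti-linear, as is implicit in the statement.

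For the first-order cases I would argue directly from the characterization of harmonicity. Recall that $x$ is $\de_\phi$-harmonic exactly when $\de_\phi x=0$ and $\de_\phi^\ast x=0$. Using the adjoint relation $\de_\phi^\ast=-\overline{\ast}\,\de_{-\phi}\,\overline{\ast}$, together with the same relation with $\phi$ replaced by $-\phi$ and the fact that $\overline{\ast}^2=\pm\id$ on each degree, one computes $\de_{-\phi}^\ast(\overline{\ast}x)=\mp\overline{\ast}\,\de_\phi x$ and $\de_{-\phi}(\overline{\ast}x)=-\overline{\ast}^{-1}\,\de_\phi^\ast x$, so that both vanish whenever $x$ is $\de_\phi$-harmonic. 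Since $\overline{\ast}$ is an anti-linear bijection with $\overline{\ast}^{-1}=\pm\overline{\ast}$, the argument is symmetric and yields an isomorphism $\ker\Delta_{\de_\phi}\xrightarrow{\simeq}\ker\Delta_{\de_{-\phi}}$ in complementary degree. The cases of $\del_{(\theta_1,\theta_2)}$ and $\delbar_{(\theta_1,\theta_2)}$ are identical, using $\del_{(\theta_1,\theta_2)}^\ast=-\overline{\ast}\,\del_{(-\theta_2,-\theta_1)}\,\overline{\ast}$ and $\delbar_{(\theta_1,\theta_2)}^\ast=-\overline{\ast}\,\delbar_{(-\theta_2,-\theta_1)}\,\overline{\ast}$, which also produce the twist interchange $(\theta_1,\theta_2)\mapsto(-\theta_2,-\theta_1)$ appearing in the target.

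For the Bott--Chern/Aeppli isomorphism the first step is to make the harmonic spaces explicit. Writing $P=\del_{(\theta_1,\theta_2)}$ and $Q=\delbar_{(\theta_1,\theta_2)}$, the non-negativity of the individual summands of the fourth-order operators gives
\[
\ker\Delta_{BC,P,Q}=\{x\st Px=0,\ Qx=0,\ (PQ)^\ast x=0\},\qquad
\ker\Delta_{A,P,Q}=\{x\st P^\ast x=0,\ Q^\ast x=0,\ PQ\,x=0\},
\]
the remaining defining equations being automatically satisfied once these hold. Setting $P'=\del_{(-\theta_2,-\theta_1)}$ and $Q'=\delbar_{(-\theta_2,-\theta_1)}$, I would then verify, using $P^\ast=-\overline{\ast}P'\overline{\ast}$ and $Q^\ast=-\overline{\ast}Q'\overline{\ast}$ (and the analogous relations obtained by the involution $(\theta_1,\theta_2)\mapsto(-\theta_2,-\theta_1)$), the anticommutation $PQ=-QP$, and $\overline{\ast}^2=\pm\id$, that $\overline{\ast}$ sends the three defining conditions of $\ker\Delta_{BC,P,Q}$ to those of $\ker\Delta_{A,P',Q'}$; for instance $Px=0$ and $Qx=0$ become $P'^\ast(\overline{\ast}x)=0$ and $Q'^\ast(\overline{\ast}x)=0$, while $(PQ)^\ast=Q^\ast P^\ast=\pm\overline{\ast}\,Q'P'\,\overline{\ast}=\mp\overline{\ast}\,P'Q'\,\overline{\ast}$ converts $(PQ)^\ast x=0$ into $P'Q'(\overline{\ast}x)=0$. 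This yields $\ker\Delta_{BC,P,Q}\xrightarrow{\simeq}\ker\Delta_{A,P',Q'}$ in complementary degree, and, via Corollary \ref{cor:bc-isom-ker}, the last displayed isomorphism.

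The main obstacle is precisely this Bott--Chern/Aeppli step: unlike the first-order cases, the fourth-order Laplacians are not governed by a single pair of closedness and coclosedness equations, so one must first isolate the correct minimal system of harmonic equations, and then track through the duality both the interchange of $\del$- and $\delbar$-type conditions and the twist replacement $(\theta_1,\theta_2)\mapsto(-\theta_2,-\theta_1)$, keeping the degree-dependent signs coming from $\overline{\ast}^2$ under control. These signs all cancel in the end, since each harmonic equation is carried to a single harmonic equation, but this bookkeeping is the only genuinely delicate point of the proof.
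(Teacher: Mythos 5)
Your proposal is correct and follows essentially the same route as the paper: both reduce the statement, via the Hodge isomorphisms of \cite[page~22]{Sim} and Corollary \ref{cor:bc-isom-ker}, to the fact that $\overline{\ast}$ exchanges the relevant harmonic spaces, carrying $\ker\Delta_{\de_\phi}$, $\ker\Delta_{\del_{(\theta_1,\theta_2)}}$, $\ker\Delta_{\delbar_{(\theta_1,\theta_2)}}$, and $\ker\Delta_{BC,\del_{(\theta_1,\theta_2)},\delbar_{(\theta_1,\theta_2)}}$ onto the corresponding kernels for the twist $(-\theta_2,-\theta_1)$, with Bott--Chern harmonics going to Aeppli harmonics. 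The only difference is presentational: the paper asserts operator-level commutation identities of the form $\overline{\ast}\,\Delta=\Delta'\,\overline{\ast}$, whereas you verify the equivalent kernel-level correspondence through the first-order characterizations of harmonic forms, which is a legitimate and fully detailed way of establishing the same fact.
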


\begin{proof}
 Note that
 \begin{eqnarray*}
  \overline{\ast} \, \Delta_{\de_{\phi}} &=& \Delta_{\de_{-\phi}} \, \overline{\ast} \;, \\[5pt]
  \overline{\ast} \, \Delta_{\del_{(\theta_1,\theta_2)}} &=& \Delta_{\del_{(-\theta_2,-\theta_1)}} \, \overline{\ast} \;, \\[5pt]
  \overline{\ast} \, \Delta_{\delbar_{(\theta_1,\theta_2)}} &=& \Delta_{\delbar_{(-\theta_2,-\theta_1)}} \, \overline{\ast} \;, \\[5pt]
  \overline{\ast} \, \Delta_{BC_{(\theta_1,\theta_2)}} &=& \Delta_{A_{(-\theta_2,-\theta_1)}} \, \overline{\ast} \;.
 \end{eqnarray*}
 The statement follows from \cite[page 22]{Sim} and Corollary \ref{cor:bc-isom-ker}.
\end{proof}

\subsection{Hodge theory on K\"ahler manifolds with twisted differentials}

Consider the case of a compact complex manifold endowed with a K\"ahler metric. Thanks to the K\"ahler identities for twisted differentials, we have the following analogue of the classical Hodge decomposition theorem.

\begin{prop}\label{prop:kahler-lapl}
 Let $\left( M ,\, J \right)$ be a compact complex manifold endowed with a K\"ahler metric $g$.
 Take $[\phi]\in H^1(M;\C)$.
 Consider $\theta_{1}, \theta_{2} \in H^{1,0}_{BC}(M)$ such that $\phi = \theta_1+\overline\theta_1 + \theta_2-\overline\theta_2$.
 Then
 $$ \Delta_{\de_{\phi}} \;=\; 2\, \Delta_{\del_{(\theta_{1},\theta_{2})}} \;=\; 2\, \Delta_{\delbar_{(\theta_{1},\theta_{2})}} $$
 and
 $$ \Delta_{BC,\del_{(\theta_{1},\theta_{2})}, \delbar_{(\theta_{1},\theta_{2})}} \;=\; \Delta_{\delbar_{(\theta_{1},\theta_{2})}}^2 + \del_{(\theta_{1},\theta_{2})}^\ast\del_{(\theta_{1},\theta_{2})}+\delbar_{(\theta_{1},\theta_{2})}^\ast\delbar_{(\theta_{1},\theta_{2})} $$
 and
 $$ \Delta_{A,\del_{(\theta_{1},\theta_{2})}, \delbar_{(\theta_{1},\theta_{2})}} \;=\; \Delta_{\delbar_{(\theta_{1},\theta_{2})}}^2 + \del_{(\theta_{1},\theta_{2})}\del_{(\theta_{1},\theta_{2})}^\ast+\delbar_{(\theta_{1},\theta_{2})}\delbar_{(\theta_{1},\theta_{2})}^\ast \;. $$
\end{prop}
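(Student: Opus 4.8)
The guiding principle is that the twisted operators $D':=\del_{(\theta_1,\theta_2)}$ and $D'':=\delbar_{(\theta_1,\theta_2)}$ obey formally the very same relations as the untwisted $\del,\delbar$ on a K\"ahler manifold, so the plan is to isolate that algebra and then run the classical K\"ahler computations verbatim. The facts I would use are the nilpotency and anticommutation relations $D'D'=D''D''=D'D''+D''D'=0$ and the twisted K\"ahler identities $[\Lambda_\omega,D']=\sqrt{-1}\,(D'')^\ast$ and $[\Lambda_\omega,D'']=-\sqrt{-1}\,(D')^\ast$ recorded just above the statement, together with the observation that $\de_\phi=D'+D''$ is exactly the total differential.

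For the first identity I would begin by deriving the mixed anticommutation relations: substituting $(D'')^\ast=-\sqrt{-1}\,[\Lambda_\omega,D']$ into $D'(D'')^\ast+(D'')^\ast D'$ and using $D'D'=0$ gives $D'(D'')^\ast+(D'')^\ast D'=0$, and symmetrically $D''(D')^\ast+(D')^\ast D''=0$. Expanding $\Delta_{\de_\phi}=(D'+D'')\bigl((D')^\ast+(D'')^\ast\bigr)+\bigl((D')^\ast+(D'')^\ast\bigr)(D'+D'')$ and discarding these two mixed anticommutators would then give $\Delta_{\de_\phi}=\Delta_{D'}+\Delta_{D''}$. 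To finish, I would replace each adjoint in $\Delta_{D'}$ and $\Delta_{D''}$ by its K\"ahler-identity expression and regroup: the resulting terms either cancel in pairs or combine into the vanishing anticommutator $D'D''+D''D'$, so $\Delta_{D'}-\Delta_{D''}=0$ and hence $\Delta_{\de_\phi}=2\Delta_{D'}=2\Delta_{D''}$.

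For the Bott-Chern and Aeppli identities I would follow the computation of Schweitzer \cite[\S2.b, \S2.c]{schweitzer}. The only ingredients in that computation are the K\"ahler identities, the nilpotency and anticommutation relations, and the fact that the Laplacian commutes with $D'$, $D''$, $(D')^\ast$, $(D'')^\ast$; the last point follows here because $[\Delta_{D''},D'']=0$ and $[\Delta_{D''},D']=[\Delta_{D'},D']=0$ by the first part, with the remaining two commutators obtained by taking adjoints. Since all of these hold with $\del,\delbar$ replaced by $D',D''$, Schweitzer's collapse of the quartic part of $\Delta_{BC,\del_{(\theta_1,\theta_2)},\delbar_{(\theta_1,\theta_2)}}$ and of $\Delta_{A,\del_{(\theta_1,\theta_2)},\delbar_{(\theta_1,\theta_2)}}$ onto $\Delta_{D''}^2$ carries over unchanged, leaving exactly the stated second-order remainders; alternatively, the Aeppli formula can be obtained from the Bott-Chern one via the Hodge-$\overline{\ast}$-duality of Corollary \ref{cor:star-duality}.

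The whole content sits in the first identity. The main obstacle I anticipate is purely the bookkeeping in the fourth-order computation, where many quartic terms must be reorganised using the anticommutation relations and the commutation of the Laplacian with the differentials; I expect no genuinely new difficulty beyond the classical K\"ahler case, precisely because $D'$ and $D''$ satisfy the identical formal algebra. The one point to check with care is that every relation borrowed from the untwisted setting is among those listed above, so that it indeed survives the twisting.
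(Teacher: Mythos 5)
Your proposal is correct and follows essentially the same route as the paper: the paper likewise derives $\left[\del_{(\theta_1,\theta_2)},\delbar_{(\theta_1,\theta_2)}^\ast\right]=0$ and $\left[\delbar_{(\theta_1,\theta_2)},\del_{(\theta_1,\theta_2)}^\ast\right]=0$ from the twisted K\"ahler identities, concludes $\Delta_{\de_\phi}=\Delta_{\del_{(\theta_1,\theta_2)}}+\Delta_{\delbar_{(\theta_1,\theta_2)}}$ and $\Delta_{\del_{(\theta_1,\theta_2)}}=\Delta_{\delbar_{(\theta_1,\theta_2)}}$, and then obtains the Bott--Chern and Aeppli formulas by exactly the Kodaira--Spencer/Schweitzer expansion of $\Delta_{\delbar_{(\theta_1,\theta_2)}}^2=\Delta_{\delbar_{(\theta_1,\theta_2)}}\,\Delta_{\del_{(\theta_1,\theta_2)}}$ that you invoke. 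The only cosmetic difference is that the paper writes this quartic computation out explicitly (using just the nilpotency and anticommutation relations), while you argue it carries over formally from the untwisted case --- which is legitimate, since every relation used is among those you verified for the twisted operators.
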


\begin{proof}
 For the sake of completeness, we detail the proof.

 Take $[\phi]\in H^1(M;\C)$. Then $[\phi]=[\Re\phi]+\sqrt{-1}\,[\Im\phi]$ where $[\Re\phi] \in H^1(M;\R)\subset H^1(M;\C)$ and $[\Im\phi] \in H^1(M;\R)\subset H^1(M;\C)$. By the Hodge decomposition theorem for compact K\"ahler manifolds, one has that the identity induces the isomorphism $H^{1,0}_{BC}(M) \oplus H^{0,1}_{BC}(M) \stackrel{\simeq}{\to} H^1(M;\C)$. Hence there exists $\theta_1 \in H^{1,0}_{BC}(M)$ such that $\Re\phi = \theta_1 + \overline\theta_1$, and there exists $\theta_2 \in H^{1,0}_{BC}(M)$ such that $\sqrt{-1}\Im\Phi = \theta_2 - \overline\theta_2$.

 Note that
 $$ \de_\phi \;=\; \de + L_\phi \;=\; \del + L_{\theta_2} + L_{\overline\theta_1} + \delbar - L_{\overline\theta_2} + L_{\theta_1} \;=\; \del_{\left(\theta_1,\theta_2\right)} + \delbar_{\left(\theta_1,\theta_2\right)} \;. $$

 Firstly, note that, by the K\"ahler identities for twisted differentials, we have:
 \begin{eqnarray*}
  \left[ \del_{\left(\theta_1,\theta_2\right)},\, \delbar_{\left(\theta_1,\theta_2\right)}^{\ast} \right]
  &=& -\sqrt{-1}\, \left[ \del_{\left(\theta_1,\theta_2\right)},\, \Lambda_\omega \, \del_{\left(\theta_1,\theta_2\right)} - \del_{\left(\theta_1,\theta_2\right)} \, \Lambda_\omega \right] \\[5pt]
  &=& -\sqrt{-1}\, \left( \del_{\left(\theta_1,\theta_2\right)} \, \Lambda_\omega \, \del_{\left(\theta_1,\theta_2\right)} - \del_{\left(\theta_1,\theta_2\right)}^2 \, \Lambda_\omega + \Lambda_\omega \, \del_{\left(\theta_1,\theta_2\right)}^2 - \del_{\left(\theta_1,\theta_2\right)} \, \Lambda_\omega \, \del_{\left(\theta_1,\theta_2\right)} \right) \\[5pt]
  &=& 0
 \end{eqnarray*}
 and, by conjugation,
 $$ \left[ \delbar_{\left(\theta_1,\theta_2\right)},\, \del_{\left(\theta_1,\theta_2\right)}^{\ast} \right] \;=\; 0 \;, $$
 where $\omega$ denotes the $(1,1)$-form associated to $g$.

 Therefore
 \begin{eqnarray*}
  \Delta_{\de_{\phi}} &=& \left[ \de_\phi,\, \de_\phi^\ast \right] \;=\; \left[ \del_{\left(\theta_1,\theta_2\right)} + \delbar_{\left(\theta_1,\theta_2\right)} ,\, \del_{\left(\theta_1,\theta_2\right)}^{\ast} + \delbar_{\left(\theta_1,\theta_2\right)}^{\ast} \right] \\[5pt]
  &=& \left[ \del_{\left(\theta_1,\theta_2\right)} ,\, \del_{\left(\theta_1,\theta_2\right)}^{\ast} \right] + \left[ \del_{\left(\theta_1,\theta_2\right)} ,\, \delbar_{\left(\theta_1,\theta_2\right)}^{\ast} \right] + \left[ \delbar_{\left(\theta_1,\theta_2\right)} ,\, \del_{\left(\theta_1,\theta_2\right)}^{\ast} \right] + \left[ \delbar_{\left(\theta_1,\theta_2\right)} ,\, \delbar_{\left(\theta_1,\theta_2\right)}^{\ast} \right] \\[5pt]
  &=& \Delta_{\del_{(\theta_{1},\theta_{2})}} + \Delta_{\delbar_{(\theta_{1},\theta_{2})}} \;.
 \end{eqnarray*}
 Hence we have to show that
 $$ \Delta_{\del_{(\theta_{1},\theta_{2})}} \;=\; \Delta_{\delbar_{(\theta_{1},\theta_{2})}} \;. $$
 Indeed, by using the K\"ahler identities, we have
 \begin{eqnarray*}
  \Delta_{\del_{(\theta_{1},\theta_{2})}} &=& \left[ \del_{(\theta_{1},\theta_{2})},\, \del_{(\theta_{1},\theta_{2})}^\ast \right] \;=\; \sqrt{-1}\, \left[ \del_{(\theta_{1},\theta_{2})},\, \left[ \Lambda_{\omega} ,\, \delbar_{(\theta_{1},\theta_{2})} \right] \right] \\[5pt]
  &=& \sqrt{-1}\, \left[ \Lambda_{\omega} ,\, \left[ \del_{(\theta_{1},\theta_{2})},\, \delbar_{(\theta_{1},\theta_{2})} \right] \right] + \sqrt{-1}\, \left[ \delbar_{(\theta_{1},\theta_{2})} ,\, \left[ \del_{(\theta_{1},\theta_{2})},\, \Lambda_{\omega} \right] \right] \\[5pt]
  &=& \left[ \delbar_{(\theta_{1},\theta_{2})},\, \delbar_{(\theta_{1},\theta_{2})}^\ast \right] \;=\; \Delta_{\delbar_{(\theta_{1},\theta_{2})}} \;.
 \end{eqnarray*}

 Again by the K\"ahler identities, we have (compare \cite[Proposition 6]{kodaira-spencer-3}, \cite[Proposition 2.4]{schweitzer})
 \begin{eqnarray*}
  \Delta_{\delbar_{(\theta_{1},\theta_{2})}}^2 &=& \Delta_{\delbar_{(\theta_{1},\theta_{2})}} \, \Delta_{\del_{(\theta_{1},\theta_{2})}} \\[5pt]
  &=& \delbar_{(\theta_{1},\theta_{2})}\delbar_{(\theta_{1},\theta_{2})}^\ast\del_{(\theta_{1},\theta_{2})}\del_{(\theta_{1},\theta_{2})}^\ast + \delbar_{(\theta_{1},\theta_{2})}^\ast\delbar_{(\theta_{1},\theta_{2})}\del_{(\theta_{1},\theta_{2})}\del_{(\theta_{1},\theta_{2})}^\ast \\[5pt]
  && + \delbar_{(\theta_{1},\theta_{2})}\delbar_{(\theta_{1},\theta_{2})}^\ast\del_{(\theta_{1},\theta_{2})}^\ast\del_{(\theta_{1},\theta_{2})} + \delbar_{(\theta_{1},\theta_{2})}^\ast\delbar_{(\theta_{1},\theta_{2})}\del_{(\theta_{1},\theta_{2})}^\ast\del_{(\theta_{1},\theta_{2})} \\[5pt]
  &=& - \delbar_{(\theta_{1},\theta_{2})}\del_{(\theta_{1},\theta_{2})}\delbar_{(\theta_{1},\theta_{2})}^\ast\del_{(\theta_{1},\theta_{2})}^\ast - \delbar_{(\theta_{1},\theta_{2})}^\ast\del_{(\theta_{1},\theta_{2})}\delbar_{(\theta_{1},\theta_{2})}\del_{(\theta_{1},\theta_{2})}^\ast \\[5pt]
  && - \delbar_{(\theta_{1},\theta_{2})}\del_{(\theta_{1},\theta_{2})}^\ast\delbar_{(\theta_{1},\theta_{2})}^\ast\del_{(\theta_{1},\theta_{2})} - \delbar_{(\theta_{1},\theta_{2})}^\ast\del_{(\theta_{1},\theta_{2})}^\ast\delbar_{(\theta_{1},\theta_{2})}\del_{(\theta_{1},\theta_{2})} \\[5pt]
  &=& \del_{(\theta_{1},\theta_{2})}\delbar_{(\theta_{1},\theta_{2})}\delbar_{(\theta_{1},\theta_{2})}^\ast\del_{(\theta_{1},\theta_{2})}^\ast + \delbar_{(\theta_{1},\theta_{2})}^\ast\del_{(\theta_{1},\theta_{2})}\del_{(\theta_{1},\theta_{2})}^\ast\delbar_{(\theta_{1},\theta_{2})} \\[5pt]
  && + \del_{(\theta_{1},\theta_{2})}^\ast\delbar_{(\theta_{1},\theta_{2})}\delbar_{(\theta_{1},\theta_{2})}^\ast\del_{(\theta_{1},\theta_{2})} + \delbar_{(\theta_{1},\theta_{2})}^\ast\del_{(\theta_{1},\theta_{2})}^\ast\del_{(\theta_{1},\theta_{2})}\delbar_{(\theta_{1},\theta_{2})} \\[5pt]
  &=& \Delta_{BC,\del_{(\theta_{1},\theta_{2})}, \delbar_{(\theta_{1},\theta_{2})}} - \delbar_{(\theta_{1},\theta_{2})}^\ast\delbar_{(\theta_{1},\theta_{2})} - \del_{(\theta_{1},\theta_{2})}^\ast\del_{(\theta_{1},\theta_{2})} \;.
 \end{eqnarray*}
 Analogously,
 \begin{eqnarray*}
  \Delta_{\delbar_{(\theta_{1},\theta_{2})}}^2 &=& \Delta_{\delbar_{(\theta_{1},\theta_{2})}} \, \Delta_{\del_{(\theta_{1},\theta_{2})}} \\[5pt]
  &=& \delbar_{(\theta_{1},\theta_{2})}\delbar_{(\theta_{1},\theta_{2})}^\ast\del_{(\theta_{1},\theta_{2})}\del_{(\theta_{1},\theta_{2})}^\ast + \delbar_{(\theta_{1},\theta_{2})}^\ast\delbar_{(\theta_{1},\theta_{2})}\del_{(\theta_{1},\theta_{2})}\del_{(\theta_{1},\theta_{2})}^\ast \\[5pt]
  && + \delbar_{(\theta_{1},\theta_{2})}\delbar_{(\theta_{1},\theta_{2})}^\ast\del_{(\theta_{1},\theta_{2})}^\ast\del_{(\theta_{1},\theta_{2})} + \delbar_{(\theta_{1},\theta_{2})}^\ast\delbar_{(\theta_{1},\theta_{2})}\del_{(\theta_{1},\theta_{2})}^\ast\del_{(\theta_{1},\theta_{2})} \\[5pt]
  &=& - \delbar_{(\theta_{1},\theta_{2})}\del_{(\theta_{1},\theta_{2})}\delbar_{(\theta_{1},\theta_{2})}^\ast\del_{(\theta_{1},\theta_{2})}^\ast - \delbar_{(\theta_{1},\theta_{2})}^\ast\del_{(\theta_{1},\theta_{2})}\delbar_{(\theta_{1},\theta_{2})}\del_{(\theta_{1},\theta_{2})}^\ast \\[5pt]
  && - \delbar_{(\theta_{1},\theta_{2})}\del_{(\theta_{1},\theta_{2})}^\ast\delbar_{(\theta_{1},\theta_{2})}^\ast\del_{(\theta_{1},\theta_{2})} - \delbar_{(\theta_{1},\theta_{2})}^\ast\del_{(\theta_{1},\theta_{2})}^\ast\delbar_{(\theta_{1},\theta_{2})}\del_{(\theta_{1},\theta_{2})} \\[5pt]
  &=& \del_{(\theta_{1},\theta_{2})}\delbar_{(\theta_{1},\theta_{2})}\delbar_{(\theta_{1},\theta_{2})}^\ast\del_{(\theta_{1},\theta_{2})}^\ast + \del_{(\theta_{1},\theta_{2})}\delbar_{(\theta_{1},\theta_{2})}^\ast\delbar_{(\theta_{1},\theta_{2})}\del_{(\theta_{1},\theta_{2})}^\ast \\[5pt]
  && + \delbar_{(\theta_{1},\theta_{2})}\del_{(\theta_{1},\theta_{2})}^\ast\del_{(\theta_{1},\theta_{2})}\delbar_{(\theta_{1},\theta_{2})}^\ast + \delbar_{(\theta_{1},\theta_{2})}^\ast\del_{(\theta_{1},\theta_{2})}^\ast\del_{(\theta_{1},\theta_{2})}\delbar_{(\theta_{1},\theta_{2})} \\[5pt]
  &=& \Delta_{A,\del_{(\theta_{1},\theta_{2})}, \delbar_{(\theta_{1},\theta_{2})}} - \delbar_{(\theta_{1},\theta_{2})}\delbar_{(\theta_{1},\theta_{2})}^\ast - \del_{(\theta_{1},\theta_{2})}\del_{(\theta_{1},\theta_{2})}^\ast \;.
 \end{eqnarray*}

 These identities conclude the proof.
\end{proof}

In particular, it follows that, on compact K\"ahler manifolds, all the above cohomologies are isomorphic.

\begin{cor}\label{cor:hodge-dec}
 Let $\left( M ,\, J \right)$ be a compact complex manifold endowed with a K\"ahler metric $g$.
 Take $[\phi]\in H^1(M;\C)$.
 Consider $\theta_{1}, \theta_{2} \in H^{1,0}_{BC}(M)$ such that $\phi = \theta_1+\overline\theta_1 + \theta_2-\overline\theta_2$.
 Then there are isomorphisms
 $$ \xymatrix{
 & H^{\bullet}\left(A^{\bullet}(M)_{\C};\, \del_{(\theta_{1},\theta_{2})},\, \delbar_{(\theta_{1},\theta_{2})};\, \del_{(\theta_{1},\theta_{2})}\delbar_{(\theta_{1},\theta_{2})}\right) \ar[ld]_{\simeq} \ar[d]_{\simeq} \ar[rd]^{\simeq} & \\
 H^{\bullet}\left(A^{\bullet}(M)_{\C};\, \del_{(\theta_{1},\theta_{2})}\right) \ar[rd]_{\simeq} & H^{\bullet}\left(A^{\bullet}(M)_{\C};\, \de_{\phi}\right) \ar[d]^{\simeq} & H^{\bullet}\left(A^{\bullet}(M)_{\C};\, \delbar_{(\theta_{1},\theta_{2})}\right) \ar[ld]^{\simeq} \\
 & H^{\bullet}\left(A^{\bullet}(M)_{\C};\, \del_{(\theta_{1},\theta_{2})}\delbar_{(\theta_{1},\theta_{2})};\, \del_{(\theta_{1},\theta_{2})},\, \delbar_{(\theta_{1},\theta_{2})}\right) &
 } $$
 of $\Z$-graded vector spaces.
\end{cor}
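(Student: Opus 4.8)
The plan is to show that the five Laplacians appearing in Corollary~\ref{cor:bc-isom-ker} and the preceding Hodge-theoretic discussion all have the \emph{same} kernel as subspaces of $A^{\bullet}(M)_{\C}$, and then to observe that, under the resulting harmonic identifications, every arrow in the diagram becomes the identity of this common subspace.

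First I would invoke Proposition~\ref{prop:kahler-lapl}. The equality $\Delta_{\de_{\phi}} = 2\,\Delta_{\del_{(\theta_{1},\theta_{2})}} = 2\,\Delta_{\delbar_{(\theta_{1},\theta_{2})}}$ immediately gives
$$ \ker\Delta_{\de_{\phi}} \;=\; \ker\Delta_{\del_{(\theta_{1},\theta_{2})}} \;=\; \ker\Delta_{\delbar_{(\theta_{1},\theta_{2})}} \;=:\; \mathcal{H} \;. $$
Since $M$ is compact, $x\in\ker\Delta_{\delbar_{(\theta_{1},\theta_{2})}}$ forces $\delbar_{(\theta_{1},\theta_{2})}x=0$ and $\delbar_{(\theta_{1},\theta_{2})}^{\ast}x=0$; using the coincidence of the two Laplacians it also forces $\del_{(\theta_{1},\theta_{2})}x=0$ and $\del_{(\theta_{1},\theta_{2})}^{\ast}x=0$. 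Thus every element of $\mathcal{H}$ is simultaneously closed and co-closed for both $\del_{(\theta_{1},\theta_{2})}$ and $\delbar_{(\theta_{1},\theta_{2})}$.

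Next I would analyze the Bott--Chern and Aeppli Laplacians via positivity. By Proposition~\ref{prop:kahler-lapl},
$$ \Delta_{BC,\del_{(\theta_{1},\theta_{2})},\delbar_{(\theta_{1},\theta_{2})}} \;=\; \Delta_{\delbar_{(\theta_{1},\theta_{2})}}^{2} + \del_{(\theta_{1},\theta_{2})}^{\ast}\del_{(\theta_{1},\theta_{2})} + \delbar_{(\theta_{1},\theta_{2})}^{\ast}\delbar_{(\theta_{1},\theta_{2})} $$
is a sum of three positive-semidefinite operators (a square of a self-adjoint operator, and two operators of the form $T^{\ast}T$). Hence $\Delta_{BC}x=0$ holds if and only if each term contributes zero to the inner product $(\,\cdot\,,x)$, i.e. if and only if $\Delta_{\delbar_{(\theta_{1},\theta_{2})}}x=0$, $\del_{(\theta_{1},\theta_{2})}x=0$, and $\delbar_{(\theta_{1},\theta_{2})}x=0$. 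The last two conditions are subsumed by the first (since $\ker\Delta_{\delbar_{(\theta_{1},\theta_{2})}}=\ker\Delta_{\del_{(\theta_{1},\theta_{2})}}\subset\ker\del_{(\theta_{1},\theta_{2})}\cap\ker\delbar_{(\theta_{1},\theta_{2})}$), so $\ker\Delta_{BC,\del_{(\theta_{1},\theta_{2})},\delbar_{(\theta_{1},\theta_{2})}}=\mathcal{H}$. The identical positivity argument applied to the Aeppli expression $\Delta_{A,\del_{(\theta_{1},\theta_{2})},\delbar_{(\theta_{1},\theta_{2})}} = \Delta_{\delbar_{(\theta_{1},\theta_{2})}}^{2} + \del_{(\theta_{1},\theta_{2})}\del_{(\theta_{1},\theta_{2})}^{\ast}+\delbar_{(\theta_{1},\theta_{2})}\delbar_{(\theta_{1},\theta_{2})}^{\ast}$ yields $\ker\Delta_{A,\del_{(\theta_{1},\theta_{2})},\delbar_{(\theta_{1},\theta_{2})}}=\mathcal{H}$ as well. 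Thus all five harmonic spaces agree.

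Finally I would combine this with Corollary~\ref{cor:bc-isom-ker} and the Hodge-theoretic isomorphisms recalled before it: each of the five cohomologies is canonically isomorphic, via harmonic projection, to the kernel of its own Laplacian, and all these kernels equal the single space $\mathcal{H}$. It remains to see that the natural maps in the diagram, all induced by the identity of $A^{\bullet}(M)_{\C}$, become the identity of $\mathcal{H}$ under these identifications: a harmonic representative of a class for any one of the complexes lies in $\mathcal{H}$ and hence is simultaneously the harmonic representative of its image for every other complex, so each arrow sends harmonic representatives to harmonic representatives bijectively. This makes every map in the diagram an isomorphism. I expect the only delicate point to be precisely this compatibility between the natural maps and the harmonic identifications—together with the bookkeeping showing that the closedness conditions defining $\ker\Delta_{BC}$ and $\ker\Delta_{A}$ are absorbed into the single condition $\Delta_{\delbar_{(\theta_{1},\theta_{2})}}x=0$; the positivity and kernel computations themselves are routine.
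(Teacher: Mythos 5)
Your proof is correct, and it follows essentially the same route as the paper: the paper deduces Corollary \ref{cor:hodge-dec} directly from Proposition \ref{prop:kahler-lapl} (equality of all the Laplacian kernels) together with the harmonic-space isomorphisms of \cite[page 22]{Sim} and Corollary \ref{cor:bc-isom-ker}, which is exactly your argument. One remark worth making: your final step, checking that the natural maps induced by the identity carry harmonic representatives to harmonic representatives and hence restrict to the identity of the common space $\mathcal{H}$, proves more than the paper's Corollary \ref{cor:hodge-dec} asks for --- it directly establishes that the \emph{natural} maps are isomorphisms, which is the content of the later Corollary \ref{thm:kahler-hodge-tw}. The paper instead obtains that stronger statement by a separate argument, combining the orthogonality proof of Theorem \ref{thm:kahler-deldelbar-tw} with \cite[Lemma 5.15]{deligne-griffiths-morgan-sullivan} and a dimension count based on Corollary \ref{cor:hodge-dec}; your harmonic-representative argument bypasses that machinery at the cost of the explicit injectivity/surjectivity bookkeeping you carry out.
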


Since the isomorphisms in \cite[page 22]{Sim} and Corollary \ref{cor:bc-isom-ker} depend on the K\"ahler metric, also the isomorphisms in Corollary \ref{cor:hodge-dec}, {\itshape a priori}, depend on the K\"ahler metric. In fact, the following result holds, analogous to the $\del\delbar$-Lemma.

\begin{thm}\label{thm:kahler-deldelbar-tw}
 Let $\left( M ,\, J \right)$ be a compact complex manifold endowed with a K\"ahler metric $g$.
 Take $\theta_{1}, \theta_{2} \in H^{1,0}_{BC}(M)$.
 Then the natural map
 \begin{eqnarray*}
  \lefteqn{H^{\bullet}\left(A^{\bullet}(M)_{\C};\, \del_{(\theta_{1},\theta_{2})},\, \delbar_{(\theta_{1},\theta_{2})};\, \del_{(\theta_{1},\theta_{2})}\delbar_{(\theta_{1},\theta_{2})}\right)} \\[5pt]
  && \to H^{\bullet}\left(A^{\bullet}(M)_{\C};\, \del_{(\theta_{1},\theta_{2})}\delbar_{(\theta_{1},\theta_{2})};\, \del_{(\theta_{1},\theta_{2})},\, \delbar_{(\theta_{1},\theta_{2})}\right)
 \end{eqnarray*}
 induced by the identity is injective.
\end{thm}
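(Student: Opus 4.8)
The plan is to deduce the injectivity from the coincidence of the relevant spaces of harmonic forms on the K\"ahler manifold, which is in turn an immediate consequence of the Laplacian identities in Proposition \ref{prop:kahler-lapl}. Throughout I abbreviate $\del:=\del_{(\theta_{1},\theta_{2})}$ and $\delbar:=\delbar_{(\theta_{1},\theta_{2})}$, and write $\Delta_{BC}$, $\Delta_{A}$, $\Delta_{\del}$, $\Delta_{\delbar}$ for the corresponding Laplacians. By Corollary \ref{cor:bc-isom-ker}, every Bott-Chern, respectively Aeppli, class is represented by a unique $\Delta_{BC}$-, respectively $\Delta_{A}$-, harmonic form, so it suffices to compare these harmonic spaces.

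First I would prove that
$$ \ker\Delta_{BC} \;=\; \ker\Delta_{\delbar} \;=\; \ker\Delta_{A} \;. $$
Pairing $\Delta_{BC}\alpha$ with $\alpha$ and using $\Delta_{BC}=\Delta_{\delbar}^2+\del^\ast\del+\delbar^\ast\delbar$ from Proposition \ref{prop:kahler-lapl} together with the self-adjointness of $\Delta_{\delbar}$ gives
$$ \left(\Delta_{BC}\alpha,\,\alpha\right) \;=\; \left(\Delta_{\delbar}\alpha,\,\Delta_{\delbar}\alpha\right)+\left(\del\alpha,\,\del\alpha\right)+\left(\delbar\alpha,\,\delbar\alpha\right) \;, $$
a sum of non-negative terms; hence $\Delta_{BC}\alpha=0$ forces $\Delta_{\delbar}\alpha=0$. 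Conversely, since $\Delta_{\del}=\Delta_{\delbar}$ on a K\"ahler manifold, any $\Delta_{\delbar}$-harmonic $\alpha$ satisfies $\del\alpha=\del^\ast\alpha=\delbar\alpha=\delbar^\ast\alpha=0$, so the right-hand side vanishes and $\Delta_{BC}\alpha=0$. The equality $\ker\Delta_{A}=\ker\Delta_{\delbar}$ is obtained in exactly the same way from $\Delta_{A}=\Delta_{\delbar}^2+\del\del^\ast+\delbar\delbar^\ast$.

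Next I would record the orthogonality $\ker\Delta_{A}\perp\left(\imm\del+\imm\delbar\right)$: if $\alpha$ is $\Delta_{A}$-harmonic then $\del^\ast\alpha=\delbar^\ast\alpha=0$, whence $\left(\alpha,\,\del\beta\right)=\left(\del^\ast\alpha,\,\beta\right)=0$ and likewise $\left(\alpha,\,\delbar\gamma\right)=0$. To conclude, I would take a Bott-Chern class mapping to zero in Aeppli cohomology and represent it by its $\Delta_{BC}$-harmonic form $\alpha$. By the first step $\alpha\in\ker\Delta_{BC}=\ker\Delta_{A}$, while by hypothesis $\alpha\in\imm\del+\imm\delbar$, so the orthogonality forces $\left(\alpha,\,\alpha\right)=0$, that is $\alpha=0$, and the class was already trivial in Bott-Chern cohomology. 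The only genuine content is the coincidence of the harmonic spaces, which is immediate from Proposition \ref{prop:kahler-lapl}; everything else is formal Hodge theory. Alternatively, one may simply observe that $\iota_{BC,A}$ is a composition of the identity-induced isomorphisms of Corollary \ref{cor:hodge-dec}, and is therefore injective.
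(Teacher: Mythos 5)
Your main argument is correct and follows essentially the same route as the paper: both proofs rest on Proposition \ref{prop:kahler-lapl} and Corollary \ref{cor:bc-isom-ker}, the key point being that on a compact K\"ahler manifold the kernels of the twisted Bott--Chern and Aeppli Laplacians coincide (both equal $\ker\Delta_{\delbar_{(\theta_1,\theta_2)}}$) and that this common kernel is orthogonal to $\imm\del_{(\theta_1,\theta_2)}+\imm\delbar_{(\theta_1,\theta_2)}$. The only organizational difference is that you run the argument on the $\Delta_{BC}$-harmonic representative of a Bott--Chern class and conclude $\alpha\perp\alpha$, hence $\alpha=0$, whereas the paper takes an arbitrary form $\alpha\in\ker\del_{(\theta_1,\theta_2)}\cap\ker\delbar_{(\theta_1,\theta_2)}\cap\left(\imm\del_{(\theta_1,\theta_2)}+\imm\delbar_{(\theta_1,\theta_2)}\right)$ and pushes it through $\imm\Delta_{A}\perp\ker\Delta_{A}=\ker\Delta_{BC}$ and then through the finer orthogonal splitting $\imm\Delta_{BC}=\imm\del_{(\theta_1,\theta_2)}\delbar_{(\theta_1,\theta_2)}\stackrel{\perp}{\oplus}\left(\imm\del_{(\theta_1,\theta_2)}^{\ast}+\imm\delbar_{(\theta_1,\theta_2)}^{\ast}\right)$ to land in $\imm\del_{(\theta_1,\theta_2)}\delbar_{(\theta_1,\theta_2)}$; your version avoids that finer splitting, at the cost of using the harmonic representation of classes from Corollary \ref{cor:bc-isom-ker}. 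Either way the content is identical.

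You should, however, delete your closing ``alternative'' argument: it is circular. Corollary \ref{cor:hodge-dec} produces isomorphisms only through the coincidence of the harmonic spaces; these are metric-dependent and are \emph{not} asserted to be induced by the identity --- the paper says exactly this immediately before stating Theorem \ref{thm:kahler-deldelbar-tw} (the isomorphisms of Corollary \ref{cor:hodge-dec} ``{\itshape a priori} depend on the K\"ahler metric''). The statement that the identity-induced maps are isomorphisms is Corollary \ref{thm:kahler-hodge-tw}, which the paper deduces \emph{from} Theorem \ref{thm:kahler-deldelbar-tw}, via \cite[Lemma 5.15]{deligne-griffiths-morgan-sullivan} and a dimension count against Corollary \ref{cor:hodge-dec}; invoking it here would assume the very statement being proved.
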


\begin{proof}
 We detail the proof, which follows the argument in \cite[pages 266--267]{deligne-griffiths-morgan-sullivan}.

 We prove that
 $$ \ker \del_{(\theta_{1},\theta_{2})} \cap \ker \delbar_{(\theta_{1},\theta_{2})} \cap \left( \imm \del_{(\theta_1,\theta_2)} + \imm \delbar_{(\theta_1,\theta_2)} \right) \;=\; \imm \del_{(\theta_{1},\theta_{2})}\delbar_{(\theta_{1},\theta_{2})} \;. $$
 Consider $\alpha = \del_{(\theta_1,\theta_2)}\beta+\delbar_{(\theta_1,\theta_2)}\gamma \in A^k(M)_\C$ such that $\del_{(\theta_{1},\theta_{2})}\alpha = \delbar_{(\theta_{1},\theta_{2})}\alpha = 0$, where $\beta\in A^{k-1}(M)_\C$ and $\gamma\in A^{k-1}(M)_\C$.
 
 Fix a Hermitian metric $g$.
 By Corollary \ref{cor:bc-isom-ker} and Proposition \ref{prop:kahler-lapl}, one has
 \begin{eqnarray*}
  \alpha &\in& \imm\del_{(\theta_1,\theta_2)}+\imm\delbar_{(\theta_1,\theta_2)} \;\subseteq\; \imm\Delta_{A,\del_{(\theta_1,\theta_2)},\delbar_{(\theta_1,\theta_2)}} \\[5pt]
  &\perp& \ker\Delta_{A,\del_{(\theta_1,\theta_2)},\delbar_{(\theta_1,\theta_2)}} \;=\; \ker\Delta_{BC,\del_{(\theta_1,\theta_2)},\delbar_{(\theta_1,\theta_2)}} \;,
 \end{eqnarray*}
 therefore, again by Corollary \ref{cor:bc-isom-ker}, one has
 $$ \alpha \;\in\; \imm\Delta_{BC,\del_{(\theta_1,\theta_2)},\delbar_{(\theta_1,\theta_2)}} \;=\; \imm\del_{(\theta_1,\theta_2)}\delbar_{(\theta_1,\theta_2)} \stackrel{\perp}{\oplus} \left( \imm\del_{(\theta_1,\theta_2)}^\ast + \imm\delbar_{(\theta_1,\theta_2)}^\ast \right) \;. $$
  
 Since $\del_{(\theta_{1},\theta_{2})}\alpha=0$, then $\alpha\perp\imm\del_{(\theta_{1},\theta_{2})}^\ast$. Since $\delbar_{(\theta_{1},\theta_{2})}\alpha=0$, then $\alpha\perp\imm\delbar_{(\theta_{1},\theta_{2})}^\ast$. Hence $\alpha\in\imm\del_{(\theta_{1},\theta_{2})}\delbar_{(\theta_{1},\theta_{2})}$. This concludes the proof.
\end{proof}

\begin{cor}\label{thm:kahler-hodge-tw}
 Let $\left( M ,\, J \right)$ be a compact complex manifold endowed with a K\"ahler metric.
 Take $[\phi]\in H^1(M;\C)$.
 Consider $\theta_{1}, \theta_{2} \in H^{1,0}_{BC}(M)$ such that $\phi = \theta_1+\overline\theta_1 + \theta_2-\overline\theta_2$.
 Then the natural maps
 $$ \xymatrix{
 & H^{\bullet}\left(A^{\bullet}(M)_{\C};\, \del_{(\theta_{1},\theta_{2})},\, \delbar_{(\theta_{1},\theta_{2})};\, \del_{(\theta_{1},\theta_{2})}\delbar_{(\theta_{1},\theta_{2})}\right) \ar[ld]_{\iota_{BC,\del}} \ar[d]_{\iota_{BC,\de}} \ar[rd]^{\iota_{BC,\delbar}} \ar@/_5pc/[dd]_{\iota_{BC,A}} & \\
 H^{\bullet}\left(A^{\bullet}(M)_{\C};\, \del_{(\theta_{1},\theta_{2})}\right) \ar[rd]_{\iota_{\del,A}} & H^{\bullet}\left(A^{\bullet}(M)_{\C};\, \de_{\phi}\right) \ar[d]^{\iota_{\de,A}} & H^{\bullet}\left(A^{\bullet}(M)_{\C};\, \delbar_{(\theta_{1},\theta_{2})}\right) \ar[ld]^{\iota_{\delbar,A}} \\
 & H^{\bullet}\left(A^{\bullet}(M)_{\C};\, \del_{(\theta_{1},\theta_{2})}\delbar_{(\theta_{1},\theta_{2})};\, \del_{(\theta_{1},\theta_{2})},\, \delbar_{(\theta_{1},\theta_{2})}\right) &
 } $$
 induced by the identity are isomorphisms.
\end{cor}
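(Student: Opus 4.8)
The plan is to upgrade the metric-dependent isomorphisms already recorded in Corollary~\ref{cor:hodge-dec} to the sharper statement that the \emph{identity-induced} arrows are isomorphisms, by combining a degreewise dimension count with the injectivity furnished by Theorem~\ref{thm:kahler-deldelbar-tw}. The structural fact I would exploit is that all seven arrows in the diagram are induced by the identity on forms, so they factor compatibly: $\iota_{BC,A}=\iota_{\del,A}\circ\iota_{BC,\del}=\iota_{\de,A}\circ\iota_{BC,\de}=\iota_{\delbar,A}\circ\iota_{BC,\delbar}$. Once $\iota_{BC,A}$ is understood, each middle vertex can be treated by one of these factorizations.

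First I would observe that on the Kähler manifold all five Laplacians share the same kernel. From Proposition~\ref{prop:kahler-lapl} we have $\Delta_{\de_\phi}=2\,\Delta_{\del_{(\theta_1,\theta_2)}}=2\,\Delta_{\delbar_{(\theta_1,\theta_2)}}$, so the de Rham, $\del_{(\theta_1,\theta_2)}$-, and $\delbar_{(\theta_1,\theta_2)}$-harmonic spaces already coincide. For the remaining two, the formulas of Proposition~\ref{prop:kahler-lapl} write each of $\Delta_{BC}$ and $\Delta_{A}$ as $\Delta_{\delbar_{(\theta_1,\theta_2)}}^2$ plus manifestly nonnegative self-adjoint summands; pairing a kernel element with itself and using self-adjointness of $\Delta_{\delbar_{(\theta_1,\theta_2)}}$ forces $\Delta_{\delbar_{(\theta_1,\theta_2)}}$ (as well as the relevant first-order operators) to annihilate it, so that $\ker\Delta_{BC}=\ker\Delta_{A}=\ker\Delta_{\delbar_{(\theta_1,\theta_2)}}$. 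Through the harmonic isomorphisms of \cite[page~22]{Sim} and Corollary~\ref{cor:bc-isom-ker}, this gives, in each degree, the dimension equalities $\dim_\C H_{BC}=\dim_\C H_{\del}=\dim_\C H_{\de_\phi}=\dim_\C H_{\delbar}=\dim_\C H_{A}$, writing $H_{BC},\dots,H_{A}$ for the five cohomologies with the obvious abbreviations.

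Next, Theorem~\ref{thm:kahler-deldelbar-tw} gives that $\iota_{BC,A}$ is injective; as an injective map between finite-dimensional spaces of equal dimension it is an isomorphism. I would then run the factorization argument at each middle vertex: from $\iota_{BC,A}=\iota_{\mathrm{mid},A}\circ\iota_{BC,\mathrm{mid}}$, injectivity of the composite forces $\iota_{BC,\mathrm{mid}}$ to be injective, hence an isomorphism by the equal-dimension count, while surjectivity of the composite forces $\iota_{\mathrm{mid},A}$ to be surjective, hence likewise an isomorphism. Applying this successively at the $\del_{(\theta_1,\theta_2)}$-, $\de_\phi$-, and $\delbar_{(\theta_1,\theta_2)}$-vertices shows that all six remaining arrows are isomorphisms, completing the proof. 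I would emphasize that this argument never uses a double-complex structure, which is essential since, as noted in the remark, $(A^\bullet(M)_\C,\del_{(\theta_1,\theta_2)},\delbar_{(\theta_1,\theta_2)})$ need not be a double complex when $\theta_1\neq0$.

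The genuine content of the statement has already been isolated in the two ingredients I am invoking: the coincidence of Laplacians (Proposition~\ref{prop:kahler-lapl}), where the Kähler identities for twisted differentials do the work, and the $\del_{(\theta_1,\theta_2)}\delbar_{(\theta_1,\theta_2)}$-Lemma (Theorem~\ref{thm:kahler-deldelbar-tw}). Consequently I expect no serious obstacle here; the only points requiring care are the short self-adjointness computation identifying $\ker\Delta_{BC}$ and $\ker\Delta_{A}$ with $\ker\Delta_{\delbar_{(\theta_1,\theta_2)}}$, and the bookkeeping verifying that every arrow is literally induced by the identity, so that the factorizations of $\iota_{BC,A}$ used above are valid.
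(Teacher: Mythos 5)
Your proposal is correct, and it rests on exactly the two pillars the paper uses: the coincidence of all the twisted harmonic spaces coming from Proposition~\ref{prop:kahler-lapl} (this is precisely the content of Corollary~\ref{cor:hodge-dec}), and the injectivity of $\iota_{BC,A}$ from Theorem~\ref{thm:kahler-deldelbar-tw}. Where you genuinely diverge is in how the remaining six arrows are handled. The paper cites \cite[Lemma 5.15]{deligne-griffiths-morgan-sullivan} (see also \cite[Lemma 1.4]{angella-tomassini-5}), which derives, purely formally from the $\del_{(\theta_1,\theta_2)}\delbar_{(\theta_1,\theta_2)}$-Lemma, the injectivity of $\iota_{BC,\del}$, $\iota_{BC,\de}$, $\iota_{BC,\delbar}$ and the surjectivity of $\iota_{\del,A}$, $\iota_{\de,A}$, $\iota_{\delbar,A}$, and only then invokes the dimension count of Corollary~\ref{cor:hodge-dec} to upgrade everything to isomorphisms. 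You instead first upgrade $\iota_{BC,A}$ itself to an isomorphism by the dimension count, and then propagate injectivity and surjectivity through the factorizations $\iota_{BC,A}=\iota_{\mathrm{mid},A}\circ\iota_{BC,\mathrm{mid}}$, which are valid because every arrow is induced by the identity on forms. Your route is more self-contained, since it needs no formal machinery beyond linear algebra; what the paper's route buys is that the injectivity/surjectivity statements are isolated as consequences of the twisted $\del\delbar$-Lemma alone, valid on any compact complex manifold and not just K\"ahler ones (this is recorded later as Lemma~\ref{lemma:deldelbar-equiv-def}), with the K\"ahler hypothesis entering only through the final dimension count. One small point of care in your write-up: the pairing argument gives only the inclusions $\ker\Delta_{BC}\subseteq\ker\Delta_{\delbar_{(\theta_1,\theta_2)}}$ and $\ker\Delta_{A}\subseteq\ker\Delta_{\delbar_{(\theta_1,\theta_2)}}$; the reverse inclusions use $\Delta_{\del_{(\theta_1,\theta_2)}}=\Delta_{\delbar_{(\theta_1,\theta_2)}}$ to see that a $\Delta_{\delbar_{(\theta_1,\theta_2)}}$-harmonic form is annihilated by $\del_{(\theta_1,\theta_2)}$, $\delbar_{(\theta_1,\theta_2)}$, and their adjoints, hence by every summand of $\Delta_{BC}$ and $\Delta_{A}$ --- routine, as you indicate, but worth writing out.
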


\begin{proof}
 By \cite[Lemma 5.15]{deligne-griffiths-morgan-sullivan}, see also \cite[Lemma 1.4]{angella-tomassini-5}, the maps $\iota_{BC,A}$, $\iota_{BC,\del}$, $\iota_{BC,\delbar}$, and $\iota_{BC,\de}$ are injective, and the maps $\iota_{BC,A}$, $\iota_{\del,A}$, $\iota_{\delbar,A}$, and $\iota_{\de,A}$ are surjective. 
 By Corollary \ref{cor:hodge-dec}, they are in fact isomorphisms, being either injective or surjective maps between finite-dimensional vector spaces of the same dimension.
\end{proof}

\subsection{Homologies with twisted differentials}

Consider the space $D^{\bullet}(M)_{\C}=D^{\bullet,\bullet}(M)$ of currents, where we denote by $D^{p}(M)_{\C}$ the space of complex $(\dim_{\R}M-p)$-dimensional currents.
Then we also have the bi-differential ($\Z$-graded) algebra
\[ \left( D^{\bullet}(M)_{\C},\, \del_{(\theta_{1},\theta_{2})},\, \delbar_{(\theta_{1},\theta_{2})} \right) \;. \]
We consider the inclusion
\[ T\colon A^{\bullet}(M)_{\C} \to D^{\bullet}(M)_{\C} \;, \qquad T_{\eta} \;:=\; \int_{M}\eta\wedge \sspace \;. \]

Then we have the following result.
\begin{thm}
 Let $\left( M ,\, J \right)$ be a compact complex manifold endowed with a Hermitian metric $g$. 
 Take $\theta_{1}, \theta_{2} \in H^{1,0}_{BC}(M)$, and $\phi:=\theta_{1}+\overline{\theta_{1}}+\theta_{2}-\overline{\theta_{2}}$.
 The inclusion $T \colon A^{\bullet}(M)_{\C}\to D^{\bullet}(M)_{\C}$ induces the cohomology  isomorphisms
\begin{eqnarray*}
 H^{\bullet}\left(A^{\bullet}(M)_{\C};\, \de_{\phi}\right) & \stackrel{\simeq}{\to} & H^{\bullet}\left(D^{\bullet}(M)_{\C};\, \de_{\phi}\right) \;, \\[5pt]
 H^{\bullet}\left(A^{\bullet}(M)_{\C};\, \del_{(\theta_{1},\theta_{2})}\right) & \stackrel{\simeq}{\to} & H^{\bullet}\left(D^{\bullet}(M)_{\C};\, \del_{(\theta_{1},\theta_{2})}\right) \;, \\[5pt]
 H^{\bullet}\left(A^{\bullet}(M)_{\C};\, \delbar_{(\theta_{1},\theta_{2})}\right) & \stackrel{\simeq}{\to} & H^{\bullet}\left(D^{\bullet}(M)_{\C};\, \delbar_{(\theta_{1},\theta_{2})}\right) \;, \\[5pt]
 H^{\bullet}\left(A^{\bullet}(M)_{\C};\, \del_{(\theta_{1},\theta_{2})},\, \delbar_{(\theta_{1},\theta_{2})};\, \del_{(\theta_{1},\theta_{2})}\delbar_{(\theta_{1},\theta_{2})}\right) & \stackrel{\simeq}{\to} & H^{\bullet}\left(D^{\bullet}(M)_{\C};\, \del_{(\theta_{1},\theta_{2})},\, \delbar_{(\theta_{1},\theta_{2})};\, \del_{(\theta_{1},\theta_{2})}\delbar_{(\theta_{1},\theta_{2})}\right) \;, \\[5pt]
 H^{\bullet}\left(A^{\bullet}(M)_{\C};\, \del_{(\theta_{1},\theta_{2})}\delbar_{(\theta_{1},\theta_{2})};\, \del_{(\theta_{1},\theta_{2})},\, \delbar_{(\theta_{1},\theta_{2})}\right) & \stackrel{\simeq}{\to} & H^{\bullet}\left(D^{\bullet}(M)_{\C};\, \del_{(\theta_{1},\theta_{2})}\delbar_{(\theta_{1},\theta_{2})};\, \del_{(\theta_{1},\theta_{2})},\, \delbar_{(\theta_{1},\theta_{2})}\right) \;. \\[5pt]
\end{eqnarray*}
\end{thm}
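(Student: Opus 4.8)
The plan is to reduce everything to elliptic regularity, in exactly the way one proves classically that the de Rham (resp. Dolbeault, Bott--Chern, Aeppli) cohomology computed by currents agrees with the one computed by smooth forms. The twisting by $(\theta_1,\theta_2)$ enters only through lower-order terms, so it does not affect any of the analytic input; the whole elliptic package transports verbatim.

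First I would verify that $T$ is a morphism of bi-differential graded modules. Defining the twisted differentials on currents by transposition, that is $(\del_{(\theta_1,\theta_2)}S)(\eta):=\pm S(\del_{(\theta_1,\theta_2)}\eta)$ and similarly for $\delbar_{(\theta_1,\theta_2)}$ and $\de_\phi$, Stokes' theorem together with the Leibniz rules $[\del_{(\theta_1,\theta_2)},L_\alpha]=L_{\del\alpha}$ and $[\delbar_{(\theta_1,\theta_2)},L_\alpha]=L_{\delbar\alpha}$ shows that $T$ commutes with $\del_{(\theta_1,\theta_2)}$, $\delbar_{(\theta_1,\theta_2)}$, and $\de_\phi$. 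Hence $T$ descends to all five cohomologies, and it suffices to prove that each induced map is an isomorphism.

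Next I would transport the Hodge-theoretic statements of the previous subsections from forms to currents. The Laplacians $\Delta_{\de_\phi}$, $\Delta_{\del_{(\theta_1,\theta_2)}}$, $\Delta_{\delbar_{(\theta_1,\theta_2)}}$, $\Delta_{BC,\del_{(\theta_1,\theta_2)},\delbar_{(\theta_1,\theta_2)}}$, and $\Delta_{A,\del_{(\theta_1,\theta_2)},\delbar_{(\theta_1,\theta_2)}}$ extend to currents as the same (transposed) differential operators, and, since their principal parts agree with the untwisted ones, they remain self-adjoint elliptic operators of order $2$, resp.\ $4$. The elliptic theory on the space of currents then yields, for each of them, the orthogonal Hodge decomposition $D^\bullet(M)_\C=\ker\Delta|_{D^\bullet}\oplus\imm\Delta|_{D^\bullet}$ and the identification of the corresponding current-cohomology with the space of harmonic currents, exactly as in Corollary \ref{cor:bc-isom-ker} and the isomorphisms recorded there for forms. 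Because $\Delta$ is a differential operator with smooth coefficients, $T$ intertwines the two versions, $T\circ\Delta=\Delta\circ T$, so $T$ carries harmonic forms to harmonic currents.

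The crux is elliptic regularity: every current killed by an elliptic Laplacian is in fact a smooth form, whence $\ker\Delta|_{D^\bullet}=T\bigl(\ker\Delta|_{A^\bullet}\bigr)$ for each of the five Laplacians. Combining this with the two Hodge decompositions, the square expressing $T$ on harmonic representatives commutes and its horizontal arrow is a bijection, so $T$ induces an isomorphism in each case. I expect the main obstacle to lie in the Bott--Chern and Aeppli cohomologies, where one must invoke the fourth-order elliptic theory (in the sense of Schweitzer and Kodaira--Spencer already used above) both to obtain the Hodge decomposition on currents and to phrase the regularity statement, and to check that the harmonic-current representatives are compatible with $T$; the de Rham and Dolbeault cases are the classical quasi-isomorphisms for the twisted complexes and present no additional difficulty.
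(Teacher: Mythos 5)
Your proposal is correct and follows essentially the same route as the paper: both observe that the twisted Laplacians have the same principal parts as the untwisted ones, hence remain self-adjoint elliptic operators of order $2$ resp.\ $4$, extend the harmonic theory to currents by transposition, and use elliptic regularity to identify the kernel of $\Delta$ on currents with its kernel on smooth forms, whence the isomorphisms. The only point to tighten is your phrase ``orthogonal Hodge decomposition'' on $D^{\bullet}(M)_{\C}$, which carries no inner product: the paper instead obtains the decomposition $D^{\bullet}(M)_{\C}=\ker\Delta\lfloor_{A^{\bullet}(M)_{\C}}\oplus\Delta\left(D^{\bullet}(M)_{\C}\right)$ by transposing the Green operator $G$ and the harmonic projector $H$ (so that the identity $\Delta\circ G+H=\id$ persists on currents) and then proving directness of the sum by a further regularity argument.
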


\begin{proof}
Consider each case. For a fixed Hermitian metric, consider the corresponding Laplacian operator $\Delta$.
Then, by \cite[Theorem 4.12]{wells-book}, we have the operators
$$ G \colon A^{\bullet}(M)_{\C}\to A^{\bullet}(M)_{\C} \quad \text{ and } \quad H \colon A^{\bullet}(M)_{\C}\to A^{\bullet}(M)_{\C} \;, $$
where $H$ is given by the projection $A^{\bullet}(M)_{\C}\to {\ker} \Delta$ and $G$ is given by the inverse of the restriction of $\Delta $ on $A^{\bullet}(M)_{\C}\cap ({\ker}\, \Delta)^{\perp}$, such that
\[\Delta \circ G+H \;=\; G\circ \Delta+H \;=\;{\id} \;.
\]
Since $\Delta$ is self-adjoint, $G$ and $H$ are also self-adjoint.
We can define the operators $\Delta$, $G$, and $H$ on $D^{\bullet}(M)_{\C}$.
They still satisfy $\Delta \circ G+H = G\circ \Delta+H = {\id}$.
By the regularity of the kernel of  elliptic differential operators in Sobolev spaces, see, e.g., \cite[Theorem 4.8]{wells-book}, we have 
\[{\ker} \Delta\lfloor_{A^{\bullet}(M)_{\C}} \;=\; {\ker} \Delta\lfloor_{D^{\bullet}(M)_{\C}} \;.
\]
Hence we have 
\[D^{\bullet}(M)_{\C}\;=\;{\ker}\Delta\lfloor_{A^{\bullet}(M)_{\C}}+ \Delta\left({D^{\bullet}(M)_{\C}}\right)\;.\]
For $T\in D^{\bullet}(M)_{\C}$ and $h\in {\ker}\Delta\lfloor_{A^{\bullet}(M)_{\C}}$, suppose that $\Delta\lfloor_{D^{\bullet}(M)_{\C}}T=h$.
Then, by \cite[Theorem 4.9]{wells-book}, we have $T\in A^{\bullet}(M)_{\C}$, and hence $h=0$.
Thus we have
\[D^{\bullet}(M)_{\C}\;=\;{\ker}\Delta\lfloor_{A^{\bullet}(M)_{\C}}\oplus\Delta\left({D^{\bullet}(M)_{\C}}\right)\;.\]
This completes the proof.
\end{proof}

\section{Hodge decomposition with twisted differentials and modifications}

Let $f \colon M_{1}\to M_{2}$ be a holomorphic map between compact complex manifolds $M_1$ and $M_2$.
Take $\theta_{1}, \theta_{2} \in H^{1,0}_{BC}(M_{2})$, and $\phi:=\theta_{1}+\overline{\theta_{1}}+\theta_{2}-\overline{\theta_{2}}$.

\subsection{Modifications and cohomologies with twisted differentials}
Consider the pull-back $f^{\ast} \colon A^{\bullet,\bullet}(M_{2})\to A^{\bullet,\bullet}(M_{1})$. We have $f^{\ast} \theta_{1},f^{\ast}\theta_{2}\in H^{1,0}_{BC}(M_{1})$. Since $f^*$ commutes with $\del$ and $\delbar$, then
$$ f^{\ast} \colon \left (A^{\bullet}(M_{2})_{\C},\, \de_{\phi} \right) \to \left( A^{\bullet}(M_{1})_{\C},\, \de_{f^{\ast}\phi} \right) $$
is a morphism of differential $\Z$-graded complexes, and
$$ f^{\ast} \colon \left (A^{\bullet}(M_{2})_{\C},\, \del_{(\theta_{1},\theta_{2})},\, \delbar_{(\theta_{1},\theta_{2})} \right) \to \left( A^{\bullet}(M_{1})_{\C},\, \del_{(f^{\ast}\theta_{1},f^{\ast}\theta_{2})},\, \delbar_{(f^{\ast}\theta_{1},f^{\ast}\theta_{2})} \right) $$
is a morphism of bi-differential $\Z$-graded complexes. In particular, $f$ induces the maps
\begin{eqnarray*}
f^{\ast}_{dR,\phi} &\colon& H^{\bullet}\left(A^{\bullet}(M_2)_{\C};\, \de_{\phi}\right) \to H^{\bullet}\left(A^{\bullet}(M_1)_{\C};\, \de_{f^\ast\phi}\right) \;, \\[5pt]
f^{\ast}_{\delbar,(\theta_1,\theta_2)} &\colon& H^{\bullet}\left(A^{\bullet}(M_2)_{\C};\, \delbar_{(\theta_{1},\theta_{2})}\right) \to H^{\bullet}\left(A^{\bullet}(M_1)_{\C};\, \delbar_{(f^*\theta_{1},f^*\theta_{2})}\right) \;, \\[5pt]
f^{\ast}_{BC,(\theta_1,\theta_2)} &\colon& H^{\bullet}\left(A^{\bullet}(M_2)_{\C};\, \del_{(\theta_{1},\theta_{2})},\, \delbar_{(\theta_{1},\theta_{2})};\, \del_{(\theta_{1},\theta_{2})}\delbar_{(\theta_{1},\theta_{2})}\right) \\[5pt]
&& \to H^{\bullet}\left(A^{\bullet}(M_1)_{\C};\, \del_{(f^\ast\theta_{1},f^\ast\theta_{2})},\, \delbar_{(f^\ast\theta_{1},f^\ast\theta_{2})};\, \del_{(f^\ast\theta_{1},f^\ast\theta_{2})}\delbar_{(f^\ast\theta_{1},f^\ast\theta_{2})}\right) \;, \\[5pt]
f^\ast_{A,(\theta_1,\theta_2)} &\colon& H^{\bullet}\left(A^{\bullet}(M_2)_{\C};\, \del_{(\theta_{1},\theta_{2})}\delbar_{(\theta_{1},\theta_{2})};\, \del_{(\theta_{1},\theta_{2})},\, \delbar_{(\theta_{1},\theta_{2})}\right) \\[5pt]
&& \to H^{\bullet}\left(A^{\bullet}(M_1)_{\C};\, \del_{(f^\ast\theta_{1},f^\ast\theta_{2})}\delbar_{(f^\ast\theta_{1},f^\ast\theta_{2})};\, \del_{(f^\ast\theta_{1},f^\ast\theta_{2})},\, \delbar_{(f^\ast\theta_{1},f^\ast\theta_{2})}\right) \;.
\end{eqnarray*}

\subsection{Modifications and homologies with twisted differentials}
Suppose that $f$ is proper.
Then we have the map $f_{\ast} \colon D^{\bullet,\bullet}(M_{1})\to  D^{\bullet,\bullet}(M_{2})$, called the direct image map, such that $f_{\ast}$ commutes with $\del$ and $\delbar$, and $f_{\ast}\left(f^{\ast}\alpha\wedge C\right) = \alpha\wedge f_\ast C$ for any $\alpha\in A^{\bullet,\bullet}(M_{2})$ and  $C\in D^{\bullet,\bullet}(M_{1})$.
Hence the map
$$ f_{\ast} \colon \left( D^{\bullet}(M_{1})_{\C},\, \de_{f^{\ast}\phi} \right) \to \left( D^{\bullet}(M_{2})_{\C},\, \de_{\phi} \right) $$
is a morphism of differential $\Z$-graded complexes, and the map
$$ f_{\ast} \colon \left( D^{\bullet}(M_{1})_{\C},\, \del_{(f^{\ast}\theta_{1},f^{\ast}\theta_{2})},\, \delbar_{(f^{\ast}\theta_{1},f^{\ast}\theta_{2})} \right) \to \left( D^{\bullet}(M_{2})_{\C},\, \del_{(\theta_{1},\theta_{2})},\, \delbar_{(\theta_{1},\theta_{2})} \right) $$
is a morphism of bi-differential $\Z$-graded complexes.
In particular, $f$ induces the maps
\begin{eqnarray*}
f_{\ast}^{dR,\phi} &\colon& H^{\bullet}\left(D^{\bullet}(M_1)_{\C};\, \de_{f^\ast\phi}\right) \to H^{\bullet}\left(D^{\bullet}(M_2)_{\C};\, \de_{\phi}\right) \;, \\[5pt]
f_{\ast}^{\delbar,(\theta_1,\theta_2)} &\colon& H^{\bullet}\left(D^{\bullet}(M_1)_{\C};\, \delbar_{(f^*\theta_{1},f^*\theta_{2})}\right) \to H^{\bullet}\left(D^{\bullet}(M_2)_{\C};\, \delbar_{(\theta_{1},\theta_{2})}\right) \;, \\[5pt]
f_{\ast}^{BC,(\theta_1,\theta_2)} &\colon& H^{\bullet}\left(D^{\bullet}(M_1)_{\C};\, \del_{(f^\ast\theta_{1},f^\ast\theta_{2})},\, \delbar_{(f^\ast\theta_{1},f^\ast\theta_{2})};\, \del_{(f^\ast\theta_{1},f^\ast\theta_{2})}\delbar_{(f^\ast\theta_{1},f^\ast\theta_{2})}\right) \\[5pt]
&& \to H^{\bullet}\left(D^{\bullet}(M_2)_{\C};\, \del_{(\theta_{1},\theta_{2})},\, \delbar_{(\theta_{1},\theta_{2})};\, \del_{(\theta_{1},\theta_{2})}\delbar_{(\theta_{1},\theta_{2})}\right) \;, \\[5pt]
f_\ast^{A,(\theta_1,\theta_2)} &\colon& H^{\bullet}\left(D^{\bullet}(M_1)_{\C};\, \del_{(f^\ast\theta_{1},f^\ast\theta_{2})}\delbar_{(f^\ast\theta_{1},f^\ast\theta_{2})};\, \del_{(f^\ast\theta_{1},f^\ast\theta_{2})},\, \delbar_{(f^\ast\theta_{1},f^\ast\theta_{2})}\right) \\[5pt]
&& \to H^{\bullet}\left(D^{\bullet}(M_2)_{\C};\, \del_{(\theta_{1},\theta_{2})}\delbar_{(\theta_{1},\theta_{2})};\, \del_{(\theta_{1},\theta_{2})},\, \delbar_{(\theta_{1},\theta_{2})}\right) \;.
\end{eqnarray*}

\subsection{Hodge decomposition and \texorpdfstring{$\del\delbar$-Lemma}{partialoverlinepartial-Lemma} with twisted differentials}

As in \cite{deligne-griffiths-morgan-sullivan}, we consider the following definitions in the case of twisted differentials.

\begin{defi}
Let $\left( M,\, J \right)$ be a compact complex manifold of complex dimension $n$.
For $\theta_{1}, \theta_{2} \in H^{1,0}_{BC}(M)$, and $\phi:=\theta_{1}+\overline{\theta_{1}}+\theta_{2}-\overline{\theta_{2}}$, consider the bi-differential $\Z$-graded complex
\[ \left( A^{\bullet}(M)_{\C},\, \del_{(\theta_{1},\theta_{2})},\, \delbar_{(\theta_{1},\theta_{2})} \right) \;. \]

We say that $\left( M ,\, J \right)$:
\begin{enumerate}
\item satisfies the {\em $\del_{(\theta_{1},\theta_{2})}\delbar_{(\theta_{1},\theta_{2})}$-Lemma} if 
\[ \ker \del_{(\theta_{1},\theta_{2})}\cap \ker \delbar_{(\theta_{1},\theta_{2})}\cap ( \imm \del_{(\theta_{1},\theta_{2})}+\imm \delbar_{(\theta_{1},\theta_{2})}) \;=\; \imm \del_{(\theta_{1},\theta_{2})}\delbar_{(\theta_{1},\theta_{2})} \;, \]
i.e., if the natural map
\begin{eqnarray*}
\lefteqn{H^{\bullet}\left( A^{\bullet}(M)_{\C};\, \del_{(\theta_{1},\theta_{2})},\, \delbar_{(\theta_{1},\theta_{2})};\, \del_{(\theta_{1},\theta_{2})}\delbar_{(\theta_{1},\theta_{2})} \right)} \\[5pt]
&& \to H^{\bullet}\left( A^{\bullet}(M)_{\C};\, \del_{(\theta_{1},\theta_{2})} \delbar_{(\theta_{1},\theta_{2})};\, \del_{(\theta_{1},\theta_{2})},\, \delbar_{(\theta_{1},\theta_{2})} \right)
\end{eqnarray*}
induced by the identity is injective;

\item admits the {\em $(\theta_{1}, \theta_{2})$-Hodge decomposition} if the natural maps
 \[ H^{\bullet}\left( A^{\bullet}(M)_{\C};\, \del_{(\theta_{1},\theta_{2})},\, \delbar_{(\theta_{1},\theta_{2})};\, \del_{(\theta_{1},\theta_{2})}\delbar_{(\theta_{1},\theta_{2})} \right) \to H^{\bullet}\left( A^{\bullet}(M)_{\C};\, \de_{\phi}\right) \]
and
 \[ H^{\bullet} \left( A^{\bullet}(M)_{\C};\, \del_{(\theta_{1},\theta_{2})},\, \delbar_{(\theta_{1},\theta_{2})};\, \del_{(\theta_{1},\theta_{2})}\delbar_{(\theta_{1},\theta_{2})} \right) \to H^{\bullet} \left( A^{\bullet}(M)_{\C};\, \del_{(\theta_{1},\theta_{2})} \right) \]  
and
 \[ H^{\bullet} \left( A^{\bullet}(M)_{\C};\, \del_{(\theta_{1},\theta_{2})},\, \delbar_{(\theta_{1},\theta_{2})};\, \del_{(\theta_{1},\theta_{2})}\delbar_{(\theta_{1},\theta_{2})} \right) \to H^{\bullet} \left( A^{\bullet}(M)_{\C};\, \delbar_{(\theta_{1},\theta_{2})} \right) \]
induced by the identity are isomorphisms.
\end{enumerate}
\end{defi}

\medskip

By \cite[Lemma 5.15]{deligne-griffiths-morgan-sullivan}, see also \cite[Lemma 1.4]{angella-tomassini-5}, we have the following equivalent characterizations of $\del_{(\theta_1,\theta_2)}\delbar_{(\theta_1,\theta_2)}$-Lemma. (In case of double complex, a further characterization is proven in \cite{angella-tomassini-3}.)

\begin{lem}\label{lemma:deldelbar-equiv-def}
Let $\left( M,\, J \right)$ be a compact complex manifold of complex dimension $n$.
Take $\theta_{1}, \theta_{2} \in H^{1,0}_{BC}(M)$, and $\phi:=\theta_{1}+\overline{\theta_{1}}+\theta_{2}-\overline{\theta_{2}}$.
The following conditions are equivalent:
 \begin{itemize}
  \item $\left( M,\, J \right)$ satisfies the $\del_{(\theta_{1},\theta_{2})}\delbar_{(\theta_{1},\theta_{2})}$-Lemma, i.e., the natural map
    \begin{eqnarray*}
     \lefteqn{H^{\bullet}\left( A^{\bullet}(M)_{\C};\, \del_{(\theta_{1},\theta_{2})},\, \delbar_{(\theta_{1},\theta_{2})};\, \del_{(\theta_{1},\theta_{2})}\delbar_{(\theta_{1},\theta_{2})} \right)} \\[5pt]
     && \to H^{\bullet}\left( A^{\bullet}(M)_{\C};\, \del_{(\theta_{1},\theta_{2})} \delbar_{(\theta_{1},\theta_{2})};\, \del_{(\theta_{1},\theta_{2})},\, \delbar_{(\theta_{1},\theta_{2})} \right)
    \end{eqnarray*}
    induced by the identity is injective;
  \item the natural map
    \begin{eqnarray*}
     \lefteqn{H^{\bullet}\left( A^{\bullet}(M)_{\C};\, \del_{(\theta_{1},\theta_{2})},\, \delbar_{(\theta_{1},\theta_{2})};\, \del_{(\theta_{1},\theta_{2})}\delbar_{(\theta_{1},\theta_{2})} \right)} \\[5pt]
     && \to H^{\bullet}\left( A^{\bullet}(M)_{\C};\, \del_{(\theta_{1},\theta_{2})} \delbar_{(\theta_{1},\theta_{2})};\, \del_{(\theta_{1},\theta_{2})},\, \delbar_{(\theta_{1},\theta_{2})} \right)
    \end{eqnarray*}
    induced by the identity is an isomorphism;
  \item the natural maps
    \[H^{\bullet}(A^{\bullet}(M)_{\C};\, \del_{(\theta_{1},\theta_{2})},\, \delbar_{(\theta_{1},\theta_{2})};\, \del_{(\theta_{1},\theta_{2})}\delbar_{(\theta_{1},\theta_{2})})\to H^{\bullet}(A^{\bullet}(M)_{\C};\, \del_{(\theta_{1},\theta_{2})}) \]  
    and 
    \[H^{\bullet}(A^{\bullet}(M)_{\C};\, \del_{(\theta_{1},\theta_{2})},\, \delbar_{(\theta_{1},\theta_{2})};\, \del_{(\theta_{1},\theta_{2})}\delbar_{(\theta_{1},\theta_{2})})\to H^{\bullet}(A^{\bullet}(M)_{\C};\, \delbar_{(\theta_{1},\theta_{2})}) \] 
    induced by the identity are injective;
  \item the natural maps
    \[ H^{\bullet}(A^{\bullet}(M)_{\C};\, \del_{(\theta_{1},\theta_{2})}) \to H^{\bullet}(A^{\bullet}(M)_{\C};\, \del_{(\theta_{1},\theta_{2})}\delbar_{(\theta_{1},\theta_{2})};\, \del_{(\theta_{1},\theta_{2})},\, \delbar_{(\theta_{1},\theta_{2})}) \]  
    and 
    \[ H^{\bullet}(A^{\bullet}(M)_{\C};\, \delbar_{(\theta_{1},\theta_{2})}) \to H^{\bullet}(A^{\bullet}(M)_{\C};\, \del_{(\theta_{1},\theta_{2})}\delbar_{(\theta_{1},\theta_{2})};\, \del_{(\theta_{1},\theta_{2})},\, \delbar_{(\theta_{1},\theta_{2})}) \] 
    induced by the identity are surjective.
 \end{itemize}
 Furthermore, they imply the following conditions:
 \begin{itemize}
  \item the natural map
   \[H^{\bullet}(A^{\bullet}(M)_{\C};\, \del_{(\theta_{1},\theta_{2})},\, \delbar_{(\theta_{1},\theta_{2})};\, \del_{(\theta_{1},\theta_{2})}\delbar_{(\theta_{1},\theta_{2})})\to H^{\bullet}(A^{\bullet}(M)_{\C};\, \de_{\phi}) \]
   induced by the identity in injective;
  \item the natural map
   \[ H^{\bullet}(A^{\bullet}(M)_{\C};\, \de_{\phi}) \to H^{\bullet}(A^{\bullet}(M)_{\C};\, \del_{(\theta_{1},\theta_{2})}\delbar_{(\theta_{1},\theta_{2})};\, \del_{(\theta_{1},\theta_{2})},\, \delbar_{(\theta_{1},\theta_{2})}) \]
   induced by the identity in surjective.
 \end{itemize}
\end{lem}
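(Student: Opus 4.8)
The plan is to reduce the statement to a purely algebraic fact about a bi-differential $\Z$-graded complex and then to run the diagram chases of \cite[Lemma 5.15]{deligne-griffiths-morgan-sullivan} and \cite[Lemma 1.4]{angella-tomassini-5}. Write $d':=\del_{(\theta_1,\theta_2)}$, $d'':=\delbar_{(\theta_1,\theta_2)}$, and $D:=\de_\phi=d'+d''$. The only structural input will be the relations $d'^2=d''^2=d'd''+d''d'=0$ recorded above, whence also $(d'd'')^2=0$ and $D^2=0$; in particular all five cohomologies in the diagram are built from the single pair $(d',d'')$ of anticommuting square-zero operators, and every map is induced by the identity. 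First I would record well-definedness together with the factorizations
$$ \iota_{BC,A} \;=\; \iota_{\del,A}\circ\iota_{BC,\del} \;=\; \iota_{\delbar,A}\circ\iota_{BC,\delbar} \;=\; \iota_{dR,A}\circ\iota_{BC,dR} \;, $$
which follow from elementary inclusions such as $\imm d'd''\subseteq\imm d'\cap\imm d''\cap\imm D$, $\imm D\subseteq\imm d'+\imm d''$, and $\ker d',\ker d'',\ker D\subseteq\ker d'd''$.

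Next I would translate each property into an equality of subspaces and chase. Injectivity of $\iota_{BC,A}$ is by definition the $\del_{(\theta_1,\theta_2)}\delbar_{(\theta_1,\theta_2)}$-Lemma,
$$ \ker d'\cap\ker d''\cap(\imm d'+\imm d'') \;=\; \imm d'd'' \;; $$
injectivity of $\iota_{BC,\del}$ (resp.\ $\iota_{BC,\delbar}$) is the one-sided equality $\ker d'\cap\ker d''\cap\imm d'=\imm d'd''$ (resp.\ with $\imm d''$); and surjectivity of $\iota_{\del,A}$ (resp.\ $\iota_{\delbar,A}$) is $\ker d'd''=\ker d'+\imm d''$ (resp.\ $\ker d''+\imm d'$). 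The factorizations already give that injectivity of $\iota_{BC,A}$ forces injectivity of $\iota_{BC,\del}$ and $\iota_{BC,\delbar}$, and that surjectivity of $\iota_{BC,A}$ forces surjectivity of $\iota_{\del,A}$ and $\iota_{\delbar,A}$. For the converse I would use two short chases that rely only on the anticommutation relations: writing $\alpha\in\ker d'\cap\ker d''$ as $\alpha=d'\beta+d''\gamma$, one checks $d'\beta,d''\gamma\in\ker d'\cap\ker d''$ and applies the two one-sided equalities to conclude $\alpha\in\imm d'd''$, which is injectivity of $\iota_{BC,A}$; and a two-step correction argument — given $d'd''x=0$, first subtract an element of $\imm d''$ to make the representative $d'$-closed, then an element of $\imm d'$ to make it $d''$-closed, invoking the one-sided equalities at each step — yields surjectivity of $\iota_{BC,A}$. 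This closes the cycle between injectivity of $\iota_{BC,A}$, its bijectivity, and injectivity of $\iota_{BC,\del},\iota_{BC,\delbar}$; the surjectivity characterizations via $\iota_{\del,A},\iota_{\delbar,A}$ are exchanged with the injectivity ones by the $\overline{\ast}$-duality of Corollary \ref{cor:star-duality} (which sends the pair $(\theta_1,\theta_2)$ to $(-\theta_2,-\theta_1)$ and Bott--Chern to Aeppli) together with finite-dimensionality, the Lemma being self-dual. Finally, the two ``furthermore'' consequences drop out of the factorization $\iota_{BC,A}=\iota_{dR,A}\circ\iota_{BC,dR}$: injectivity of $\iota_{BC,A}$ forces injectivity of $\iota_{BC,dR}$, and surjectivity of $\iota_{BC,A}$ (now available) forces surjectivity of $\iota_{dR,A}$.

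The hard part is not any individual computation but the verification that none of these chases secretly uses the double-complex (bigrading) structure. When $\theta_1\neq0$ the operators $d'$ and $d''$ are not homogeneous of bidegrees $(1,0)$ and $(0,1)$: the summand $L_{\overline{\theta_1}}$ in $d'$ and the summand $L_{\theta_1}$ in $d''$ shift the $(p,q)$-type, so $\left(A^\bullet(M)_\C,d',d''\right)$ is only $\Z$-graded by total degree and is not a double complex, as observed in the Remark. Consequently I must keep every intersection and sum of subspaces homogeneous in the total degree alone and avoid any appeal to a splitting into $(p,q)$-components or to the zigzag decomposition of a bounded double complex. Re-reading the arguments of \cite{deligne-griffiths-morgan-sullivan,angella-tomassini-5} with this constraint confirms that they rest solely on the identities $d'^2=d''^2=d'd''+d''d'=0$, so they transfer verbatim to the twisted setting.
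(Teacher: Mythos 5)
Your reduction to the algebra of two anticommuting square-zero differentials is exactly the intended route: the paper offers no independent proof of this Lemma, but justifies it by citing \cite[Lemma 5.15]{deligne-griffiths-morgan-sullivan} and \cite[Lemma 1.4]{angella-tomassini-5}, whose diagram chases use only $d'^2=d''^2=d'd''+d''d'=0$ (write $d':=\del_{(\theta_1,\theta_2)}$, $d'':=\delbar_{(\theta_1,\theta_2)}$). Your chases closing the cycle between the first three conditions (injectivity of $\iota_{BC,A}$, bijectivity of $\iota_{BC,A}$, injectivity of $\iota_{BC,\del}$ and $\iota_{BC,\delbar}$) are correct, as is the derivation of the two ``furthermore'' statements from the factorization $\iota_{BC,A}=\iota_{dR,A}\circ\iota_{BC,dR}$. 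The genuine gap is in the fourth condition, the surjectivity of $\iota_{\del,A}$ and $\iota_{\delbar,A}$: your factorization argument only gives that bijectivity of $\iota_{BC,A}$ implies it, and for the missing converse you invoke the $\overline{\ast}$-duality of Corollary \ref{cor:star-duality}, ``the Lemma being self-dual''. This does not work as stated, for two reasons. First, Corollary \ref{cor:star-duality} produces metric-dependent isomorphisms between harmonic spaces; by itself it does not intertwine the natural maps $\iota$, so to trade injectivity for surjectivity you would need a nondegenerate pairing on cohomology under which $\iota_{\del,A}$ is the transpose of $\iota_{BC,\del}$, and that descent-and-nondegeneracy statement requires its own verification. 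Second, and more seriously, the duality replaces the pair $(\theta_1,\theta_2)$ by $(-\theta_2,-\theta_1)$: at best you obtain that $\iota_{BC,\del}$ is injective for $(\theta_1,\theta_2)$ if and only if $\iota_{\del,A}$ is surjective for $(-\theta_2,-\theta_1)$ --- a statement about a \emph{different} local system. The claim that the package of conditions is invariant under $(\theta_1,\theta_2)\mapsto(-\theta_2,-\theta_1)$ (``self-dual'') is of the same nature as the Lemma itself, so as written the argument is circular. The discrepancy is not cosmetic: in Example \ref{ex:nakamura} one has $(\theta_1,\theta_2)=\left(\frac{\de z_1}{2},0\right)$, whose dual pair is $\left(0,-\frac{\de z_1}{2}\right)$; by the Remark, the dual complex is an honest double complex while the original one is not.

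The repair is elementary and stays inside the algebra, which is how the cited lemmas proceed. For ``injectivity of $\iota_{BC,A}$ implies surjectivity of $\iota_{\del,A},\iota_{\delbar,A}$'': if $d'd''x=0$, then $d'x\in\ker d'\cap\ker d''\cap\imm d'$ (indeed $d''d'x=-d'd''x=0$), hence $d'x=d'd''z$ by hypothesis; then $x-d''z$ is $d'$-closed and represents the same Aeppli class as $x$, so $\iota_{\del,A}$ is surjective, and symmetrically for $\iota_{\delbar,A}$. For ``surjectivity of $\iota_{\del,A},\iota_{\delbar,A}$ implies injectivity of $\iota_{BC,\del},\iota_{BC,\delbar}$'': if $\omega=d'\beta$ with $d''\omega=0$, then $d'd''\beta=0$, so by hypothesis $\beta=\beta_0+d'a+d''b$ with $d'\beta_0=0$, whence $\omega=d'\beta=d'd''b\in\imm d'd''$, and symmetrically for the other map. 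Both chases use only the anticommutation relations, so your well-founded concern about the absence of a bigrading is resolved in the same way as for the implications you already proved; with this replacement, no metric, duality, or finite-dimensionality enters the proof of the Lemma at all.
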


In particular, we have that admitting the $(\theta_{1}, \theta_{2})$-Hodge decomposition is a stronger condition than satisfying the $\del_{(\theta_{1},\theta_{2})}\delbar_{(\theta_{1},\theta_{2})}$-Lemma. We wonder whether it is stricly stronger, namely, whether there exists an example of a compact complex manifold $M$ satisfying the $\del_{(\theta_{1},\theta_{2})}\delbar_{(\theta_{1},\theta_{2})}$-Lemma but not admitting the $(\theta_{1}, \theta_{2})$-Hodge decomposition, for some $\theta_1,\theta_2\in H^{1,0}_{BC}(M)$.

\medskip

In the K\"ahler case, we can summarize Theorem \ref{thm:kahler-deldelbar-tw} and Theorem \ref{thm:kahler-hodge-tw} in the following.

\begin{cor}\label{cor:kahler-deldelbar-hodge-tw}
 Let $\left( M ,\, J \right)$ be a compact complex manifold endowed with a K\"ahler metric.
 Take $\theta_{1}, \theta_{2} \in H^{1,0}_{BC}(M)$.
 Then $\left( M ,\, J \right)$ satisfies the $\del_{(\theta_1,\theta_2)}\delbar_{(\theta_1,\theta_2)}$-Lemma and admits the $(\theta_1,\theta_2)$-Hodge decomposition.
\end{cor}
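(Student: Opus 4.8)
The plan is to recognize that this corollary is simply the conjunction of the two results already established, namely Theorem~\ref{thm:kahler-deldelbar-tw} and Corollary~\ref{thm:kahler-hodge-tw}; once the statements of those results are matched against the two defining properties, nothing further is needed.

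First I would treat the $\del_{(\theta_1,\theta_2)}\delbar_{(\theta_1,\theta_2)}$-Lemma. By definition, satisfying this Lemma means that the natural map induced by the identity from $H^{\bullet}(A^{\bullet}(M)_{\C};\, \del_{(\theta_1,\theta_2)},\, \delbar_{(\theta_1,\theta_2)};\, \del_{(\theta_1,\theta_2)}\delbar_{(\theta_1,\theta_2)})$ to $H^{\bullet}(A^{\bullet}(M)_{\C};\, \del_{(\theta_1,\theta_2)}\delbar_{(\theta_1,\theta_2)};\, \del_{(\theta_1,\theta_2)},\, \delbar_{(\theta_1,\theta_2)})$ is injective. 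This is precisely the assertion of Theorem~\ref{thm:kahler-deldelbar-tw}, so this half of the statement holds immediately.

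Next I would treat the $(\theta_1,\theta_2)$-Hodge decomposition. Setting $\phi := \theta_1+\overline{\theta_1}+\theta_2-\overline{\theta_2}$, the defining property requires that the three natural maps induced by the identity, from the Bott--Chern-type cohomology $H^{\bullet}(A^{\bullet}(M)_{\C};\, \del_{(\theta_1,\theta_2)},\, \delbar_{(\theta_1,\theta_2)};\, \del_{(\theta_1,\theta_2)}\delbar_{(\theta_1,\theta_2)})$ to each of $H^{\bullet}(A^{\bullet}(M)_{\C};\, \de_{\phi})$, $H^{\bullet}(A^{\bullet}(M)_{\C};\, \del_{(\theta_1,\theta_2)})$, and $H^{\bullet}(A^{\bullet}(M)_{\C};\, \delbar_{(\theta_1,\theta_2)})$, be isomorphisms. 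Applying Corollary~\ref{thm:kahler-hodge-tw} to this $\phi$ (for which, by construction, $\phi = \theta_1+\overline{\theta_1}+\theta_2-\overline{\theta_2}$) yields exactly that the maps $\iota_{BC,\del}$, $\iota_{BC,\de}$, $\iota_{BC,\delbar}$ are isomorphisms.

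As both defining properties are thereby verified, the corollary follows. There is no genuine obstacle in this argument: the only point requiring attention is the purely notational one of identifying the maps named in Theorem~\ref{thm:kahler-deldelbar-tw} and Corollary~\ref{thm:kahler-hodge-tw} with those appearing in the definitions of the $\del_{(\theta_1,\theta_2)}\delbar_{(\theta_1,\theta_2)}$-Lemma and of the $(\theta_1,\theta_2)$-Hodge decomposition, together with the remark that the pair $\theta_1,\theta_2 \in H^{1,0}_{BC}(M)$ canonically determines the class $\phi$ needed to invoke Corollary~\ref{thm:kahler-hodge-tw}.
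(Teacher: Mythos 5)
Your proposal is correct and matches the paper's own treatment: the paper explicitly presents Corollary~\ref{cor:kahler-deldelbar-hodge-tw} as a summary of Theorem~\ref{thm:kahler-deldelbar-tw} (which is, by definition, the $\del_{(\theta_1,\theta_2)}\delbar_{(\theta_1,\theta_2)}$-Lemma) and Corollary~\ref{thm:kahler-hodge-tw} (which gives the three isomorphisms defining the $(\theta_1,\theta_2)$-Hodge decomposition). Your additional remark that setting $\phi := \theta_1+\overline{\theta_1}+\theta_2-\overline{\theta_2}$ produces a $\de$-closed form whose class allows Corollary~\ref{thm:kahler-hodge-tw} to be invoked is exactly the right bookkeeping, and nothing more is needed.
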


\subsection{Modifications and cohomologies with twisted differentials}

We recall that a {\em modification} of a compact complex manifold $\left( M ,\, J \right)$ of complex dimension $n$ is a holomorphic map
$$ \mu \colon \left( \tilde M ,\, \tilde J \right) \to \left( M ,\, J \right) $$
such that:
\begin{itemize}
\item $\left( \tilde M ,\, \tilde J \right)$ is a compact $n$-dimensional complex manifold;
\item there exists an analytic subset $S \subset M$ of codimension greater than or equal to $1$ such that $\mu\lfloor_{\tilde M\backslash \mu^{-1}(S)} \colon \tilde M\backslash \mu^{-1}(S) \to  M\backslash S$ is a biholomorphism.
\end{itemize}

For the following, compare \cite[Theorem 3.1]{wells}.

\begin{thm}\label{thm:inj-modification-cohom}
 Let $\mu \colon \left( \tilde M ,\, \tilde J \right) \to \left( M ,\, J \right)$ be a proper modification of a compact complex manifold $\left( M,\, J \right)$.
 Take $\theta_{1}, \theta_{2} \in H^{1,0}_{BC}(M)$, and $\phi:=\theta_{1}+\overline{\theta_{1}}+\theta_{2}-\overline{\theta_{2}}$.
 Then the map $\mu$ induces the injective maps
 \begin{eqnarray*}
  \lefteqn{ \mu_{dR,\phi}^* \colon H^{\bullet}\left(A^{\bullet}(M)_{\C};\, \de_{\phi}\right) \to H^{\bullet}\left(A^{\bullet}(\tilde M)_{\C};\, \de_{\mu^\ast\phi}\right) \;, } \\[5pt]
  \lefteqn{ \mu_{\del,(\theta_{1},\theta_{2})}^* \colon H^{\bullet}\left(A^{\bullet}(M)_{\C};\, \del_{(\theta_{1},\theta_{2})}\right) \to H^{\bullet}\left(A^{\bullet}(\tilde M)_{\C};\, \del_{(\mu^\ast\theta_{1},\mu^\ast\theta_{2})}\right) \;, } \\[5pt]
  \lefteqn{ \mu_{\delbar,(\theta_{1},\theta_{2})}^* \colon H^{\bullet}\left(A^{\bullet}(M)_{\C};\, \delbar_{(\theta_{1},\theta_{2})}\right) \to H^{\bullet}\left(A^{\bullet}(\tilde M)_{\C};\, \delbar_{(\mu^\ast\theta_{1},\mu^\ast\theta_{2})}\right) \;, } \\[5pt]
  \lefteqn{\mu_{BC,(\theta_{1},\theta_{2})}^* \colon H^{\bullet}\left(A^{\bullet}(M)_{\C};\, \del_{(\theta_{1},\theta_{2})},\, \delbar_{(\theta_{1},\theta_{2})};\, \del_{(\theta_{1},\theta_{2})}\delbar_{(\theta_{1},\theta_{2})}\right)} \\[5pt]
  && \to H^{\bullet}\left(A^{\bullet}(\tilde M)_{\C};\, \del_{(\mu^\ast\theta_{1},\mu^\ast\theta_{2})},\, \delbar_{(\mu^\ast\theta_{1},\mu^\ast\theta_{2})};\, \del_{(\mu^\ast\theta_{1},\mu^\ast\theta_{2})}\delbar_{(\mu^\ast\theta_{1},\mu^\ast\theta_{2})}\right) \;, \\[5pt]
  \lefteqn{\mu_{A,(\theta_{1},\theta_{2})}^* \colon H^{\bullet}\left(A^{\bullet}(M)_{\C};\, \del_{(\theta_{1},\theta_{2})}\delbar_{(\theta_{1},\theta_{2})};\, \del_{(\theta_{1},\theta_{2})},\, \delbar_{(\theta_{1},\theta_{2})}\right)} \\[5pt]
  && \to H^{\bullet}\left(A^{\bullet}(\tilde M)_{\C};\, \del_{(\mu^\ast\theta_{1},\mu^\ast\theta_{2})}\delbar_{(\mu^\ast\theta_{1},\mu^\ast\theta_{2})};\, \del_{(\mu^\ast\theta_{1},\mu^\ast\theta_{2})},\, \delbar_{(\mu^\ast\theta_{1},\mu^\ast\theta_{2})}\right) \;.
 \end{eqnarray*}
 and the surjective maps
  \begin{eqnarray*}
  \lefteqn{ \mu^{dR,\phi}_* \colon H^{\bullet}\left(A^{\bullet}(\tilde M)_{\C};\, \de_{\mu^\ast\phi}\right) \to H^{\bullet}\left(A^{\bullet}(M)_{\C};\, \de_{\phi}\right) \;, } \\[5pt]
  \lefteqn{ \mu^{\del,(\theta_{1},\theta_{2})}_* \colon H^{\bullet}\left(A^{\bullet}(\tilde M)_{\C};\, \del_{(\mu^\ast\theta_{1},\mu^\ast\theta_{2})}\right) \to H^{\bullet}\left(A^{\bullet}(M)_{\C};\, \del_{(\theta_{1},\theta_{2})}\right) \;, } \\[5pt]
  \lefteqn{ \mu^{\delbar,(\theta_{1},\theta_{2})}_* \colon H^{\bullet}\left(A^{\bullet}(\tilde M)_{\C};\, \delbar_{(\mu^\ast\theta_{1},\mu^\ast\theta_{2})}\right) \to H^{\bullet}\left(A^{\bullet}(M)_{\C};\, \delbar_{(\theta_{1},\theta_{2})}\right) \;, } \\[5pt]
  \lefteqn{\mu^{BC,(\theta_{1},\theta_{2})}_* \colon H^{\bullet}\left(A^{\bullet}(\tilde M)_{\C};\, \del_{(\mu^\ast\theta_{1},\mu^\ast\theta_{2})},\, \delbar_{(\mu^\ast\theta_{1},\mu^\ast\theta_{2})};\, \del_{(\mu^\ast\theta_{1},\mu^\ast\theta_{2})}\delbar_{(\mu^\ast\theta_{1},\mu^\ast\theta_{2})}\right)} \\[5pt]
  && \to H^{\bullet}\left(A^{\bullet}(M)_{\C};\, \del_{(\theta_{1},\theta_{2})},\, \delbar_{(\theta_{1},\theta_{2})};\, \del_{(\theta_{1},\theta_{2})}\delbar_{(\theta_{1},\theta_{2})}\right) \;, \\[5pt]
  \lefteqn{\mu^{A,(\theta_{1},\theta_{2})}_* \colon H^{\bullet}\left(A^{\bullet}(\tilde M)_{\C};\, \del_{(\mu^\ast\theta_{1},\mu^\ast\theta_{2})}\delbar_{(\mu^\ast\theta_{1},\mu^\ast\theta_{2})};\, \del_{(\mu^\ast\theta_{1},\mu^\ast\theta_{2})},\, \delbar_{(\mu^\ast\theta_{1},\mu^\ast\theta_{2})}\right)} \\[5pt]
  && \to H^{\bullet}\left(A^{\bullet}(M)_{\C};\, \del_{(\theta_{1},\theta_{2})}\delbar_{(\theta_{1},\theta_{2})};\, \del_{(\theta_{1},\theta_{2})},\, \delbar_{(\theta_{1},\theta_{2})}\right) \;.
 \end{eqnarray*}
\end{thm}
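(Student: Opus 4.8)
The plan is to produce a one-sided inverse to each pullback map $\mu^*$, assembled from the pushforward of currents together with the form-to-current isomorphisms established above. The single algebraic input is the identity
\[ \mu_* \circ T \circ \mu^* \;=\; T \qquad \text{as maps } A^\bullet(M)_\C \to D^\bullet(M)_\C \;, \]
where $T$ is the inclusion of forms into currents. Granting this, injectivity of all the pullback maps and surjectivity of all the pushforward maps are purely formal.

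First I would establish the displayed identity. For $\alpha \in A^\bullet(M)_\C$, the current $T_{\mu^*\alpha}$ on $\tilde M$ equals $\mu^*\alpha \wedge [\tilde M]$, where $[\tilde M] := T_1$ is the current of integration over $\tilde M$. The projection formula $\mu_*(\mu^*\alpha \wedge C) = \alpha \wedge \mu_* C$, applied to $C = [\tilde M]$, gives $\mu_*(T_{\mu^*\alpha}) = \alpha \wedge \mu_*[\tilde M]$. Since $\mu$ is a modification, it is a biholomorphism outside an analytic subset of codimension at least $1$, which has measure zero; hence $\int_{\tilde M} \mu^*\psi = \int_M \psi$ for every top-degree form $\psi$, that is, $\mu_*[\tilde M] = [M]$, and therefore $\mu_*(T_{\mu^*\alpha}) = T_\alpha$. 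I would then note that $\mu^*$, $T$, and $\mu_*$ all commute with $\del$ and $\delbar$; combined with the projection formula and with $\overline{\mu^*\theta_i} = \mu^*\overline{\theta_i}$, this shows $\mu_* \circ \del_{(\mu^*\theta_1,\mu^*\theta_2)} = \del_{(\theta_1,\theta_2)} \circ \mu_*$ and the analogous relation for $\delbar_{(\theta_1,\theta_2)}$ on currents --- which is precisely the reason the twisting data is pulled back along $\mu$. Consequently the displayed identity descends to each of the five cohomology theories.

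Next I would define each pushforward map on form cohomology as the composite of the isomorphism $T$ on $\tilde M$, the genuine pushforward $\mu_*$ on current cohomology, and the inverse of the isomorphism $T$ on $M$, all three coming from the form-to-current theorem above. The descended identity then becomes $\mu_*^{\bullet} \circ \mu^*_{\bullet} = \id$ on each form cohomology of $M$ (with $\bullet$ ranging over the labels $dR$, $\del$, $\delbar$, $BC$, $A$). Thus every pullback map has a left inverse and is injective, while every pushforward map has a right inverse and is surjective; this is exactly the assertion for all five pairs.

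The one point deserving care --- rather than genuine difficulty --- is the degree-one identity $\mu_*[\tilde M] = [M]$, which rests on $\mu$ being proper and a biholomorphism away from the measure-zero exceptional locus. Everything else is the routine verification that $\mu_*$ intertwines the twisted differentials on currents, which follows mechanically from the Leibniz rules and the projection formula.
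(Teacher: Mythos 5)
Your proposal is correct and follows essentially the same route as the paper: both arguments build a one-sided inverse to $\mu^*$ out of the current pushforward $\mu_*$ and the form-to-current isomorphisms $T$, reducing everything to the identity $\mu_* \circ T \circ \mu^* = c\, T$ with $c \neq 0$, which then descends to all five cohomologies because $\mu^*$, $T$, and $\mu_*$ intertwine the twisted differentials. The only difference is in how that identity is verified: the paper follows Wells and computes $c = \ell$ (the generic sheeting number, equal to $1$ for a modification) by pairing against test forms and a partition-of-unity argument, whereas you obtain $c = 1$ directly from the projection formula applied to $\left[\tilde M\right]$ together with $\mu_*\left[\tilde M\right] = \left[M\right]$, a mild streamlining of the same computation.
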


\begin{proof}
 We follow closely the proof by R.~O. Wells in \cite[Theorem 3.1]{wells}.
 
 Consider the diagram
 $$ \xymatrix{
  A^{\bullet}(\tilde M)_\C \ar[r]^{T} & D^{\bullet}(\tilde M)_\C \ar[d]^{\mu_*} \\
  A^{\bullet}(M)_\C \ar[u]^{\mu^*} \ar[r]_{T} & D^{\bullet}(M)_{\C} \;.
 } $$

 There exists a proper analytic subset $\tilde S \subset \tilde M$ such that
 $$ \mu \lfloor_{\tilde M \setminus \tilde S} \colon \tilde M \setminus \tilde S \to M \setminus \mu(S) $$
 is a finitely-sheeted covering map of sheeting number $\ell\in\N\setminus\{0\}$. Let $\mathcal{U}:=\left\{U_\alpha\right\}_{j\in J}$ be an open covering of $M\setminus\mu(S)$, and let $\left\{\rho_\alpha\right\}_{j\in J}$ be an associated partition of unity. For every $\alpha,\beta\in A^{\bullet}(M)_\C$, one has that
 \begin{eqnarray*}
  \left\langle \mu_*T\mu^* \alpha,\beta \right\rangle
  &=& \left\langle T\mu^*\alpha,\mu^*\beta \right\rangle
  \;=\; \int_{\tilde M} \mu^*\alpha\wedge\mu^*\beta
  \;=\; \int_{\tilde M} \mu^*\left(\alpha\wedge\beta\right)
  \;=\; \int_{\tilde M \setminus \tilde S} \mu^*\left(\alpha\wedge\beta\right) \\[5pt]
  &=& \sum_{j\in J} \int_{\mu^{-1}(U_j)} \mu^*\left(\rho_j\cdot\left(\alpha\wedge\beta\right)\right)
  \;=\; \sum_{j\in J} \sum_{\sharp\left\{U \in \mathcal{U} \st \mu(U)=\mu(U_j)\right\}} \int_{U_j} \rho_j\cdot\left(\alpha\wedge\beta\right) \\[5pt]
  &=& \ell \cdot \sum_{j\in J} \int_{U_j} \rho_j\cdot\left(\alpha\wedge\beta\right) \;=\; \ell \cdot \int_{ M \setminus \mu(\tilde S) } \alpha\wedge\beta \;=\; \ell \cdot \int_{M} \alpha\wedge\beta \;=\; \left\langle \ell \cdot T \alpha , \beta \right\rangle \;,
 \end{eqnarray*}
 and hence one gets that
 $$ \mu_*T\mu^* \;=\; \ell \cdot T \;. $$

 In particular, one gets, for $\sharp\in\left\{\del,\delbar,BC,A\right\}$,
 $$ \mu_*^{dR,\phi} T \mu^*_{dR,\phi} \;=\; \ell \cdot T \qquad \text{ and } \qquad \mu_*^{\sharp,(\theta_1,\theta_2)} T \mu^*_{\sharp,(\theta_1,\theta_2)} \;=\; \ell \cdot T \;. $$
 Hence, in particular, for $\sharp\in\left\{\del,\delbar,BC,A\right\}$, the maps $\mu^*_{dR,\phi}$ and $\mu^*_{\sharp,(\theta_1,\theta_2)}$ are injective, and the maps $\mu_*^{dR,\phi}$ and $\mu_*^{\sharp,(\theta_1,\theta_2)}$ are surjective.
\end{proof}

\subsection{Modifications and \texorpdfstring{$\del\delbar$-Lemma}{partialoverlinepartial-Lemma} with twisted differentials}
As a consequence, we get the following two results. The first one concerns the behaviour of $\del_{(\theta_{1},\theta_{2})}\delbar_{(\theta_{1},\theta_{2})}$-Lemma under proper modifications.

\begin{thm}\label{thm:deldelbar-modification}
 Let $\mu \colon \left( \tilde M ,\, \tilde J \right) \to \left( M ,\, J \right)$ be a proper modification of a compact complex manifold $\left( M,\, J \right)$.
 Take $\theta_{1}, \theta_{2} \in H^{1,0}_{BC}(M)$. If $\left( \tilde M ,\, \tilde J \right)$ satisfies the $\del_{(\mu^{\ast}\theta_{1},\mu^{\ast}\theta_{2})}\delbar_{(\mu^{\ast}\theta_{1},\mu^{\ast}\theta_{2})}$-Lemma, then $\left( M ,\, J \right)$ satisfies the $\del_{(\theta_{1},\theta_{2})}\delbar_{(\theta_{1},\theta_{2})}$-Lemma.
\end{thm}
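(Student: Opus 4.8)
The plan is to reduce the assertion to the injectivity of a single comparison map and then run a short diagram chase built on Theorem~\ref{thm:inj-modification-cohom}. Writing $H^{\bullet}_{BC}$ and $H^{\bullet}_{A}$ for the Bott--Chern- and Aeppli-type cohomologies attached to a given pair of twisting forms, the Definition (see also Lemma~\ref{lemma:deldelbar-equiv-def}) tells us that $(M,J)$ satisfies the $\del_{(\theta_{1},\theta_{2})}\delbar_{(\theta_{1},\theta_{2})}$-Lemma if and only if the identity-induced map $\iota^{M}_{BC,A}\colon H^{\bullet}_{BC}(M)\to H^{\bullet}_{A}(M)$ is injective, and likewise that $(\tilde M,\tilde J)$ satisfies the pulled-back Lemma if and only if $\iota^{\tilde M}_{BC,A}\colon H^{\bullet}_{BC}(\tilde M)\to H^{\bullet}_{A}(\tilde M)$ is injective. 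Thus the task reduces to deducing injectivity of $\iota^{M}_{BC,A}$ from injectivity of $\iota^{\tilde M}_{BC,A}$.

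The key point is that the pullback $\mu^{\ast}$ is a morphism of bi-differential $\Z$-graded complexes, so on the chain level it commutes with the identity; consequently it induces a commutative square
$$ \xymatrix{
H^{\bullet}_{BC}(M) \ar[r]^{\iota^{M}_{BC,A}} \ar[d]_{\mu^{*}_{BC,(\theta_{1},\theta_{2})}} & H^{\bullet}_{A}(M) \ar[d]^{\mu^{*}_{A,(\theta_{1},\theta_{2})}} \\
H^{\bullet}_{BC}(\tilde M) \ar[r]_{\iota^{\tilde M}_{BC,A}} & H^{\bullet}_{A}(\tilde M)
} $$
Commutativity is immediate, since both horizontal arrows are induced by $\id$ on forms and $\mu^{\ast}\circ\id=\id\circ\mu^{\ast}$.

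Finally, I would invoke Theorem~\ref{thm:inj-modification-cohom} to obtain that $\mu^{*}_{BC,(\theta_{1},\theta_{2})}$ is injective. Together with the hypothesis that $\iota^{\tilde M}_{BC,A}$ is injective, the composite $\iota^{\tilde M}_{BC,A}\circ\mu^{*}_{BC,(\theta_{1},\theta_{2})}$ is injective; by commutativity it equals $\mu^{*}_{A,(\theta_{1},\theta_{2})}\circ\iota^{M}_{BC,A}$, which is therefore injective as well. Since a composite $g\circ f$ can be injective only when $f$ is injective, it follows that $\iota^{M}_{BC,A}$ is injective, that is, $(M,J)$ satisfies the $\del_{(\theta_{1},\theta_{2})}\delbar_{(\theta_{1},\theta_{2})}$-Lemma.

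I do not anticipate a genuine obstacle: the substantive input, the injectivity of $\mu^{*}_{BC,(\theta_{1},\theta_{2})}$, has already been secured in Theorem~\ref{thm:inj-modification-cohom} through the current-theoretic identity $\mu_{\ast}T\mu^{\ast}=\ell\cdot T$. The only step warranting care is the commutativity of the square, which rests solely on $\mu^{\ast}$ intertwining the twisted operators, namely $\mu^{\ast}\del_{(\theta_{1},\theta_{2})}=\del_{(\mu^{\ast}\theta_{1},\mu^{\ast}\theta_{2})}\mu^{\ast}$ and $\mu^{\ast}\delbar_{(\theta_{1},\theta_{2})}=\delbar_{(\mu^{\ast}\theta_{1},\mu^{\ast}\theta_{2})}\mu^{\ast}$; these follow from $\mu^{\ast}\del=\del\mu^{\ast}$, $\mu^{\ast}\delbar=\delbar\mu^{\ast}$, and $\mu^{\ast}(\theta_{i}\wedge x)=(\mu^{\ast}\theta_{i})\wedge\mu^{\ast}x$.
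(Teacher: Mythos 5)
Your proof is correct and follows essentially the same route as the paper: reduce the Lemma to injectivity of the identity-induced map from Bott--Chern-type to Aeppli-type cohomology, form the commutative square intertwining this map with $\mu^{*}_{BC,(\theta_{1},\theta_{2})}$ and $\mu^{*}_{A,(\theta_{1},\theta_{2})}$, and conclude by the injectivity of $\mu^{*}_{BC,(\theta_{1},\theta_{2})}$ from Theorem~\ref{thm:inj-modification-cohom} together with the hypothesis on $\tilde M$. The paper's diagram is merely the transpose of yours, and your closing remark on why the square commutes matches the paper's earlier observation that $\mu^{\ast}$ is a morphism of bi-differential $\Z$-graded complexes.
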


\begin{proof}
 Consider the commutative diagram
 $$ \xymatrix{
      H^{\bullet}_{BC}\left(M;(\theta_1,\theta_2)\right) \ar[d]_{\id_{M}^\ast} \ar[rr]^{\mu^*_{BC,(\theta_1,\theta_2)}} && H^{\bullet}_{BC}\left(\tilde M;(\mu^\ast\theta_1,\mu^\ast\theta_2)\right) \ar[d]^{\id_{\tilde M}^\ast} \\
      H^{\bullet}_{A}\left(M;(\theta_1,\theta_2)\right) \ar[rr]_{\mu^*_{A,(\theta_1,\theta_2)}} && H^{\bullet}_{A}\left(\tilde M;(\mu^\ast\theta_1,\mu^\ast\theta_2)\right) \\
 } $$
 where, for simplicity, we have denoted, e.g.,
 $$
 H^{\bullet}_{BC}\left(\tilde M;(\mu^\ast\theta_1,\mu^\ast\theta_2)\right) \;:=\; H^{\bullet}\left( A^{\bullet}(\tilde M)_{\C};\, \del_{(\mu^\ast\theta_{1},\mu^\ast\theta_{2})},\, \delbar_{(\mu^\ast\theta_{1},\mu^\ast\theta_{2})};\, \del_{(\mu^\ast\theta_{1},\mu^\ast\theta_{2})}\delbar_{(\mu^\ast\theta_{1},\mu^\ast\theta_{2})} \right)
 $$
 and
 $$
 H^{\bullet}_{A}\left(\tilde M;(\mu^\ast\theta_1,\mu^\ast\theta_2)\right) \;:=\; H^{\bullet}\left( A^{\bullet}(\tilde M)_{\C};\, \del_{(\mu^\ast\theta_{1},\mu^\ast\theta_{2})}\delbar_{(\mu^\ast\theta_{1},\mu^\ast\theta_{2})};\, \del_{(\mu^\ast\theta_{1},\mu^\ast\theta_{2})},\, \delbar_{(\mu^\ast\theta_{1},\mu^\ast\theta_{2})} \right) \;.
 $$

 Suppose that $\left( \tilde M ,\, \tilde J \right)$ satisfies the $\del_{(\mu^\ast\theta_1,\mu^\ast\theta_2)}\delbar_{(\mu^\ast\theta_1,\mu^\ast\theta_2)}$-Lemma. Then, by definition, the map $\id_{\tilde M}^\ast$ is injective. Furthermore, the map $\mu^*_{BC,(\theta_1,\theta_2)}$ is injective by Theorem \ref{thm:inj-modification-cohom}. Hence the map $\id_{M}^\ast$ is injective, that is, $\left( M ,\, J \right)$ satisfies the $\del_{(\theta_1,\theta_2)}\delbar_{(\theta_1,\theta_2)}$-Lemma.
\end{proof}

\subsection{Modifications and Hodge decomposition with twisted differentials}
The second result concerns Hodge decomposition with twisted differential.

\begin{thm}\label{thm:hodgedec-modification}
 Let $\mu \colon \left( \tilde M ,\, \tilde J \right) \to \left( M ,\, J \right)$ be a proper modification of a compact complex manifold $\left( M,\, J \right)$.
 Take $\theta_{1}, \theta_{2} \in H^{1,0}_{BC}(M)$. If $\left( \tilde M ,\, \tilde J \right)$ satisfies the $\left(\mu^{\ast}\theta_{1},\mu^{\ast}\theta_{2}\right)$-Hodge decomposition, then $\left( M ,\, J \right)$ satisfies the $\left(\theta_{1},\theta_{2}\right)$-Hodge decomposition.
\end{thm}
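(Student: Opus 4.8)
The plan is to mimic exactly the strategy used in the proof of Theorem~\ref{thm:deldelbar-modification}, replacing the single Bott--Chern-to-Aeppli comparison map with the three comparison maps whose simultaneous bijectivity defines the $(\theta_1,\theta_2)$-Hodge decomposition. Recall that, by definition, $\left(M,J\right)$ admits the $\left(\theta_1,\theta_2\right)$-Hodge decomposition precisely when the three natural maps
\[
 H^{\bullet}_{BC}\left(M;(\theta_1,\theta_2)\right) \to H^{\bullet}\left(A^{\bullet}(M)_{\C};\, \de_{\phi}\right), \quad
 H^{\bullet}_{BC}\left(M;(\theta_1,\theta_2)\right) \to H^{\bullet}\left(A^{\bullet}(M)_{\C};\, \del_{(\theta_1,\theta_2)}\right),
\]
and the analogous map to $H^{\bullet}\left(A^{\bullet}(M)_{\C};\, \delbar_{(\theta_1,\theta_2)}\right)$, all induced by the identity, are isomorphisms. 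So I would reduce the statement to showing each of these three identity-induced maps on $M$ is both injective and surjective, given that the corresponding three maps on $\tilde M$ (with $\mu^{\ast}\theta_1,\mu^{\ast}\theta_2$) are isomorphisms.

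First I would set up, for each $\sharp\in\{dR,\del,\delbar\}$, the commutative square
\[
 \xymatrix{
  H^{\bullet}_{BC}\left(M;(\theta_1,\theta_2)\right) \ar[d]_{\iota_M} \ar[rr]^{\mu^*_{BC,(\theta_1,\theta_2)}} && H^{\bullet}_{BC}\left(\tilde M;(\mu^\ast\theta_1,\mu^\ast\theta_2)\right) \ar[d]^{\iota_{\tilde M}} \\
  H^{\bullet}_{\sharp}\left(M;(\theta_1,\theta_2)\right) \ar[rr]_{\mu^*_{\sharp,(\theta_1,\theta_2)}} && H^{\bullet}_{\sharp}\left(\tilde M;(\mu^\ast\theta_1,\mu^\ast\theta_2)\right),
 }
\]
where $\iota_M,\iota_{\tilde M}$ are the identity-induced comparison maps and $H^{\bullet}_{\sharp}$ denotes the respective cohomology. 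Commutativity holds because all maps come from the identity at the level of complexes and $\mu^*$ is a morphism of (bi-)differential graded complexes, so the square commutes on the nose. By hypothesis $\iota_{\tilde M}$ is an isomorphism for each $\sharp$. By Theorem~\ref{thm:inj-modification-cohom}, the top map $\mu^*_{BC,(\theta_1,\theta_2)}$ is injective, which forces $\iota_M$ to be injective: if $\iota_M(x)=0$ then $\iota_{\tilde M}\,\mu^*_{BC}(x)=\mu^*_{\sharp}\,\iota_M(x)=0$, and since $\iota_{\tilde M}$ is an isomorphism $\mu^*_{BC}(x)=0$, whence $x=0$ by injectivity of $\mu^*_{BC}$.

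For surjectivity I would instead use the dual square built from the surjective push-forward maps $\mu_*^{\sharp,(\theta_1,\theta_2)}$ of Theorem~\ref{thm:inj-modification-cohom} together with the homology-side comparison maps, reading it in the same way: the commutative square
\[
 \xymatrix{
  H^{\bullet}_{BC}\left(\tilde M;(\mu^\ast\theta_1,\mu^\ast\theta_2)\right) \ar[d]_{\iota_{\tilde M}} \ar[rr]^{\mu_*^{BC,(\theta_1,\theta_2)}} && H^{\bullet}_{BC}\left(M;(\theta_1,\theta_2)\right) \ar[d]^{\iota_M} \\
  H^{\bullet}_{\sharp}\left(\tilde M;(\mu^\ast\theta_1,\mu^\ast\theta_2)\right) \ar[rr]_{\mu_*^{\sharp,(\theta_1,\theta_2)}} && H^{\bullet}_{\sharp}\left(M;(\theta_1,\theta_2)\right)
 }
\]
has surjective bottom map $\mu_*^{\sharp}$ and isomorphism $\iota_{\tilde M}$, so $\iota_M$ is surjective: any $y\in H^{\bullet}_{\sharp}(M)$ is $\mu_*^{\sharp}(w)$ for some $w$, and $w=\iota_{\tilde M}(v)$ for some $v$, whence $y=\mu_*^{\sharp}\iota_{\tilde M}(v)=\iota_M\,\mu_*^{BC}(v)$. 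Applying this to each $\sharp\in\{dR,\del,\delbar\}$ shows $\iota_M$ is an isomorphism in all three cases, which is exactly the $\left(\theta_1,\theta_2\right)$-Hodge decomposition for $\left(M,J\right)$.

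The only point requiring care — and the place I would spend the most attention — is verifying that the homology/push-forward diagram genuinely commutes and that the push-forward maps $\mu_*^{\sharp}$ are compatible with the identity-induced comparison maps $\iota$ on the current complexes; this rests on the isomorphisms $T\colon A^{\bullet}\to D^{\bullet}$ inducing the stated cohomology isomorphisms and on $\mu_*$ being a morphism of bi-differential graded complexes, both established earlier in the excerpt. Everything else is the formal diagram-chase above. Since all spaces are finite-dimensional, one could alternatively deduce surjectivity from injectivity by a dimension count via the $\bar\ast$-duality of Corollary~\ref{cor:star-duality}, but the push-forward argument is cleaner and parallels Theorem~\ref{thm:deldelbar-modification} directly.
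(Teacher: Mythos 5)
Your proposal is correct and follows essentially the same route as the paper's own proof: the same commutative squares comparing the identity-induced maps $\id_M^\ast$ and $\id_{\tilde M}^\ast$ with the pull-backs (injective) for injectivity, and with the push-forwards (surjective) for surjectivity, both supplied by Theorem~\ref{thm:inj-modification-cohom}, applied to each of the three comparison maps defining the $(\theta_1,\theta_2)$-Hodge decomposition. The compatibility point you flag about the push-forward squares (via the form-to-current isomorphism $T$) is indeed the only implicit step, and it is handled exactly as you describe.
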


\begin{proof}
 For simplicity, we denote, e.g.,
 $$
 H^{\bullet}_{BC}\left(\tilde M;(\mu^\ast\theta_1,\mu^\ast\theta_2)\right) \;:=\; H^{\bullet}\left( A^{\bullet}(\tilde M)_{\C};\, \del_{(\mu^\ast\theta_{1},\mu^\ast\theta_{2})},\, \delbar_{(\mu^\ast\theta_{1},\mu^\ast\theta_{2})};\, \del_{(\mu^\ast\theta_{1},\mu^\ast\theta_{2})}\delbar_{(\mu^\ast\theta_{1},\mu^\ast\theta_{2})} \right)
 $$
 and
 $$
 H^{\bullet}_{A}\left(\tilde M;(\mu^\ast\theta_1,\mu^\ast\theta_2)\right) \;:=\; H^{\bullet}\left( A^{\bullet}(\tilde M)_{\C};\, \del_{(\mu^\ast\theta_{1},\mu^\ast\theta_{2})}\delbar_{(\mu^\ast\theta_{1},\mu^\ast\theta_{2})};\, \del_{(\mu^\ast\theta_{1},\mu^\ast\theta_{2})},\, \delbar_{(\mu^\ast\theta_{1},\mu^\ast\theta_{2})} \right) \;.
 $$

 Consider the commutative diagram
 $$ \xymatrix{
      H^{\bullet}_{BC}\left(M;(\theta_1,\theta_2)\right) \ar[d]_{\id_{M}^\ast} \ar[rr]^{\mu^*_{BC,(\theta_1,\theta_2)}} && H^{\bullet}_{BC}\left(\tilde M;(\mu^\ast\theta_1,\mu^\ast\theta_2)\right) \ar[d]^{\id_{\tilde M}^\ast} \\
      H^{\bullet}_{dR}\left(M;\phi\right) \ar[rr]_{\mu^*_{dR,\phi}} && H_{dR}\left(\tilde M;\mu^\ast\phi\right) \\
 } .$$
 Then by Theorem \ref{thm:inj-modification-cohom}, $\mu^*_{BC,(\theta_1,\theta_2)}$ and $\mu^*_{dR,\phi}$ are injective.
 Hence by the injectivity of $\id_{\tilde M}^\ast$, the map $\id_{M}^\ast:H^{\bullet}_{BC}\left(M;(\theta_1,\theta_2)\right)\to H^{\bullet}_{dR}\left(M;\phi\right) $ is injective.

 Considering the direct image maps, we have the commutative diagram
 $$ \xymatrix{
      H^{\bullet}_{BC}\left(M;(\theta_1,\theta_2)\right) \ar[d]_{\id_{M}^\ast} && \ar[ll]^{\mu_*^{BC,(\theta_1,\theta_2)}} H^{\bullet}_{BC}\left(\tilde M;(\mu^\ast\theta_1,\mu^\ast\theta_2)\right) \ar[d]^{\id_{\tilde M}^\ast} \\
      H^{\bullet}_{dR}\left(M;\phi\right)  &&\ar[ll]_{\mu_*^{dR,\phi}} H_{dR}\left(\tilde M;\mu^\ast\phi\right) \\
 } .$$
 By Theorem \ref{thm:inj-modification-cohom}, $\mu_*^{BC,(\theta_1,\theta_2)}$ and $\mu_*^{dR,\phi}$ are surjective.
 By the surjectivity of $\id_{\tilde M}^\ast$,  the map $\id_{M}^\ast:H^{\bullet}_{BC}\left(M;(\theta_1,\theta_2)\right)\to H^{\bullet}_{dR}\left(M;\phi\right) $ is surjective.

 Arguing in the same way with the Dolbeault cohomologies, we get that $M$ satisfies the $\left(\theta_1,\theta_2\right)$-Hodge decomposition.
\end{proof}

\medskip

We recall that a compact complex manifold $\left( M,\, J \right)$ is said to be in {\em class $\mathcal{C}$ of Fujiki}, \cite{fujiki}, if it admits a proper modification $\mu \colon \left( \tilde M ,\, \tilde J \right) \to \left( M ,\, J \right)$ with $\left( \tilde M ,\, \tilde J \right)$ admitting K\"ahler metrics. In particular, a {\em Mo\v\i\v shezon manifold}, \cite{moishezon}, (that is, a compact complex manifold of complex dimension $n$ such that the degree of transcendence over $\C$ of the field of meromorphic functions is equal to $n$,) admits a proper modification from a projective manifold, \cite[Theorem 1]{moishezon}, and therefore belongs to class $\mathcal{C}$ of Fujiki.

\begin{cor}\label{cor:classC-hodge}
 Let $\left( M ,\, J \right)$ be a compact complex manifold in class $\mathcal{C}$ of Fujiki.
 Take $\theta_{1}, \theta_{2} \in H^{1,0}_{BC}(M)$.
 Then $\left( M ,\, J \right)$ satisfies the $\del_{(\theta_{1},\theta_{2})}\delbar_{(\theta_{1},\theta_{2})}$-Lemma and admits the $\left(\theta_1,\theta_2\right)$-Hodge decomposition.
\end{cor}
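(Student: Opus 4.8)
The plan is to lift the problem to a Kähler modification and then descend the two conclusions using the comparison results already established. Since $(M, J)$ belongs to class $\mathcal{C}$ of Fujiki, by definition there exists a proper modification $\mu \colon (\tilde M, \tilde J) \to (M, J)$ with $(\tilde M, \tilde J)$ admitting a Kähler metric. First I would record that, because $\mu^\ast$ commutes with $\del$ and $\delbar$, the pulled-back forms satisfy $\mu^\ast\theta_1, \mu^\ast\theta_2 \in H^{1,0}_{BC}(\tilde M)$; this is exactly the compatibility noted when the pull-back morphism was introduced, and it is what makes the twisted operators $\del_{(\mu^\ast\theta_1, \mu^\ast\theta_2)}$ and $\delbar_{(\mu^\ast\theta_1, \mu^\ast\theta_2)}$ on $\tilde M$ well-defined objects to which the Kähler theory applies.

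Next, since $(\tilde M, \tilde J)$ is Kähler, I would apply Corollary \ref{cor:kahler-deldelbar-hodge-tw} to the pair $(\mu^\ast\theta_1, \mu^\ast\theta_2)$: this gives that $\tilde M$ satisfies the $\del_{(\mu^\ast\theta_1, \mu^\ast\theta_2)}\delbar_{(\mu^\ast\theta_1, \mu^\ast\theta_2)}$-Lemma and admits the $(\mu^\ast\theta_1, \mu^\ast\theta_2)$-Hodge decomposition. With these two properties available upstairs, I would conclude by invoking the descent results: Theorem \ref{thm:deldelbar-modification} transfers the $\del\delbar$-Lemma from $\tilde M$ to $M$, so that $(M, J)$ satisfies the $\del_{(\theta_1,\theta_2)}\delbar_{(\theta_1,\theta_2)}$-Lemma, and Theorem \ref{thm:hodgedec-modification} transfers the Hodge decomposition, so that $(M, J)$ admits the $(\theta_1, \theta_2)$-Hodge decomposition. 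This assembles into the full statement.

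In this formulation the corollary is a formal consequence, so there is no genuinely hard step remaining at this stage: all the analytic content has been absorbed into the earlier results. If I had to single out where the real work lies, it is in Theorem \ref{thm:inj-modification-cohom}, whose proof establishes the injectivity of the pull-back maps and the surjectivity of the direct-image maps on all the twisted cohomologies via the current-theoretic identity $\mu_\ast T \mu^\ast = \ell \cdot T$; it is these injectivity and surjectivity statements, fed into the commutative squares of Theorems \ref{thm:deldelbar-modification} and \ref{thm:hodgedec-modification}, that force $\id_M^\ast$ to be an isomorphism downstairs. The only point demanding a moment of care in the present argument is the verification that the pulled-back Bott-Chern classes remain of the required $(1,0)$-type, which is automatic from the commutation of $\mu^\ast$ with $\del$ and $\delbar$.
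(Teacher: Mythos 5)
Your proof is correct and follows exactly the paper's own argument: take the K\"ahler modification $\mu$ given by the definition of class $\mathcal{C}$, apply Corollary \ref{cor:kahler-deldelbar-hodge-tw} to the pulled-back classes $\mu^\ast\theta_1, \mu^\ast\theta_2$ upstairs, and descend via Theorem \ref{thm:deldelbar-modification} and Theorem \ref{thm:hodgedec-modification}. Your additional observations --- that the pull-backs remain in $H^{1,0}_{BC}(\tilde M)$ and that the real work is hidden in Theorem \ref{thm:inj-modification-cohom} --- are accurate but do not change the route.
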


\begin{proof}
 Consider a proper modification $\mu \colon \left( \tilde M ,\, \tilde J \right) \to \left( M ,\, J \right)$ with $\left( \tilde M ,\, \tilde J \right)$ admitting K\"ahler metrics. From Corollary \ref{cor:kahler-deldelbar-hodge-tw}, (see also Theorem \ref{thm:kahler-deldelbar-tw} and Theorem \ref{thm:kahler-hodge-tw},) the compact K\"ahler manifold $\left( \tilde M ,\, \tilde J \right)$ satisfies the $\del_{(\mu^\ast\theta_1,\mu^\ast\theta_2)}\delbar_{(\mu^\ast\theta_1,\mu^\ast\theta_2)}$-Lemma, and admits the $(\mu^\ast\theta_1,\mu^\ast\theta_2)$-Hodge decomposition. Therefore, from Theorem \ref{thm:deldelbar-modification} and Theorem \ref{thm:hodgedec-modification}, the compact complex manifold $\left( M ,\, J \right)$ satisfies the $\del_{(\theta_1,\theta_2)}\delbar_{(\theta_1,\theta_2)}$-Lemma and admits the $(\theta_1,\theta_2)$-Hodge decomposition.
\end{proof}

\subsection{Complex orbifolds of global-quotient type}
I. Satake introduce in \cite{satake} the notion of {\em orbifold}, also called \emph{V-manifold}; see also \cite{baily, baily-2}.
It is a singular complex space whose singularities are locally isomorphic to quotient singularities $\left. \C^n \middle\slash G \right.$, for finite subgroups $G \subset \GL_n(\C)$.
In particular, we are interested in compact {\em complex orbifolds of global-quotient type}, namely, compact complex orbifolds given by $\hat M = \left. M \middle\slash G \right.$ where $M$ is a compact complex manifold and $G$ is a finite group of biholomorphisms of $M$. See \cite{angella-2} and the references therein for motivations.

\medskip

From the cohomological point of view, one can adapt both the sheaf-theoretic and the analytic tools to complex orbifolds, see \cite{satake, baily, baily-2, angella-2}. In particular, let $\hat M = \left. M \middle\slash G \right.$ be a compact complex orbifold of global-quotient type. Consider the double-complex $\left( \wedge^{\bullet,\bullet} \hat M ,\, \del ,\, \delbar \right)$, where the space $\wedge^{\bullet,\bullet} \hat M$ of differential forms on $\hat M$ is defined as the space of $G$-invariant differential forms on $M$. Consider the associated cohomologies. Fix a Hermitian metric on $\hat M$, namely, a $G$-invariant Hermitian metric on $M$. Consider the Laplacian operators defined as in the smooth case. Then Hodge theory applies, \cite[Theorem H, Theorem K]{baily}, \cite[Theorem 1]{angella-2}.

\medskip

By considering objects on $\hat M$ as $G$-invariant objects on $M$, one can adapt all the definitions and results in the previous sections in a straightforward way. In particular, as an analogue of \cite[Theorem 2]{angella-2}, we can restate Theorem \ref{thm:deldelbar-modification} and Theorem \ref{thm:hodgedec-modification} as follows.

\begin{thm}\label{thm:mod-orbfd-glob}
 Let $\mu \colon \hat N \to \hat M$ be a proper modification between compact complex orbifolds of global-quotient type.
 Take $\theta_{1}, \theta_{2} \in H^{1,0}_{BC}(\hat M)$.
 \begin{itemize}
  \item If $\hat N$ satisfies the $\del_{(\mu^{\ast}\theta_{1},\mu^{\ast}\theta_{2})}\delbar_{(\mu^{\ast}\theta_{1},\mu^{\ast}\theta_{2})}$-Lemma, then $\hat M$ satisfies the $\del_{(\theta_{1},\theta_{2})}\delbar_{(\theta_{1},\theta_{2})}$-Lemma.
  \item If $\hat N$ satisfies both the $\left(\mu^{\ast}\theta_{1},\mu^{\ast}\theta_{2}\right)$-Hodge decomposition, %and the $\left(-\mu^{\ast}\overline\theta_{2},-\mu^{\ast}\overline\theta_{1}\right)$-Hodge decomposition, 
then $\hat M$ satisfies the $\left(\theta_{1},\theta_{2}\right)$-Hodge decomposition.
 \end{itemize}
\end{thm}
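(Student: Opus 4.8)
The plan is to exploit the principle, stated just before the theorem, that objects on $\hat M = M/G$ are $G$-invariant objects on $M$, and then to transcribe the two modification arguments verbatim in the $G$-invariant category. Write $\hat M = M/G$ and $\hat N = N/H$ for finite groups $G, H$ of biholomorphisms, so that by definition the twisted cohomologies of $\hat M$ are computed from the subcomplex of $G$-invariant forms. The classes $\theta_1,\theta_2 \in H^{1,0}_{BC}(\hat M)$ are represented by $G$-invariant $\del$- and $\delbar$-closed $(1,0)$-forms; hence the operators $L_{\theta_2}, L_{\overline{\theta_1}}, L_{\overline{\theta_2}}, L_{\theta_1}$, and therefore $\del_{(\theta_1,\theta_2)}$ and $\delbar_{(\theta_1,\theta_2)}$, preserve $G$-invariance, and similarly $\del_{(\mu^\ast\theta_1,\mu^\ast\theta_2)}$, $\delbar_{(\mu^\ast\theta_1,\mu^\ast\theta_2)}$ preserve $H$-invariance on $N$.

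First I would fix a $G$-invariant Hermitian metric on $M$, i.e.\ a Hermitian metric on $\hat M$. Since $G$ acts by biholomorphic isometries, the Hodge-$\overline{\ast}$-operator, the adjoints $\del^\ast_{(\theta_1,\theta_2)}$, $\delbar^\ast_{(\theta_1,\theta_2)}$, and all the Laplacians of the earlier subsections — including the fourth-order operators $\Delta_{BC,\del_{(\theta_1,\theta_2)},\delbar_{(\theta_1,\theta_2)}}$ and $\Delta_{A,\del_{(\theta_1,\theta_2)},\delbar_{(\theta_1,\theta_2)}}$ — map $G$-invariant forms to $G$-invariant forms and remain self-adjoint elliptic. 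Thus Baily's Hodge theory for $V$-manifolds \cite{baily, angella-2} supplies the orthogonal decompositions and harmonic representatives exactly as in Corollary \ref{cor:bc-isom-ker}, now on $\wedge^{\bullet,\bullet}\hat M$. In particular the identity-induced maps between the five twisted cohomologies of $\hat M$ (and of $\hat N$) are defined as before, and the hypothesis that $\hat N$ satisfies the $\del_{(\mu^\ast\theta_1,\mu^\ast\theta_2)}\delbar_{(\mu^\ast\theta_1,\mu^\ast\theta_2)}$-Lemma (resp.\ the Hodge decomposition) means precisely that $\id_{\hat N}^\ast$ is injective (resp.\ an isomorphism).

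Next I would establish the orbifold analogue of Theorem \ref{thm:inj-modification-cohom}. Working with invariant forms and the corresponding space of invariant currents, the pullback $\mu^\ast$ and direct image $\mu_\ast$ commute with $\del_{(\theta_1,\theta_2)}$ and $\delbar_{(\theta_1,\theta_2)}$ and satisfy the projection formula $\mu_\ast(\mu^\ast\alpha \wedge C) = \alpha \wedge \mu_\ast C$. Since a proper modification restricts to a biholomorphism away from an analytic subset, off that locus $\mu$ is a finitely-sheeted covering of some sheeting number $\ell\in\N\setminus\{0\}$, and the pairing computation of the proof of Theorem \ref{thm:inj-modification-cohom} then yields $\mu_\ast T \mu^\ast = \ell\cdot T$ word for word. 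Consequently $\mu^\ast_{BC,(\theta_1,\theta_2)}$, $\mu^\ast_{dR,\phi}$, $\mu^\ast_{\del,(\theta_1,\theta_2)}$, $\mu^\ast_{\delbar,(\theta_1,\theta_2)}$ are injective and their pushforward counterparts are surjective.

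The two conclusions then follow by the diagram chases of Theorem \ref{thm:deldelbar-modification} and Theorem \ref{thm:hodgedec-modification}: injectivity of $\id_{\hat M}^\ast$ is forced by the injectivity of $\id_{\hat N}^\ast$ together with the injectivity of $\mu^\ast_{BC,(\theta_1,\theta_2)}$; and for the $(\theta_1,\theta_2)$-Hodge decomposition one combines the injectivity of the pullbacks with the surjectivity of the pushforwards in the de Rham and Dolbeault commutative squares. I expect the only genuine obstacle to be analytic rather than formal: one must verify that elliptic regularity and the current/direct-image formalism really carry over to the $V$-manifold setting — in particular that $\mu_\ast T \mu^\ast = \ell\cdot T$ holds for orbifold currents. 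Once this is granted, every remaining step is the invariant-form transcription of an argument already established above.
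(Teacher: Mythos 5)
Your proposal is correct and follows exactly the route the paper intends: the paper gives no separate proof of Theorem \ref{thm:mod-orbfd-glob}, stating only that all definitions and results of the previous sections adapt to the orbifold setting by regarding objects on $\hat M = M/G$ as $G$-invariant objects on $M$, with Hodge theory supplied by Baily's theorems for $V$-manifolds. Your transcription --- $G$-invariant metric, invariance and ellipticity of the twisted Laplacians, the orbifold analogue of Theorem \ref{thm:inj-modification-cohom} via $\mu_{*} T \mu^{*} = \ell\cdot T$, and the diagram chases of Theorem \ref{thm:deldelbar-modification} and Theorem \ref{thm:hodgedec-modification} --- is precisely that adaptation, made explicit.
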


Therefore, we have the following corollary. (As usual, by compact complex orbifold $\hat M$ of global-quotient type in class $\mathcal{C}$ of Fujiki, we mean that there exists a proper modification $\mu \colon \hat N \to \hat M$ where $\hat N$ is a compact complex orbifold of global-quotient type admitting K\"ahler metrics.)

\begin{cor}\label{cor:orbld-glob-class-c}
 Let $\hat M$ be a compact complex orbifold of global-quotient type in class $\mathcal{C}$ of Fujiki.
 Take $\theta_{1}, \theta_{2} \in H^{1,0}_{BC}(\hat M)$.
 Then $\hat M$ satisfies the $\del_{(\theta_{1},\theta_{2})}\delbar_{(\theta_{1},\theta_{2})}$-Lemma and admits the $\left(\theta_1,\theta_2\right)$-Hodge decomposition.
\end{cor}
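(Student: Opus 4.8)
The plan is to replicate, \emph{mutatis mutandis}, the proof of Corollary \ref{cor:classC-hodge}, with the manifold-level modification results replaced by their orbifold counterpart. First I would unwind the definition of class $\mathcal{C}$ in the orbifold setting, as recalled just before the statement: by hypothesis there exists a proper modification $\mu \colon \hat N \to \hat M$ between compact complex orbifolds of global-quotient type, with $\hat N$ admitting a K\"ahler metric.

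Next I would invoke the orbifold version of the K\"ahler results. The key observation is that the entire Hodge-theoretic machinery of Section 1 transfers to the global-quotient setting once differential forms on $\hat N = \left. N \middle\slash G \right.$ are interpreted as $G$-invariant forms on the covering manifold $N$, and the Hermitian (here K\"ahler) metric is chosen $G$-invariant. Indeed, the elliptic theory for the twisted Laplacians, the K\"ahler identities for $\del_{(\theta_1,\theta_2)}$ and $\delbar_{(\theta_1,\theta_2)}$, and the resulting equalities of Laplacians in Proposition \ref{prop:kahler-lapl} only use pointwise linear-algebra identities together with the Hodge decomposition of $G$-invariant forms, the latter being guaranteed on orbifolds by \cite{baily, angella-2}. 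Hence Corollary \ref{cor:kahler-deldelbar-hodge-tw} applies to the K\"ahler orbifold $\hat N$, so $\hat N$ satisfies the $\del_{(\mu^\ast\theta_1,\mu^\ast\theta_2)}\delbar_{(\mu^\ast\theta_1,\mu^\ast\theta_2)}$-Lemma and admits the $(\mu^\ast\theta_1,\mu^\ast\theta_2)$-Hodge decomposition.

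Finally I would apply Theorem \ref{thm:mod-orbfd-glob}, the orbifold analogue of Theorem \ref{thm:deldelbar-modification} and Theorem \ref{thm:hodgedec-modification}, to propagate both properties down along $\mu$ from $\hat N$ to $\hat M$. This yields at once that $\hat M$ satisfies the $\del_{(\theta_1,\theta_2)}\delbar_{(\theta_1,\theta_2)}$-Lemma and admits the $(\theta_1,\theta_2)$-Hodge decomposition, which is the desired conclusion.

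The main obstacle is not in this short assembly but in the content absorbed into Theorem \ref{thm:mod-orbfd-glob}, namely the assertion that the argument of Theorem \ref{thm:inj-modification-cohom} survives the passage to orbifolds. The delicate point is to produce a well-defined direct image map $\mu_\ast$ on the complexes of $G$-invariant currents commuting with $\del$ and $\delbar$, and then to re-derive the covering-degree identity $\mu_\ast T \mu^\ast = \ell \cdot T$ over $\hat M \setminus \mu(S)$ using $G$-invariant test forms and an appropriate partition of unity on the orbifold locus. Once this identity is established, the injectivity of $\mu^\ast_{BC,(\theta_1,\theta_2)}$, $\mu^\ast_{dR,\phi}$ and the surjectivity of the corresponding direct image maps follow formally exactly as in the smooth case, so no genuinely new difficulty arises beyond the careful bookkeeping of the quotient structure.
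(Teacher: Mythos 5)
Your proof is correct and follows exactly the route the paper intends: the paper gives no separate proof of Corollary \ref{cor:orbld-glob-class-c}, deriving it (as you do) by combining the orbifold transfer of the K\"ahler results of Corollary \ref{cor:kahler-deldelbar-hodge-tw} --- valid via $G$-invariant forms and orbifold Hodge theory --- with Theorem \ref{thm:mod-orbfd-glob}, mirroring the proof of Corollary \ref{cor:classC-hodge}. Your closing remark correctly identifies that the real content sits inside Theorem \ref{thm:mod-orbfd-glob} (the orbifold analogue of Theorem \ref{thm:inj-modification-cohom}), which the paper likewise treats as a straightforward adaptation.
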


\section{Solvmanifolds}

In this section, we consider solvmanifolds, i.e., compact quotients $\left. \Gamma \middle\backslash G \right.$ where $G$ is a connected simply-connected solvable Lie group and $\Gamma$ is a co-compact discrete subgroup.

\subsection{Cohomology computations for solvmanifolds}

Let $G$ be a connected simply-connected solvable Lie group endowed with a left-invariant complex structure $J$ and admitting a lattice $\Gamma$. Its associated Lie algebra is denoted by $\mathfrak{g}$, and its complexification by $\g_\C:=\g\otimes_\R\C$.
Then we consider the sub-double complex
$$ \left( \wedge^{\bullet,\bullet} \g^{\ast}_\C ,\, \del ,\, \delbar \right) \hookrightarrow \left( A^{\bullet,\bullet}(\solvmfd)_\C ,\, \del ,\, \delbar \right) \;. $$

Take $\theta_{1}, \theta_{2} \in H^{1,0}\left(\wedge^{\bullet,\bullet} \g^\ast_\C; \del,\delbar; \del\delbar \right) \hookrightarrow H^{1,0}\left(A^{\bullet,\bullet} (\solvmfd)_\C; \del,\delbar; \del\delbar \right)$. (For the injectivity, see \cite[Lemma 3.6]{angella-1}, see also \cite[Lemma 9]{console-fino}.)
Then we have the bi-differential $\Z$-graded sub-complex
$$ \left( \wedge^{\bullet} \g^{\ast}_\C,\, \del_{(\theta_{1},\theta_{2})},\, \delbar_{(\theta_{1},\theta_{2})} \right) \hookrightarrow \left( A^{\bullet}(\solvmfd)_\C,\, \del_{(\theta_{1},\theta_{2})},\, \delbar_{(\theta_{1},\theta_{2})} \right) \;. $$

\medskip

We firstly prove the following result, which generalizes \cite[Lemma 9]{console-fino} and \cite[Lemma 3.6]{angella-1} to the case of twisted differentials.
(Consider also the F.~A. Belgun symmetrization trick, \cite[Theorem 7]{belgun}, as a different argument.)

\begin{prop}\label{prop:inj-cohom-solvmfds}
Let $\solvmfd$ be a solvmanifold endowed with a $G$-left-invariant complex structure, and with associated Lie algebra $\g$. Take $\theta_{1}, \theta_{2} \in H^{1,0}\left(\wedge^\bullet \g^\ast_\C; \del,\delbar; \del\delbar \right)$, and $\phi:=\theta_{1}+\overline{\theta_{1}}+\theta_{2}-\overline{\theta_{2}}$.

The maps
\begin{eqnarray*}
 H^{\bullet}\left(\wedge^{\bullet} \g^{\ast}_\C;\, \de_{\phi}\right) &{\to} & H^{\bullet}\left(A^{\bullet}(\solvmfd)_{\C};\, \de_{\phi}\right) \;, \\[5pt]
 H^{\bullet}\left(\wedge^{\bullet} \g^{\ast}_\C;\, \del_{(\theta_{1},\theta_{2})}\right) & {\to} & H^{\bullet}\left(A^{\bullet}(\solvmfd)_{\C};\, \del_{(\theta_{1},\theta_{2})}\right) \;, \\[5pt]
 H^{\bullet}\left(\wedge^{\bullet} \g^{\ast}_\C;\, \delbar_{(\theta_{1},\theta_{2})}\right) & {\to} & H^{\bullet}\left(A^{\bullet}(\solvmfd)_{\C};\, \delbar_{(\theta_{1},\theta_{2})}\right) \;, \\[5pt]
 H^{\bullet}\left(\wedge^{\bullet} \g^{\ast}_\C;\, \del_{(\theta_{1},\theta_{2})},\, \delbar_{(\theta_{1},\theta_{2})};\, \del_{(\theta_{1},\theta_{2})}\delbar_{(\theta_{1},\theta_{2})}\right) &{\to} & H^{\bullet}\left(A^{\bullet}(\solvmfd)_{\C};\, \del_{(\theta_{1},\theta_{2})},\, \delbar_{(\theta_{1},\theta_{2})};\, \del_{(\theta_{1},\theta_{2})}\delbar_{(\theta_{1},\theta_{2})}\right) \;, \\[5pt]
 H^{\bullet}\left(\wedge^{\bullet} \g^{\ast}_\C;\, \del_{(\theta_{1},\theta_{2})}\delbar_{(\theta_{1},\theta_{2})};\, \del_{(\theta_{1},\theta_{2})},\, \delbar_{(\theta_{1},\theta_{2})}\right) & {\to} & H^{\bullet}\left(A^{\bullet}(\solvmfd)_{\C};\, \del_{(\theta_{1},\theta_{2})}\delbar_{(\theta_{1},\theta_{2})};\, \del_{(\theta_{1},\theta_{2})},\, \delbar_{(\theta_{1},\theta_{2})}\right) \; \\[5pt]
\end{eqnarray*}
induced by the inclusion $\left( \wedge^{\bullet} \g^{\ast}_\C,\, \del_{(\theta_{1},\theta_{2})},\, \delbar_{(\theta_{1},\theta_{2})} \right) \hookrightarrow \left( A^{\bullet}(\solvmfd)_\C,\, \del_{(\theta_{1},\theta_{2})},\, \delbar_{(\theta_{1},\theta_{2})} \right)$
are injective.
\end{prop}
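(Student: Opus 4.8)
The plan is to prove all five injectivity statements simultaneously by constructing a single \emph{symmetrization} (averaging) operator that is a chain retraction for each of the twisted operators at once, following F.~A. Belgun \cite[Theorem 7]{belgun} (compare \cite[Lemma 9]{console-fino}, \cite[Lemma 3.6]{angella-1}). Since $G$ admits a lattice, it is unimodular, so $\solvmfd$ carries a $G$-invariant volume form which I normalize to total mass $1$. Using left translations $L_m$ to trivialize the cotangent bundle, I would define
$$ \mu \colon A^{\bullet}(\solvmfd)_\C \to \wedge^{\bullet}\g^\ast_\C \;, \qquad (\mu\alpha)_e(X_1,\dots,X_k) \;:=\; \int_{\solvmfd} \left(L_m^\ast\alpha\right)_e(X_1,\dots,X_k)\, dm \;, $$
for $X_1,\dots,X_k\in\g_\C$, extended left-invariantly. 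The integrand is well defined on $\solvmfd$, and the image consists of left-invariant forms.

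I would then establish two properties. First, $\mu$ restricts to the identity on $\wedge^\bullet\g^\ast_\C$, since for a left-invariant form the integrand is constant in $m$. Second, $\mu$ commutes with $\del$ and with $\delbar$: as $J$ is left-invariant, each $L_m$ is a biholomorphism, so $L_m^\ast$ commutes with $\del$ and $\delbar$, and Belgun's argument (where unimodularity of $G$ is used, so that the invariant volume is bi-invariant and differentiation can be exchanged with the average) yields $\mu\del=\del\mu$ and $\mu\delbar=\delbar\mu$. The new ingredient needed here is the interaction with the twisting: because $\theta_1,\theta_2\in H^{1,0}(\wedge^\bullet\g^\ast_\C;\del,\delbar;\del\delbar)$ are represented by left-invariant $(1,0)$-forms, for any invariant $1$-form $\theta$ one has $L_m^\ast(\theta\wedge\alpha)=\theta\wedge L_m^\ast\alpha$, and pulling the constant $\theta_e$ out of the integral gives $\mu\circ L_\theta = L_\theta\circ\mu$. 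Combined with the commutation with $\del,\delbar$, it follows that $\mu$ commutes with $\del_{(\theta_1,\theta_2)}$ and $\delbar_{(\theta_1,\theta_2)}$, hence also with their product $\del_{(\theta_1,\theta_2)}\delbar_{(\theta_1,\theta_2)}$ and with $\de_\phi$.

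Injectivity is then read off by the retraction argument. For the single-differential cohomologies associated with $\de_\phi$, $\del_{(\theta_1,\theta_2)}$, and $\delbar_{(\theta_1,\theta_2)}$: if an invariant $D$-closed form $\alpha$ is globally $D$-exact, say $\alpha=D\beta$, then $\alpha=\mu\alpha=\mu D\beta=D(\mu\beta)$ with $\mu\beta$ invariant, so $[\alpha]=0$ already in the invariant complex; thus the induced map is injective. The same principle covers the Bott-Chern and Aeppli cases: an invariant class in $\ker\del_{(\theta_1,\theta_2)}\cap\ker\delbar_{(\theta_1,\theta_2)}$ that lies in $\imm\del_{(\theta_1,\theta_2)}\delbar_{(\theta_1,\theta_2)}$ in the full complex is carried by $\mu$ to an invariant $\del_{(\theta_1,\theta_2)}\delbar_{(\theta_1,\theta_2)}$-exact form, and dually an invariant $\del_{(\theta_1,\theta_2)}\delbar_{(\theta_1,\theta_2)}$-closed form lying in $\imm\del_{(\theta_1,\theta_2)}+\imm\delbar_{(\theta_1,\theta_2)}$ is sent to an invariant element of the same image.

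The main obstacle is the second property above, namely that averaging commutes with $\del$ and $\delbar$; this is precisely Belgun's lemma and rests on the unimodularity of $G$. Once this is granted, the passage to the twisted differentials is immediate from the elementary identity $\mu\circ L_\theta=L_\theta\circ\mu$ for invariant $\theta$, and all five injectivity statements follow uniformly from the fact that $\mu$ is a chain retraction for each of the relevant operators.
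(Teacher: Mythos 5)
Your proof is correct, but it follows a genuinely different route from the paper's. The paper's argument is Hodge-theoretic: fix a $G$-left-invariant Hermitian metric $g$; since $g$, $\theta_1$, $\theta_2$ are all left-invariant, each of the relevant Laplacians $\Delta^g_\sharp$ (including the fourth-order Bott--Chern and Aeppli ones) commutes with left translations and hence preserves the finite-dimensional subcomplex $\wedge^\bullet\g^\ast_\C$; Hodge theory then identifies each cohomology, on the invariant complex and on the full complex alike, with the kernel of the corresponding Laplacian, and injectivity follows from the inclusion of kernels $\ker\bigl(\Delta^g_\sharp\lfloor_{\wedge^\bullet\g^\ast_\C}\bigr)\subseteq\ker\Delta^g_\sharp$. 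You instead build the Belgun symmetrization $\mu$ and verify that it is a chain retraction onto the invariant subcomplex commuting with $\del$ and $\delbar$ (Belgun's lemma, resting on unimodularity, which as you note is guaranteed by the existence of a lattice) and with $L_\theta$ for invariant $\theta$; this last identity is the only new ingredient needed for the twisted operators, and it is correctly available because $\theta_1,\theta_2\in H^{1,0}\left(\wedge^\bullet\g^\ast_\C;\del,\delbar;\del\delbar\right)$ are themselves invariant forms. The purely algebraic retraction argument then yields all five injectivity statements uniformly, including the Bott--Chern and Aeppli cases, exactly as you describe. In fact, the paper explicitly anticipates your route in the parenthetical remark preceding the proposition, citing Belgun's symmetrization trick as ``a different argument''. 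As for what each approach buys: the paper's proof leans on the elliptic machinery it has already developed (harmonic representatives for all five cohomologies) and needs no commutation-with-$\de$ lemma; yours avoids elliptic theory entirely, works at the chain level, and makes the uniformity across the five cohomologies transparent, at the price of invoking Belgun's averaging lemma. One cosmetic caveat: writing $L_m^\ast\alpha$ for $m\in\solvmfd$ is an abuse of notation, since left translation by a point of the quotient is not defined; what is well defined, and what your integral actually uses, is the function $m\mapsto\alpha_m\left(X_1\lfloor_m,\dots,X_k\lfloor_m\right)$ for left-invariant vector fields $X_1,\dots,X_k$, which descend to $\solvmfd$.
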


\begin{proof}
Consider each case. Fix $g$ a $G$-left-invariant Hermitian metric on $\solvmfd$.
The metric $g$ and the forms $\theta_1$ and $\theta_2$ being left-invariant, the associated Laplacian operator $\Delta_{\sharp}^g$ satisfies $\left. \Delta_{\sharp}^g \right\lfloor_{\wedge^{\bullet}\g^\ast_\C} \colon \wedge^{\bullet}\g^\ast_\C \to \wedge^{\bullet}\g^\ast_\C$. In particular, Hodge theory applies both to $\left( \wedge^{\bullet} \g^{\ast}_\C,\, \del_{(\theta_{1},\theta_{2})},\, \delbar_{(\theta_{1},\theta_{2})} \right)$ and to $\left( A^{\bullet}(\solvmfd)_\C,\, \del_{(\theta_{1},\theta_{2})},\, \delbar_{(\theta_{1},\theta_{2})} \right)$. Hence we have the commutative diagram
$$
\xymatrix{
 \left. \Delta_{\sharp}^{g} \right\lfloor_{\wedge^{\bullet}\g^\ast_\C} \ar@{^{(}->}[r] \ar[d]_{\simeq} & \Delta_{\sharp}^{g} \ar[d]^{\simeq} \\
 H^{\bullet}_{\sharp} \left(\wedge^{\bullet} \g^{\ast}_\C\right) \ar[r] & H^{\bullet}_{\sharp} \left(A^{\bullet}(\solvmfd)_{\C}\right) \;,
}
$$
where $H^{\bullet}_{\sharp} \left(\wedge^{\bullet} \g^{\ast}_\C\right)$ and $H^{\bullet}_{\sharp} \left(A^{\bullet}(\solvmfd)_{\C}\right)$ denote the corresponding cohomologies. It yields the injectivity of the map $H^{\bullet}_{\sharp} \left(\wedge^{\bullet} \g^{\ast}_\C\right) \to H^{\bullet}_{\sharp} \left(A^{\bullet}(\solvmfd)_{\C}\right)$. Compare \cite[Proposition 2.2]{angella-kasuya-1}.
\end{proof}

\begin{ex}\label{ex:nakamura}
Take $G:=\C\ltimes _{\phi}\C^{2}$ where
$$ \phi(z_{1}) \;:=\; \left(
\begin{array}{cc}
\esp^{\frac{z_{1}+{\bar z_{1}}}{2}}& 0  \\
0&    \esp^{-\frac{z+{\bar z_{1}}}{2}}  
\end{array}
\right) \;\in\; \mathrm{GL}(\C^2) \;. $$
Then for some $a\in \R$  the matrix $\left(
\begin{array}{cc}
\esp^{a}& 0  \\
0&    \esp^{-a}  
\end{array}
\right)$
 is conjugate to an element of $\mathrm{SL}(2;\Z)$.
 Hence, for any $b\in \R\setminus\{0\}$, we have a lattice $\Gamma:=\left(a\, \Z+b\,\sqrt{-1}\,\Z \right)\ltimes \Gamma^{\prime\prime}$ of $G$, where $\Gamma^{\prime\prime} $ is a lattice of $\C^{2}$. The solvmanifold $\solvmfd$ is called {\em completely-solvable Nakamura manifold}, \cite[page 90]{nakamura}; see also, e.g., \cite[\S3]{debartolomeis-tomassini}, \cite[Example 1]{kasuya-mathz}, \cite[Example 2.17]{angella-kasuya-1}. If $b\not\in \pi\,\Z$, then $\solvmfd$ satisfies the $\del\delbar$-Lemma, see \cite[Example 2.17]{angella-kasuya-1} (see also \cite{kasuya-hodge}).

Consider local holomorphic coordinates $(z_{1},z_{2},z_{3})$ for $\C\ltimes _{\phi}\C^{2}$. We have
\[ \wedge ^{\bullet,\bullet}\g^{\ast}_{\C} \;=\;\wedge ^{\bullet,\bullet} \left( \left\langle \de z_{1},\, \esp^{-\frac{z_{1}+{\bar z_{1}}}{2}}\, \de z_{2},\, \esp^{\frac{z_{1}+{\bar z_{1}}}{2}}\, \de z_{3} \right\rangle \otimes \left\langle \de\bar z_{1},\, \esp^{-\frac{z_{1}+{\bar z_{1}}}{2}}\,\de\bar z_{2},\, \esp^{\frac{z_{1}+{\bar z_{1}}}{2}}\,\de\bar z_{3} \right\rangle \right) \;. \]

Take
$$ \theta_{1} \;:=\; \frac{\de z_1}{2} \qquad \text{ and } \qquad \theta_{2} \;:=\; 0 \;, $$
and set
$$ \phi \;:=\; \theta_{1}+\overline{\theta_{1}}+\theta_{2}-\overline{\theta_{2}} \;=\; \frac{\de z_1 + \de \bar z_1}{2} \;. $$
In \cite[\S8]{kasuya-hyper-hodge}, the second author computed
\[ H^{\bullet}(\solvmfd; \de_{\phi}) \;\neq\; \{0\} \]
and
\[ H^{\bullet}(\solvmfd; \delbar_{(\theta_1,\theta_2)}) \;=\; \{0\} \;. \]
Hence $\solvmfd$ does not admit the $(\theta_1,\theta_2)$-Hodge decomposition.

We show now that also the $\del_{(\theta_1,\theta_2)}\delbar_{(\theta_1,\theta_2)}$-Lemma does not hold on $\solvmfd$.
Consider 
\[\frac{1}{2}\,\esp^{\frac{z_{1}+{\bar z_{1}}}{2}}\, \left(\de z_{1}+\de\bar z_{1}\right)\wedge \de\bar z_{3} \;\in\; \wedge^{2} \g^{\ast}_\C \;. \]
We have
\[ 0 \;\neq\; \left[\frac{1}{2}\,\esp^{\frac{z_{1}+{\bar z_{1}}}{2}}\left(\de z_{1}+\de\bar z_{1}\right)\wedge \de\bar z_{3}\right] \;\in\; H^{2}\left(\wedge^{\bullet} \g^{\ast}_\C;\, \del_{(\theta_1,\theta_2)},\,\delbar_{(\theta_1,\theta_2)};\, \del_{(\theta_1,\theta_2)}\delbar_{(\theta_1,\theta_2)} \right) \;. \]
On the other hand, we have
\[ \frac{1}{2}\, \esp^{\frac{z_{1}+{\bar z_{1}}}{2}}\, \left( \de z_{1}+\de\bar z_{1}\right)\wedge \de\bar z_{3} \;=\; \delbar_{(\theta_1,\theta_2)}\left( \esp^{\frac{z_{1}+{\bar z_{1}}}{2}}\,\de \bar z_{3}\right) \;. \]
Therefore the map
\begin{eqnarray*}
 \lefteqn{ H^{2}\left(\wedge^{\bullet} \g^{\ast}_\C;\, \del_{(\theta_1,\theta_2)},\, \delbar_{(\theta_1,\theta_2)};\, \del_{(\theta_1,\theta_2)}\delbar_{(\theta_1,\theta_2)} \right) } \\[5pt]
 && \to H^{2}\left(A^{\bullet} (\solvmfd)_\C;\, \del_{(\theta_1,\theta_2)},\, \delbar_{(\theta_1,\theta_2)};\, \del_{(\theta_1,\theta_2)}\delbar_{(\theta_1,\theta_2)} \right) \\[5pt]
 && \to H^{2}\left(A^{\bullet} (\solvmfd)_\C;\, \del_{(\theta_1,\theta_2)}\delbar_{(\theta_1,\theta_2)};\, \del_{(\theta_1,\theta_2)},\, \delbar_{(\theta_1,\theta_2)} \right)
\end{eqnarray*}
is not injective. Since the first map is injective by Proposition \ref{prop:inj-cohom-solvmfds}, it follows that the natural map $H^{2}\left(A^{\bullet} (\solvmfd)_\C;\, \del_{(\theta_1,\theta_2)},\, \delbar_{(\theta_1,\theta_2)};\, \del_{(\theta_1,\theta_2)}\delbar_{(\theta_1,\theta_2)} \right) \to H^{2}\left(A^{\bullet} (\solvmfd)_\C;\, \del_{(\theta_1,\theta_2)}\delbar_{(\theta_1,\theta_2)};\, \del_{(\theta_1,\theta_2)},\, \delbar_{(\theta_1,\theta_2)} \right)$ induced by the identity is not injective.
It follows that $\solvmfd$ does not satisfy the $\del_{(\theta_1,\theta_2)}\delbar_{(\theta_1,\theta_2)}$-Lemma.
\end{ex}

\subsection{Solvmanifolds and \texorpdfstring{$\del\delbar$-Lemma}{partialoverlinepartial-Lemma} with twisted differentials}

The Weinstein and Thurston problem, concerning the characterization of nilmanifolds admitting K\"ahler structures, was solved by Ch. Benson and C.~S. Gordon, \cite[Theorem A]{benson-gordon-nilmanifolds}. In fact, in \cite[Theorem 1, Corollary]{hasegawa}, K. Hasegawa proved that non-tori nilmanifold are not formal in the sense of Sullivan, and hence do not belong to class $\mathcal{C}$ of Fujiki.

As regards the characterization of solvmanifolds admitting K\"ahler structure, K. Hasegawa proved the following in \cite[Main Theorem]{hasegawa-osaka}. Let $X$ be a compact homogeneous space of solvable Lie group, that is, a compact differentiable manifold on which a connected solvable Lie group acts transitively. Then $X$ admits a Kähler structure if and only if it is a finite quotient of a complex torus which has a structure of a complex torus-bundle over a complex torus. In particular, a completely-solvable solvmanifold has a K\"ahler structure if and only if it is a complex torus.

As regards solvmanifolds in class $\mathcal{C}$ of Fujiki, they are characterized in \cite[Theorem 9]{arapura} by D. Arapura. More precisely, it follows from \cite[Theorem 3, Theorem 9]{arapura} that, for solvmanifolds endowed with complex structures, the properties of admitting K\"ahler metrics and of belonging to class $\mathcal{C}$ of Fujiki are equivalent. The proof is sketched at \cite[page 136]{arapura}, and is based on the fact that a finitely-presented group is a Fujiki group if and only if it is a K\"ahler group, see also \cite[Theorem 1.1]{bharali-biswas-mj} by G. Bharali, I. Biswas, and M. Mj. In fact, their result founds on the Hironaka elimination of indeterminacies, \cite[\S2]{bharali-biswas-mj}. By using the results by the second author in \cite{kasuya-hyper-hodge} and the above results, we can provide a different and more direct proof, of cohomological flavour.

\begin{thm}\label{thm:classC-solvmanifolds}
 Let $\left( M ,\, J \right)$ be a solvmanifold endowed with a complex structure. If $\left( M ,\, J \right)$ belongs to class $\mathcal{C}$ of Fujiki, then it admits a K\"ahler metric.
\end{thm}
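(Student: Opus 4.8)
The plan is to combine the cohomological characterization of class $\mathcal{C}$ manifolds established in Corollary \ref{cor:classC-hodge} with the Hodge-theoretic criterion for K\"ahlerianity of solvmanifolds proven by the second author in \cite[Theorem 1.7]{kasuya-hyper-hodge}. The key point is that belonging to class $\mathcal{C}$ of Fujiki forces a strong, \emph{uniform} Hodge-decomposition property, holding simultaneously for all twisting data, and that for solvmanifolds this uniform property is already known to be equivalent to being K\"ahler.

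First I would observe that $\left( M,\, J \right)$, being a solvmanifold in class $\mathcal{C}$ of Fujiki, is in particular a compact complex manifold in class $\mathcal{C}$ of Fujiki. Hence Corollary \ref{cor:classC-hodge} applies verbatim: for every pair $\theta_{1}, \theta_{2} \in H^{1,0}_{BC}(M)$, the manifold $\left( M,\, J \right)$ admits the $(\theta_{1},\theta_{2})$-Hodge decomposition. Since $\theta_{1}$ and $\theta_{2}$ here are arbitrary, this is precisely the assertion that $\left( M,\, J \right)$ satisfies the hyper-strong-Hodge-decomposition in the sense of \cite{kasuya-hyper-hodge}, namely that the $(\theta_{1},\theta_{2})$-Hodge decomposition holds for all $\theta_{1}, \theta_{2} \in H^{1,0}_{BC}(M)$.

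Next I would invoke \cite[Theorem 1.7]{kasuya-hyper-hodge}, which asserts that a solvmanifold admitting the hyper-strong-Hodge-decomposition admits a K\"ahler metric. Feeding the conclusion of the previous step into this theorem yields at once that $\left( M,\, J \right)$ admits a K\"ahler metric, which is the desired conclusion.

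The main obstacle is not in this assembly, which is formal, but is packaged entirely into the two cited inputs. On our side the genuine work lies in Corollary \ref{cor:classC-hodge}, whose proof rests on the behaviour of the $(\theta_{1},\theta_{2})$-Hodge decomposition under proper modifications (Theorem \ref{thm:hodgedec-modification}) together with the K\"ahler case (Corollary \ref{cor:kahler-deldelbar-hodge-tw}); it is exactly the passage from ``class $\mathcal{C}$'' to the simultaneous validity of the decomposition for \emph{all} $\theta_{1},\theta_{2}$ that must be secured, since a single pair would not suffice to trigger the criterion. The nontrivial converse implication, that hyper-strong-Hodge-decomposition forces K\"ahlerianity for solvmanifolds, is the content of \cite[Theorem 1.7]{kasuya-hyper-hodge} and relies on the explicit left-invariant Lie-algebraic models for the cohomology. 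The noteworthy feature of this route, as emphasized in the Introduction, is that it bypasses the Hironaka elimination of indeterminacies underlying Arapura's original argument.
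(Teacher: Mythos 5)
Your proposal is correct and follows exactly the paper's own argument: apply Corollary \ref{cor:classC-hodge} to obtain the $(\theta_1,\theta_2)$-Hodge decomposition for all $\theta_{1}, \theta_{2} \in H^{1,0}_{BC}(M)$, recognize this as the hyper-strong-Hodge-decomposition of \cite{kasuya-hyper-hodge}, and conclude via \cite[Theorem 1.7]{kasuya-hyper-hodge}. Your observation that the quantification over \emph{all} pairs $(\theta_1,\theta_2)$ is the essential point matches the paper's use of ``Take any $\theta_{1}, \theta_{2}$'' in its proof.
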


\begin{proof}
 Take any $\theta_{1}, \theta_{2} \in H^{1,0}_{BC}(M)$. By Corollary \ref{cor:classC-hodge}, the manifold $\left( M ,\, J \right)$ admits the $(\theta_1,\theta_2)$-Hodge decomposition. In \cite{kasuya-hyper-hodge}, the property of satisfying the Hodge-decomposition with respect to any $\theta_{1}, \theta_{2} \in H^{1,0}_{BC}(M)$ is called hyper-strong-Hodge-decomposition. The second author proved in \cite[Theorem 1.7]{kasuya-hyper-hodge} that a solvmanifold admitting hyper-strong-Hodge-decomposition admits a K\"ahler metric.
\end{proof}


\begin{thebibliography}{10}
\bibitem{aeppli}
A. Aeppli, On the cohomology structure of Stein manifolds, {\em Proc. Conf. Complex Analysis (Minneapolis, Minn., 1964)}, Springer, Berlin, 1965, pp. 58--70.

\bibitem{angella-1}
D. Angella, The cohomologies of the Iwasawa manifold and of its small deformations, {\em J. Geom. Anal.} \textbf{23} (2013), no.~3, 1355--1378.

\bibitem{angella-2}
D. Angella, Cohomologies of certain orbifolds, {\em J. Geom. Phys.} \textbf{171} (2013), 117--126.

\bibitem{angella-kasuya-1}
D. Angella, H. Kasuya, Bott-Chern cohomology of solvmanifolds, \texttt{arXiv:1212.5708v3 [math.DG]}.

\bibitem{angella-tomassini-3}
D. Angella, A. Tomassini, On the $\partial\overline\partial$-Lemma and Bott-Chern cohomology, {\em Invent. Math.} \textbf{192} (2013), no.~1, 71--81.

\bibitem{angella-tomassini-5}
D. Angella, A. Tomassini, Inequalities {\itshape à la} Fr\"olicher and cohomological decompositions, to appear in {\em J. Noncommut. Geom.}.

\bibitem{arapura}
D. Arapura, Kähler solvmanifolds, {\em Int. Math. Res. Not.} \textbf{2004} (2004), no.~3, 131--137.

\bibitem{baily}
W.~L. Baily, The decomposition theorem for $V$-manifolds, {\em Amer. J. Math.} {\bfseries 78} (1956), no.~4, 862--888.

\bibitem{baily-2}
W.~L. Baily, On the quotient of an analytic manifold by a group of analytic homeomorphisms, {\em Proc. Nat. Acad. Sci. U. S. A.} {\bfseries 40} (1954), no.~9, 804--808.

\bibitem{bharali-biswas-mj}
G. Bharali, I. Biswas, M. Mj, The Fujiki class and positive degree maps, \texttt{arXiv:1312.5655v1 [math.GT]}.

\bibitem{belgun}
F.~A. Belgun, On the metric structure of non-K\"ahler complex surfaces, {\em Math. Ann.} \textbf{317} (2000), no.~1, 1--40.

\bibitem{benson-gordon-nilmanifolds}
Ch. Benson, C.~S. Gordon, K\"ahler and symplectic structures on nilmanifolds, {\em Topology} \textbf{27} (1988), no.~4, 513--518.

\bibitem{bott-chern}
R. Bott, S.~S. Chern, Hermitian vector bundles and the equidistribution of the zeroes of their holomorphic sections, {\em Acta Math.} \textbf{114} (1965), no.~1, 71--112.

\bibitem{console-fino}
S. Console, A. Fino, Dolbeault cohomology of compact nilmanifolds, {\em Transform. Groups} \textbf{6} (2001), no.~2, 111--124.

\bibitem{debartolomeis-tomassini}
P. de Bartolomeis, A. Tomassini, On solvable generalized Calabi-Yau manifolds, {\em Ann. Inst. Fourier (Grenoble)} \textbf{56} (2006), no.~5, 1281--1296.

\bibitem{deligne-griffiths-morgan-sullivan}
P. Deligne, Ph. Griffiths, J. Morgan, D.~P. Sullivan, Real homotopy theory of K\"ahler manifolds, {\em Invent. Math.} \textbf{29} (1975), no.~3, 245--274.

\bibitem{dragomir-ornea}
S. Dragomir, L. Ornea, {\em Locally conformal K\"ahler geometry}, Progress in Mathematics, \textbf{155}, Birkhäuser Boston, Inc., Boston, MA, 1998.

\bibitem{frolicher}
A. Fr\"olicher, Relations between the cohomology groups of Dolbeault and topological invariants, {\em Proc. Nat. Acad. Sci. U.S.A.} \textbf{41} (1955), 641--644.

\bibitem{fujiki}
A. Fujiki, On automorphism groups of compact K\"ahler manifolds, {\em Invent. Math.} \textbf{44} (1978), no.~3, 225--258.

\bibitem{hasegawa}
K. Hasegawa, Minimal models of nilmanifolds, {\em Proc. Amer. Math. Soc.} \textbf{106} (1989), no.~1, 65--71.

\bibitem{hasegawa-osaka}
K. Hasegawa, A note on compact solvmanifolds with K\"ahler structures, {\em Osaka J. Math.} \textbf{43} (2006), no.~1, 131--135.

\bibitem{kasuya-mathz}
H. Kasuya, Techniques of computations of Dolbeault cohomology of solvmanifolds, {\em Math. Z.} \textbf{273} (2013), no.~1-2, 437--447.

\bibitem{kasuya-hodge}
H. Kasuya, Hodge symmetry and decomposition on non-Kähler solvmanifolds, {\em J. Geom. Phys.} \textbf{76} (2014), 61--65.


\bibitem{kasuya-hyper-hodge}
H. Kasuya, Flat bundles and Hyper-Hodge decomposition on solvmanifolds, \texttt{arXiv:1309.4264v1 [math.DG]}.

\bibitem{kodaira}
K. Kodaira, {\em Complex manifolds and deformation of complex structures}, Translated from the 1981 Japanese original by Kazuo Akao, Reprint of the 1986 English edition, Classics in Mathematics, Springer-Verlag, Berlin, 2005.

\bibitem{kodaira-spencer-3}
K. Kodaira, D.~C. Spencer, On deformations of complex analytic structures. III. Stability theorems for complex structures, {\em Annals of Math. (2)} \textbf{71} (1960), no.~1, 43--76.

\bibitem{moishezon}
B.~G. Mo\v\i\v sezon, On $n$-dimensional compact complex manifolds having $n$ algebraically independent meromorphic functions. I, II, III, {\em Izv. Akad. Nauk SSSR Ser. Mat.} \textbf{30} (1966), no.~1--2--3, 133--174, 345--386, 621--656. Translation in {\em Am. Math. Soc., Transl., II. Ser.} \textbf{63} (1967), 51--177.

\bibitem{nakamura}
I. Nakamura, Complex parallelisable manifolds and their small deformations, {\em J. Differ. Geom.} \textbf{10} (1975), no.~1, 85--112.

\bibitem{satake}
I. Satake, On a generalization of the notion of manifold, {\em Proc. Nat. Acad. Sci. U.S.A.} {\bfseries 42} (1956), no.~6, 359--363.

\bibitem{schweitzer}
M. Schweitzer, Autour de la cohomologie de Bott-Chern, \texttt{arXiv:0709.3528v1 [math.AG]}.

\bibitem{Sim}
C.~T. Simpson, Higgs bundles and local systems, {\em Inst. Hautes \'{E}tudes Sci. Publ. Math. No.} \textbf{75} (1992), 5--95.

\bibitem{Voi}
C. Voisin, {\em Hodge theory and complex algebraic geometry. I}, Cambridge Studies in Advanced Mathematics, {\bf 76}, Cambridge University Press, Cambridge, 2002.

\bibitem{wells}
R.~O. Wells, Jr., Comparison of de Rham and Dolbeault cohomology for proper surjective mappings, {\em Pacific J. Math.} \textbf{53} (1974), no.~1, 281--300.

\bibitem{wells-book}
R O., Wells, Jr., {\em Differential analysis on complex manifolds}, Third edition, With a new appendix by Oscar Garcia-Prada, Graduate Texts in Mathematics, \textbf{65}, Springer, New York, 2008.

\end{thebibliography}
\end{document}